\numberwithin{equation}{section}
\newtheorem{thm}{Theorem}[section]
\newtheorem{lemma}[thm]{Lemma}
\newtheorem{cor}[thm]{Corollary}
\theoremstyle{definition}
\newtheorem{defn}[thm]{Definition}
\newtheorem{remark}[thm]{Remark}
\newtheorem*{remark*}{Remark}
\newtheorem{prop}[thm]{Proposition}
\def\Xint#1{\mathchoice
	{\XXint\displaystyle\textstyle{#1}}%
	{\XXint\textstyle\scriptstyle{#1}}%
	{\XXint\scriptstyle\scriptscriptstyle{#1}}%
	{\XXint\scriptscriptstyle\scriptscriptstyle{#1}}%
	\!\int}
\def\XXint#1#2#3{{\setbox0=\hbox{$#1{#2#3}{\int}$}
		\vcenter{\hbox{$#2#3$}}\kern-.5\wd0}}
\def\dashint{\Xint-}
\newcommand{\R}{\ensuremath{\mathbb{R}}}
\newcommand{\Q}{\ensuremath{\mathbb{Q}}}
\newcommand{\LIP}{\ensuremath{\mathrm{LIP}}}
\newcommand{\defeq}{\mathrel{\mathop:}=}
\newcommand{\eps}{\varepsilon}
\newcommand{\ud}{\mathrm{d}}
\newcommand{\inv}{^{-1}}
\newcommand{\spt}{\mathrm{spt}}
\newcommand{\id}{\mathrm{id}}
\newcommand{\im}{\mathrm{im}}
	\title[Generalized products and Lorentzian length spaces]{Generalized products and Lorentzian length spaces}
\author{Elefterios Soultanis}
\address{Department of Mathematics and Statistics\\
	University of Jyv\"askyl\"a\\
	P.O. Box 35\\
	FI-40014 University of Jyv\"askyl\"a\\
	Finland}
\email{\tt elefterios.soultanis@gmail.com}
\begin{document}

\begin{abstract}
	We construct a Lorentzian length space with an orthogonal splitting on a product $I\times X$ of an interval and a metric space, and use this framework to consider the relationship between metric and causal geometry, as well as synthetic time-like Ricci curvature bounds. 
	
	The generalized Lorentzian product naturally has a \emph{Lorentzian length structure} but can fail the push-up condition in general. We recover the push-up property under a log-Lipschitz condition on the time variable and establish sufficient conditions for global hyperbolicity. Moreover we formulate time-like Ricci curvature bounds without push-up and regularity assumptions, and obtain a partial rigidity of the splitting under a strong energy condition. 
\end{abstract}

\maketitle

\section{Introduction}

\subsection{Background and outline of results}

A (smooth) globally hyperbolic metric $g$ on a spacetime $M$ admits an orthogonal splitting 
\begin{align}\label{eq:orth_split}
	g=-h^2\ud t^2+g_t,\quad M=(a,b)\times S
\end{align}
where $g_t$ is a ``time dependent'' Riemannian metric on the spacelike hypersurface $\{t\}\times S\subset M$ and $h\colon M\to\R$ is a positive (smooth) function \cite{ber-san05}. Galloway's splitting theorem \cite{gal89} further improves \eqref{eq:orth_split} to the isometric identification  $(M,g)\simeq(\R\times S,-\ud t^2+g_S)$ (with $g_S$ constant in $t$) assuming the \emph{strong energy condition} ${\rm Ric}(X,X)\ge 0$ for time-like vectors $X$, and that $M$ contains a time-like geodesic line. Galloway's result can be interpreted as \emph{rigidity} imposed on \eqref{eq:orth_split} by the strong energy condition and the existence of complete time-like lines.

For continuous globally hyperbolic metrics only a \emph{topological} splitting $M\approx(a,b)\times S$ is available, as a result of the stability of global hyperbolicity \cite{sae16}, see also \cite{ger70}. The continuous case is an important step towards a theory of non-smooth Lorentzian geometry, motivated e.g. by the study of singularity theorems  and by Penrose's cosmic censorship conjecture. In this direction, the so called \emph{Lorentzian pre-length spaces}, introduced by Kunzinger--S\"amann \cite{kun-sam18}, have become a popular framework. They consist of a causal set $Y=(Y,\le,\ll)$ satisfying the \emph{push-up property}, and a time separation function $\tau:Y\times Y\to [0,\infty]$ satisfying a \emph{reverse triangle inequality} (cf. Definition \ref{def:lor-pre-length}). Lorentzian pre-length spaces provide an axiomatization of the causal geometry of smooth spacetimes in the spirit of metric geometry.

In this setting, synthetic curvature conditions have been proposed in analogy with the metric theories of sectional (Alexandrov) and Ricci (RCD) curvature bounds. For \emph{sectional curvature bounds} \cite{kun-sam18,agks21, BORS23}, an analogue of Galloway's splitting theorem has been obtained in \cite{BORS23}. For \emph{Ricci curvature bounds}, introduced in \cite{mccann20,cav-mon22} using optimal transport, general singularity theorems were proven in \cite{cav-mon23} but splitting theorems are not yet available. This is due in part to the lack of a notion of infinitesimal Hilbertianity. In the metric theory this notion arises from the ``analytic'' description of synthetic Ricci bounds via the Bochner inequality  \cite{AGS14} and is known to be a necessary condition for the validity of splitting theorems \cite{gig13,gig14}. Analytic non-smooth Lorentzian geometry is based on \emph{lower gradients} (to appear in the forthcoming work \cite{gigli-et-al}), but its connection to optimal transport is yet to be explored.

In general, despite the keen interest in Lorentzian length spaces and metric techniques in recent years \cite{bur-gar21,kun-ro22,mc-sae22,hau-sau22,be-sae23}, a number of questions remain open, e.g. (i) examples of Lorentzian length spaces, (ii) the relationship between metric and causal geometry, and (iii) splitting theorems under Ricci bounds. Indeed, not many examples of Lorentzian length spaces are known beyond those arising from smooth (or $C^{1,1}$) spacetimes, while the connection of metric products and causal geometry has only been considered in the work of Alexander et. al. \cite{agks21} on Lorentzian warped products -- a special case of \eqref{eq:orth_split} -- for sectional curvature bounds.

\medskip\noindent In this article we construct a generalization of \eqref{eq:orth_split} on product spaces where the hypersurface $S$ is a metric space. Indeed, given a metric space $X$, an interval $I\subset \R$, a continuous function $h:I\times X\to (0,\infty)$, and a family $\mathcal F=\{d_s\}_{s\in I}$ of compatible metrics on $X$, we construct the \emph{generalized Lorentzian product}, denoted $hI\times_\mathcal FX$. 
For it, we establish (1) the push-up principle (Theorem \ref{thm:push-up}) under a log-Lipschitz condition, (2) sufficient conditions for global hyperbolicity (Theorem \ref{thm:glob_hyp}), and (3) partial rigidity of splittings under strong energy conditions (Corollary \ref{cor:partial-rigidity}).

The construction of the generalized Lorentzian product gives new examples of Lorentzian length spaces, cf. Corollary \ref{cor:lor-len-sp} and Theorem \ref{thm:regular-loc-lor-len-sp}. It moreover generalizes the Lorentzian warped product considered in \cite{agks21} and is compatible with \eqref{eq:orth_split} in the setting of manifolds: if $X$ is a smooth manifold and $\mathcal F$ is given by continuous Riemannian metrics, then $hI\times_\mathcal FX$ coincides with the Lorentzian length structure induced by \eqref{eq:orth_split}, cf. Corollary \ref{cor:compatible}.

In contrast to  the warped products of \cite{agks21} (which correspond to $h=1$ and $d_s=f(s)d$ for a continuous function $f:I\to (0,\infty)$ and a fixed metric $d$ on $X$), pathological causal phenomena may occur in the generality of our setting (see e.g. \cite{chru-gra12,terileo}), violating the push-up property required in the definition of Lorentzian length spaces. In Lorentzian manifolds, causal theory is known to break down for metrics below Lipschitz regularity \cite{chru-gra12,grant-kun-sae20}. We identify a log-Lipschitz condition \eqref{eq:lip} on $\mathcal F$ -- inspired by the work of Kopfer--Sturm \cite{kopf-sturm18,kopf-sturm21,kopf19} -- ensuring that the generalized Lorentzian product $hI\times_{\mathcal{F}}X$ is a Lorentzian length space (cf. Corollary \ref{cor:lor-len-sp}). Moreover we show the regularity of $hI\times_\mathcal FX$ for conformal metrics in Theorem \ref{thm:regular-loc-lor-len-sp}, generalizing the corresponding fact for Lipschitz continuous Lorentzian metrics \cite{lange-lytchak-sam21,graf-ling18} in the special case of conformal metrics. Applying synthetic time-like curvature bounds to the generalized Lorentzian product, we obtain the following rigidity statement (Corollary \ref{cor:partial-rigidity}): if $\R\times_\mathcal FX$ equipped with a measure $\bm m=m_s\otimes\ud s$ satisfies the strong energy condition, then the sliced measures $m_s$ are independent of $s$.

\subsection{Generalized Lorentzian product}\label{sec:gen_lor_prod}

Throughout this paper $X$ is a locally compact topological space and $I\subset \R$ an (open) interval. Suppose $h:I\times X\to (0,\infty)$ is continuous, and $\mathcal F=\{d_s\}_{s\in I}$ is a family of metrics on $X$ whose induced length metrics generate the topology of $X$.\footnote{This is the case for example when each metric in $\mathcal F$ is a length metric or a quasiconvex metric.} We make the following local continuity assumption: for each compact $J\subset I$ and $K\subset X$, there exists $r=r_{J,K}>0$ and a modulus of continuity $\omega=\omega_{J, K}:[0,\infty)\to [0,\infty)$ such that 

\begin{equation}\label{eq:cont}
\Big|\log\frac{d_s(x,y)}{d_{s'}(x,y)}\Big|\le \omega(|s-s'|),\quad \textrm{ for }s,s'\in J,\ x,y\in K \textrm{ with } \inf_{s\in J}d_s(x,y)<r.
\end{equation}

In particular all the metrics in $\mathcal F$ are locally bi-Lipschitz equivalent, and the absolute continuity of a curve $\beta:[a,b]\to X$ is independent of the chosen metric in $\mathcal F$. The \emph{generalized metric speed} of an absolutely continuous curve $\beta:[a,b]\to X$  is a map $v_\beta:I\times [a,b]\to [0,\infty]$ with the following properties (cf. Lemma \ref{lem:lip_cont}):
\begin{itemize}
	\item for each $t\in [a,b]$, $s\mapsto v_\beta(s,t)$ is continuous in $I$, and 
	\item for each $s\in I$ we have that $v_\beta(s,t)=|\beta_t'|_{d_s}$ a.e. $t$.\footnote{Here $|\beta_t'|_{d_s}$ denotes the metric speed of $\beta$ with respect to the metric $d_s$.}
\end{itemize}
The \emph{generalized length element} of an absolutely continuous curve $\gamma=(\alpha,\beta):[a,b]\to I\times X$ is
\begin{align}\label{eq:lorentzian-length-element}
	|\tau_\gamma'|^2(t)\defeq h(\gamma_t)^2\alpha'(t)^2-v_\beta(\alpha_t,t)^2,
\end{align}
and we say that $\gamma$ is (a) \emph{causal}, if $|\tau_\gamma'|^2(t)\ge 0$ a.e. $t\in [a,b]$, and (b) \emph{timelike}, if $|\tau_\gamma'|^2(t)> 0$ a.e. $t\in [a,b]$, cf. Definition \ref{def:causal_and_timelike}. A standard construction detailed in Section \ref{sec:causal_structure} gives rise to a causal set $(I\times X,\le,\ll)$ and time separation function $\tau$ on $I\times X$. In the terminology of \cite[Appendix A]{agks21}, the collection $(I\times X,\le,\ll,\tau)$ is a \emph{Lorentzian length structure}, which we denote by $hI\times_\mathcal{F}X$ and call the \emph{generalized Lorentzian product}. In Corollary \ref{cor:length-element-compatible} we show that when $X$ is a smooth manifold and $\mathcal F$ given by continuous Riemannian metrics,  the length elements given by \eqref{eq:orth_split} and \eqref{eq:lorentzian-length-element} coincide. As a direct consequence we obtain the compatibility of $hI\times_\mathcal FX$ and the Lorentzian length structure induced by the Lorentzian product metric.

\begin{cor}\label{cor:compatible}
Let $X$ be a smooth manifold, $h:I\times X\to (0,\infty)$ continuous, and suppose $\{g_s\}_{s\in I}$ is a family of Riemannian metrics on $X$ so that the local coordinate representations $(g_s)_{ij}(p)$ are continuous in $(s,p)$. Let $\mathcal F=\{d_{g_s}\}_{s\in I}$, where $d_{g_s}$ is the length metric associated to $g_s$ for each $s\in I$. Then $hI\times_\mathcal FX$ is the Lorentzian length structure induced by $g\defeq -h^2\ud s^2+g_s$.
\end{cor}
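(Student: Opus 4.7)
The plan is to reduce the statement to Corollary~\ref{cor:length-element-compatible}, which asserts the pointwise equality of the generalized length element \eqref{eq:lorentzian-length-element} with the length element of the smooth Lorentzian metric $g=-h^2\ud s^2+g_s$ along absolutely continuous curves. Once the length elements agree along every absolutely continuous curve, the induced causal and timelike relations $\le,\ll$ and the time separation function $\tau$ must coincide as well, because by Section~\ref{sec:gen_lor_prod} these are built entirely from sign-testing and integration of the length element along such curves. The remaining work is therefore (a) to verify hypothesis \eqref{eq:cont} for the family $\mathcal F=\{d_{g_s}\}_{s\in I}$, so that $hI\times_\mathcal FX$ is actually well defined, and (b) to invoke Corollary~\ref{cor:length-element-compatible}.

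For (a), fix compact $J\subset I$ and $K\subset X$ and cover $K$ by finitely many coordinate charts $(U_\alpha,\varphi_\alpha)$ with compact closure $\overline{U_\alpha}$. By hypothesis the coefficient functions $(s,p)\mapsto (g_s)_{ij}(p)$ are continuous, hence uniformly continuous and uniformly bounded above and below on each compact set $J\times \overline{U_\alpha}$. The uniform boundedness first yields a preliminary bi-Lipschitz comparison $c\,d_{s'}\le d_s\le C\,d_{s'}$ for nearby points lying in a single chart, which guarantees that whenever $\inf_{s\in J}d_s(x,y)<r$ with $r$ sufficiently small (relative to the Lebesgue number of the chart cover with respect to any fixed $d_{s_0}$), then every $d_{s'}(x,y)$ is comparably small and near-minimizing curves for every $s'\in J$ remain in a common $\overline{U_\alpha}$. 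Then the refined matrix comparison $(1-\eta)g_{s'}(p)\le g_s(p)\le (1+\eta)g_{s'}(p)$, valid for $|s-s'|<\delta(\eta)$ by uniform continuity, integrates along any such near-minimizer to give $|\log(d_s(x,y)/d_{s'}(x,y))|\le \omega(|s-s'|)$, where $\omega$ is determined by the uniform modulus of continuity of the matrix coefficients on $J\times \bigcup_\alpha\overline{U_\alpha}$.

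With \eqref{eq:cont} verified, $hI\times_\mathcal FX$ is defined and Corollary~\ref{cor:length-element-compatible} applies, giving for every absolutely continuous curve $\gamma=(\alpha,\beta)\co [a,b]\to I\times X$
\[
h(\gamma_t)^2\alpha'(t)^2-v_\beta(\alpha_t,t)^2 \;=\; h(\gamma_t)^2\alpha'(t)^2-g_{\alpha_t}(\beta'(t),\beta'(t))
\]
for a.e.\ $t$. Hence the generalized and classical length elements agree on all admissible curves, so the causal/timelike classification, the relations $\le,\ll$, and the time separation $\tau$ of $hI\times_\mathcal FX$ coincide with those of the Lorentzian length structure induced by $g=-h^2\ud s^2+g_s$.

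The main obstacle is the bookkeeping in step (a): one must confirm that smallness of $\inf_{s\in J}d_s(x,y)$ transfers to smallness of $d_{s'}(x,y)$ for every $s'\in J$ simultaneously, so that the chart-based matrix comparison can be applied to a single near-minimizer uniformly in $s'$. This is handled by the bootstrap outlined above, first using uniform matrix bounds to get crude bi-Lipschitz comparability on a chart, and then refining to a modulus of continuity using uniform continuity of the coefficient matrices on the compact set $J\times \overline{U_\alpha}$.
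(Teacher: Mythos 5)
Your proposal is correct and follows essentially the paper's own route: verify \eqref{eq:cont} for $\mathcal F=\{d_{g_s}\}$ by comparing the metrics on compact sets via the uniform continuity of the coefficient matrices (the paper's Lemma \ref{lem:cont-cond-satisfied} does this with the auxiliary Riemannian product metric $\ud s^2+g_s$ rather than coordinate charts, a cosmetic difference), and then invoke the a.e.\ coincidence of the length elements (Corollary \ref{cor:length-element-compatible}) to conclude that the causal and timelike curve classes and hence $\le$, $\ll$ and $\tau$ agree with those induced by $g=-h^2\ud s^2+g_s$. This is the same reduction the paper uses, so no substantive gap to report.
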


\subsection{Lorentzian length spaces and global hyperbolicity}
Let $h$ and $\mathcal F$ be as in Section \ref{sec:gen_lor_prod}. Under these hypotheses, the generalized Lorentzian product $hI\times_\mathcal FX$ might have causal bubbles (see \cite{terileo}) and thus fail to be a Lorentzian length space, as a result of failing the push-up property. Recall that a causal set $(Y,\le,\ll)$ satisfies the push-up property if \emph{$x\ll z$ whenever $x\le y$ and $y\ll z$ or $x\ll y$ and $y\le z$}. 

To prevent this failure, we impose a log-Lipschitz condition in the spirit of \cite{kopf-sturm18,kopf-sturm21,kopf19}.
\begin{defn}\label{def:loc-log-lip}
We say that $\mathcal F$ satisfies a local log-Lipschitz condition if, for every compact $J\subset I$, $K\subset X$, there exists $C=C_{J,K},r=r_{J,K}>0$ such that 
\begin{align}\label{eq:lip}
	\Big|\log\frac{d_s(x,y)}{d_{s'}(x,y)}\Big|\le C|s-s'|,\quad \textrm{ for }s,s'\in J,\ x,y\in K \textrm{ with } \inf_{s\in J}d_s(x,y)<r.
\end{align}
\end{defn}
In other words, we require that the moduli of continuity $\omega$ in \eqref{eq:cont} are linear, corresponding to a Lipschitz-type control on the time-variable. We remark here that \eqref{eq:lip} in Definition \ref{def:loc-log-lip} is inspired by the work of Kopfer--Sturm on Super Ricci flows on metric spaces \cite{kopf19,kopf-sturm18,kopf-sturm21}, where a similar condition appears. Whereas they focus on the evolution and gradient flows of ``variable metric spaces,'' we instead consider the Lorentzian geometry of the product.

\begin{thm}\label{thm:push-up}
	Suppose $\mathcal F$ satisfies \eqref{eq:lip} and, for every compact $J\subset I$, $K\subset X$ there exists $C'>0$ such that
	\begin{equation}\label{eq:lip-h-time-var}
|h(s,x)-h(s',x)|\le C'|s-s'|,\quad s,s'\in J,\ x\in K.
	\end{equation}
Then $hI\times_{\mathcal F} X$ satisfies the push-up property.
\end{thm}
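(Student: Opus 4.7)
It suffices to prove the forward push-up: given causal $\sigma=(\alpha,\beta)\colon[0,1]\to I\times X$ from $x$ to $y$ and timelike $\gamma=(a,b)\colon[0,1]\to I\times X$ from $y$ to $z$, we construct a timelike curve from $x$ to $z$. The other form of push-up follows from a time-symmetric variant of the same argument. The plan is to produce the required curve as a three-piece concatenation $\sigma_\delta\ast\eta\ast\gamma|_{[\mu,1]}$ consisting of a timelike perturbation $\sigma_\delta$ of $\sigma$ ending at a point $y_\delta$ strictly in the future of $y$, a short bridge $\eta$ from $y_\delta$ to some $\gamma(\mu)$, and the remaining tail of $\gamma$; concatenation preserves timelikeness because the condition $|\tau'|^2>0$ a.e.\ is pointwise and additive over subintervals.

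The core step is the construction of $\sigma_\delta$. After discarding constant subsegments of $\sigma$ (on which the causal inequality forces both $\alpha$ and $\beta$ to be constant) we may assume $\alpha'>0$ a.e. Let $C,C'$ denote the log-Lipschitz constants from \eqref{eq:lip} and \eqref{eq:lip-h-time-var} on a compact neighborhood of the image of $\sigma$, and set
\[
M(t):=\int_0^t\!\Bigl(\tfrac{C'}{h(\sigma(s))}+C\Bigr)\alpha'(s)\,\ud s,\qquad \sigma_\delta(t):=\bigl(\alpha(t)+\delta(e^{M(t)}-1),\,\beta(t)\bigr).
\]
The log-Lipschitz hypotheses yield $v_\beta(\alpha+\delta(e^M-1),t)^2\le e^{2C\delta(e^M-1)}v_\beta(\alpha,t)^2$ and $h(\sigma_\delta)\ge h(\sigma)-C'\delta(e^M-1)$. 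Expanding $|\tau_{\sigma_\delta}'|^2$ to first order in $\delta$ and using the causal inequality $v_\beta(\alpha,t)^2\le h(\sigma)^2\alpha'^2$, the apparently dangerous cross-terms cancel via the identity $h\cdot(e^M-1)'=(C'+Ch)\alpha'(e^M-1)+(C'+Ch)\alpha'$, leaving
\[
|\tau_{\sigma_\delta}'|^2\ \ge\ 2\delta\,h(\sigma)\bigl(C'+Ch(\sigma)\bigr)\alpha'^2+O(\delta^2),
\]
strictly positive a.e.\ for $\delta$ sufficiently small. Thus $\sigma_\delta$ is timelike from $x$ to $y_\delta:=(y_t+\delta(e^{M(1)}-1),\,y_X)$.

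For the bridge, integrating $\gamma$'s timelike inequality and using the log-Lipschitz control to compare spatial speeds across time slices gives $h(y)(a(\mu)-y_t)>d_{y_t}(y_X,b(\mu))$ for all sufficiently small $\mu>0$. Fix such $\mu$ and then $\delta$ small enough that this inequality persists after subtracting $\delta_*:=\delta(e^{M(1)}-1)$ from $(a(\mu)-y_t)$. Since each metric in $\mathcal F$ is length or quasiconvex, choose $\beta_\eta\colon[0,1]\to X$ to be an almost-$d_{y_t}$-geodesic from $y_X$ to $b(\mu)$ of constant metric speed $L$ satisfying $d_{y_t}(y_X,b(\mu))\le L<h(y)(a(\mu)-y_t-\delta_*)$, and set $\alpha_\eta(r):=y_t+\delta_*+r(a(\mu)-y_t-\delta_*)$, $\eta:=(\alpha_\eta,\beta_\eta)$. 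The log-Lipschitz estimates then reduce the timelike inequality $h(\eta)\alpha_\eta'>v_{\beta_\eta}(\alpha_\eta,r)$ to $h(y)(a(\mu)-y_t-\delta_*)>L$, which holds by construction. Concatenating $\sigma_\delta\ast\eta\ast\gamma|_{[\mu,1]}$ gives the desired timelike curve from $x$ to $z$.

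The principal obstacle lies in the choice of the exponential ansatz $\phi=e^M-1$ in the perturbation step. A linear choice $\phi(t)=t$ fails because the first-order $\delta$-expansion of $|\tau_{\sigma_\delta}'|^2$ produces the pointwise ODE $h\phi'=(C'+Ch)\alpha'\phi$ whose exponential solutions are incompatible with the boundary condition $\phi(0)=0$ required to preserve the past endpoint $x$. The ansatz $\phi=e^M-1$ solves the inhomogeneous variant $h\phi'=(C'+Ch)\alpha'(\phi+1)$, whose extra constant term on the right supplies the strictly positive leading-order contribution, while the subtracted $1$ enforces $\phi(0)=0$; a secondary technical point is the bridge construction, which relies on the length structure of the $d_s$'s and the continuous dependence $v_b(s,t)$ on $s$ in a suitable averaged sense.
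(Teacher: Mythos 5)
Your first step---the exponential ``integrating factor'' perturbation $\sigma_\delta=(\alpha+\delta(e^{M}-1),\beta)$ of the causal piece---is essentially correct: using \eqref{eq:lip}, \eqref{eq:lip-h-time-var} and the causal inequality, the first-order term is $2\delta h(C'+Ch)\alpha'^2>0$ a.e., the error terms are pointwise bounded by a constant times $\delta^2\alpha'(t)^2$ on a compact neighbourhood of the image, and so $\sigma_\delta$ is timelike for small $\delta$. But this perturbation necessarily moves the \emph{future} endpoint up to $y_\delta=(y_t+\delta_*,y_X)$, and the whole difficulty of push-up is then concentrated in getting from $y_\delta$ to $z$. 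Your bridge step rests on the claim that $h(y)\,(a(\mu)-y_t)>d_{y_t}(y_X,b(\mu))$ for all (or even some) small $\mu>0$, and this does \emph{not} follow from $\gamma$ being timelike: timelikeness is only an a.e.\ strict inequality with no uniform margin, and integrating it gives $d_{y_t}(y_X,b(\mu))\le(1+o(1))\,h(y)(a(\mu)-y_t)$, i.e.\ an inequality in the wrong direction up to errors that may dominate the slack. Concretely, take $X=\R$ with $d_s$ the Euclidean metric for all $s$ (so \eqref{eq:lip} holds trivially), $h(s,x)=1+\sqrt{|x|}$ (so \eqref{eq:lip-h-time-var} holds trivially), $y=(0,0)$, $b(t)=t$ and $a'(t)=(1+\tfrac12\sqrt t)^{-1}$. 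Then $h(\gamma_t)a'(t)=\frac{1+\sqrt t}{1+\frac12\sqrt t}>1=|b'(t)|$ for $t>0$, so $\gamma$ is timelike, yet $h(y)(a(\mu)-y_t)=\int_0^\mu(1+\tfrac12\sqrt u)^{-1}\ud u<\mu=d_{y_t}(y_X,b(\mu))$ for \emph{every} $\mu\in(0,1]$. So the bridge as constructed (affine time component plus a near-geodesic at constant speed) cannot exist, and fixing $\mu$ before $\delta$ does not help.

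This is a genuine gap rather than a technicality: absorbing the vertical overshoot $\delta_*$ into the timelike piece is exactly as hard as push-up itself (it amounts to openness of $I^-(z)$ along $\gamma$, which in this paper is only obtained \emph{after} the push-up machinery). The paper avoids the problem by never splitting the curve: it concatenates the causal and timelike pieces into one causal curve $\gamma$ from $q$ to $r$ with $\tau(\gamma)>0$, keeps the spatial component fixed, and replaces the entire time component by the solution $y_\eps$ of the Carath\'eodory ODE $y'=\Phi_\eps(y,t)$ with $\Phi_\eps(s,t)=\sqrt{v_\beta(s,t)^2+\eps^2}/h(s,\beta_t)$ (Proposition \ref{prop:max-curve-modify}); the conditions \eqref{eq:lip}, \eqref{eq:lip-h-time-var} make $\Phi_\eps$ Carath\'eodory--Lipschitz, and uniqueness, continuous dependence on $\eps$, and the comparison principle give an $\eps_b\in(0,\tau(q,r)]$ for which the resulting timelike curve hits the prescribed future endpoint \emph{exactly}, with no bridge needed. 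To repair your argument you would need a replacement for the bridge that uses the positivity of $\tau(\gamma)$ globally (not pointwise near $t=0$), which in effect leads back to an argument of this comparison type.
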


The push-up property further implies the lower semicontinuity of the associated time separation function $\tau$ on $hI\times_{\mathcal F}X$. These facts together ensure the generalized Lorentzian product is a Lorentzian length space.

\begin{cor}\label{cor:lor-len-sp}
	If $\mathcal F$ and $h$ satisfy \eqref{eq:lip} and \eqref{eq:lip-h-time-var}, respectively, then $hI\times_{\mathcal F} X$ is a Lorentzian length space.
\end{cor}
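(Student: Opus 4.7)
The plan is to check that $hI\times_\mathcal{F}X$ satisfies the additional axioms required to promote the Lorentzian length structure $(I\times X,\le,\ll,\tau)$ constructed in Section \ref{sec:causal_structure} to a Lorentzian length space. Since $\tau$ is by construction the supremum of $\tau$-lengths of future-directed causal curves, the reverse triangle inequality (via concatenation) and the intrinsic length identity are automatic. Thus the task reduces to verifying (i) the push-up property, and (ii) the lower semicontinuity of $\tau$. Item (i) is immediate from Theorem \ref{thm:push-up} under the hypotheses \eqref{eq:lip} and \eqref{eq:lip-h-time-var}, so the work concentrates on (ii).

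For (ii) the standard route is via openness of the chronological relation $\ll$. I would first establish openness: given $(s,x)\ll(s',x')$, a witnessing timelike curve $\gamma_0$ can, after reparametrization so that $\alpha_0(t)=t$, be perturbed at its endpoints by prepending and appending short segments to nearby points $(s_n,x_n)$ and $(s_n',x_n')$. The connecting segments take the form $t\mapsto(t,\beta(t))$ with $\beta$ a constant-speed $d_\sigma$-quasi-geodesic between the relevant spatial points, for a suitably chosen slice time $\sigma$. The log-Lipschitz bounds \eqref{eq:lip} and \eqref{eq:lip-h-time-var} guarantee that the generalized metric speed $v_\beta$ and the conformal factor $h$ deviate from their values at $(s,x)$ by factors close to $1$, so the length element \eqref{eq:lorentzian-length-element} remains strictly positive along the perturbed curve. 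Hence for all large $n$ the perturbed curve is timelike, proving $\ll$ is open.

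Granted openness of $\ll$, I fix $p_n\to p$ and $q_n\to q$, assume $\tau(p,q)>0$, and for $\eps>0$ pick a future-directed timelike curve $\gamma:[a,b]\to I\times X$ from $p$ to $q$ with $L_\tau(\gamma)>\tau(p,q)-\eps$. Choose $a<a'<b'<b$ so that $L_\tau(\gamma|_{[a',b']})>\tau(p,q)-2\eps$ and both $p\ll\gamma(a')$ and $\gamma(b')\ll q$ hold. By openness, $p_n\ll\gamma(a')$ and $\gamma(b')\ll q_n$ for all large $n$, yielding timelike connecting curves $\eta_n^-$ and $\eta_n^+$. The concatenation $\eta_n^-\ast\gamma|_{[a',b']}\ast\eta_n^+$ is then a future-directed causal curve from $p_n$ to $q_n$, and the push-up property from (i) ensures $\tau(p_n,q_n)\ge L_\tau(\gamma|_{[a',b']})>\tau(p,q)-2\eps$. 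Letting $\eps\to 0$ yields $\liminf_n\tau(p_n,q_n)\ge\tau(p,q)$.

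The principal obstacle is the openness of $\ll$: without the log-Lipschitz condition, small time perturbations can produce causal bubbles (as in \cite{terileo}) that destroy this openness. The careful quantitative use of \eqref{eq:lip} and \eqref{eq:lip-h-time-var} to preserve strict positivity of the length element is the same analytic mechanism driving Theorem \ref{thm:push-up}, which is why the hypotheses of Corollary \ref{cor:lor-len-sp} match those of Theorem \ref{thm:push-up} verbatim.
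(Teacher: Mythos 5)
Your overall architecture coincides with the paper's: push-up from Theorem \ref{thm:push-up}, then lower semicontinuity of $\tau$ via openness of the chronological relation together with the trim-and-concatenate argument of \cite[Lemma 3.25]{agks21}. The genuine gap is in your proof of openness of $\ll$. A timelike curve in $hI\times_{\mathcal F}X$ only satisfies $|\tau_\gamma'|^2>0$ almost everywhere, with no uniform lower bound, and this is not stable under the perturbations you describe: shifting the time argument by $\delta$ multiplies $v_\beta$ by a factor at most $1+C\delta$ and changes $h$ by at most $C'\delta$, so the length element picks up an additive error of order $\delta\,(h^2\alpha'^2+v_\beta^2)$ which does \emph{not} shrink with the gap $h^2\alpha'^2-v_\beta^2$; wherever $\gamma_0$ is nearly null the perturbed curve fails to be causal. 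Similarly, your connecting segments $t\mapsto(t,\beta(t))$ must reach a point on $\gamma_0$ only slightly to its future, but since $\gamma_0$ may be almost null near its endpoints, the spatial distance to be covered is of the same order as the available time, so these segments need not be timelike (or even causal). This is exactly why the paper does \emph{not} perturb an arbitrary timelike curve: Proposition \ref{prop:max-curve-modify} first uses Carath\'eodory ODE theory — and this is precisely where \eqref{eq:lip} and \eqref{eq:lip-h-time-var} enter, through the Carath\'eodory--Lipschitz condition for $\Phi_\eps=(v_\beta^2+\eps^2)^{1/2}/h$ — to replace the time component $\alpha$ by a solution $y$ with $h^2y'^2-v_\beta^2\equiv c>0$ and the same endpoints, producing a Lipschitz, \emph{uniformly} timelike curve. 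Only with this uniform margin $c_0$ in hand do the endpoint perturbations (Lemma \ref{lem:affine-perturbation-timelike} and Proposition \ref{prop:strict-future-open}) close, since all the errors above are then absorbed by $c_0^2/2$. Your sketch has no substitute for this upgrading step, and without it the openness of $\ll$ — hence the lower semicontinuity of $\tau$ — is not established.

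A secondary but real omission: the reduction to ``push-up plus lower semicontinuity'' is too quick. Definition \ref{def:lor-len-sp} also requires local causal closedness, localizability, causal path connectedness, and the length identity $\tau(x,y)=\sup L_\tau(\gamma)$, where $L_\tau$ is the variational length \eqref{eq:tau-length}, not the integral length \eqref{eq:lorentzian_length} used to define $\tau$. In the paper these are verified via the limit-curve and compactness results (Proposition \ref{prop:causality_stable}, Corollary \ref{cor:loc-comp-causal-diamond}, which also furnish the localizing chronological diamonds) and via Proposition \ref{prop:lor=var} identifying $L_\tau(\gamma)=\tau(\gamma)$; none of these is automatic from the length-structure formalism, so they need to be cited or proved rather than dismissed.
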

The Lorentzian length space $hI\times_\mathcal FX$ is moreover strongly local, see Proposition \ref{prop:strongly-local}. However, local conditions such as \eqref{eq:lip} do not guarantee global hyperbolicity. While it is easy to see that $hI\times_\mathcal FX$ is globally hyperbolic if $h$ is bounded and the metrics in $\mathcal F$ are globally bi-Lipschitz to a proper metric (with uniformly bounded bi-Lipschitz constants), examples in \cite{san22} show that properness of the slices does not imply global hyperbolicity, nor does global hyperbolicity imply that the slices are proper. For smooth spacetimes, global hyperbolicity is nevertheless equivalent the existence of a proper metric on the underlying space: in \cite{bur-gar23} this is shown by considering the null-distance associated to a suitable time function. However in our context the natural choice of time function $(t,x)\mapsto t$ does not necessarily yield a proper null-distance, cf. \cite[§ 6.1]{san22}.

In Theorem \ref{thm:glob_hyp} below we give a sufficient condition for the global hyperbolicity in terms of $\mathcal F$ and $h$. The criterion \eqref{eq:inf-length} essentially characterizes the properness of a natural length metric on $I\times X$ associated with $\mathcal F$ and $h$ (see Section \ref{sec:gen-prod-metr}) and is phrased in terms of curves \emph{tending to infinity}. A locally absolutely continuous curve $\beta:[0,\infty)\to X$ is said to tend to infinity if, for every compact $K\subset X$, there exists $T>0$ such that $K\cap \beta([T,\infty))=\varnothing$.

\begin{thm}\label{thm:glob_hyp}
	Suppose $\mathcal F$ and $h$ satisfy \eqref{eq:lip} and \eqref{eq:lip-h-time-var}, and moreover
	\begin{align}\label{eq:inf-length}
		\int_0^\infty\frac{v_\beta(\alpha_t,t)}{h(\alpha_t,\beta_t)}\ud t=\infty
	\end{align}
whenever $\beta:[0,\infty)\to X$ tends to infinity and $\alpha:[0,\infty)\to I$ is locally absolutely continuous. Then $hI\times_\mathcal FX$ is globally hyperbolic.
\end{thm}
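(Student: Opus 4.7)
The plan is to verify the three defining conditions of global hyperbolicity for a Lorentzian length space (in the sense of Kunzinger--S\"amann): causal closedness, non-total imprisonment, and compactness of the closed causal diamonds $J(p,q):=J^+(p)\cap J^-(q)$. Since Corollary \ref{cor:lor-len-sp} already provides the Lorentzian length space structure under our hypotheses, these three properties suffice.

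Non-total imprisonment follows from the time function $(s,x)\mapsto s$, which is strictly increasing along any future-directed causal curve $\gamma=(\alpha,\beta)$. If an inextendible such curve were contained in a compact $K\Subset I\times X$, then $\alpha$ would have a finite limit $\alpha_\infty\in I$ and $\beta$ would admit a subsequential limit in the compact projection of $K$ to $X$; strong locality (Proposition \ref{prop:strongly-local}) would then furnish a continuous extension, contradicting inextendibility. Causal closedness follows from a limit curve argument: a sequence of causal curves with converging endpoints is equicontinuous with respect to the natural length metric $\rho$ on $I\times X$ introduced in Section \ref{sec:gen-prod-metr}, and Arzel\`a--Ascoli together with lower semicontinuity of the non-negativity condition $|\tau_\gamma'|^2\ge 0$ under uniform convergence yields a causal limit curve.

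The main step is compactness of $J(p,q)$ for $p=(s_0,x_0)$, $q=(s_1,x_1)$. The approach uses $\rho$ together with the fact, flagged in the discussion preceding the theorem, that \eqref{eq:inf-length} is essentially equivalent to properness of $(I\times X,\rho)$. Granted this, it suffices to show $J(p,q)$ is $\rho$-bounded. For any $r\in J(p,q)$ and any future-directed causal curve $\gamma=(\alpha,\beta)\colon[0,1]\to I\times X$ from $p$ to $r$, causality and \eqref{eq:lorentzian-length-element} give $v_\beta(\alpha_t,t)\le h(\gamma_t)\alpha'(t)$ a.e., and $\alpha(1)-\alpha(0)\le s_1-s_0$ bounds both the time variation and the weighted integral $\int_0^1 v_\beta/h\,\ud t$ along $\gamma$. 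Substituting these bounds into the length element of $\rho$, which combines the time variation of $\alpha$ weighted by $h$ with the slice-wise variation of $\beta$, and invoking \eqref{eq:lip} to uniformly compare the metrics $d_s$ on a compact subset of $X$ containing the image, one bounds the $\rho$-length of $\gamma$ by a constant $C(p,q)$. Hence $\rho(p,r)\le C(p,q)$, so $J(p,q)$ is $\rho$-precompact by properness, and causal closedness promotes this to compactness.

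The principal obstacle is the precise link between \eqref{eq:inf-length} and properness of $\rho$, which is the main technical input borrowed from Section \ref{sec:gen-prod-metr}. One direction is straightforward, since a curve tending to infinity with finite weighted length immediately yields an escaping $\rho$-bounded sequence. The converse requires a diagonal or selection argument to assemble a single witness curve from the family of almost-geodesics of $\rho$ connecting the basepoint to an escaping sequence; without such a selection, \eqref{eq:inf-length}, which quantifies over single curves on $[0,\infty)$, would not immediately preclude the family-style escape that arises from a $\rho$-bounded sequence. This is precisely where the length-space structure of $\rho$ is essential, and once this correspondence is in place the $\rho$-boundedness estimate above completes the argument.
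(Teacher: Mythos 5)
Your overall architecture (length bound on causal curves, properness of a background length metric, limit-curve argument for closedness of diamonds) is the same as the paper's, but two points in your main step are genuinely problematic. First, the claim that \eqref{eq:inf-length} is ``essentially equivalent to properness of $(I\times X,\rho)$'' is false in general: if $I$ is a bounded open interval, a vertical curve $t\mapsto(\alpha_t,x)$ with $\alpha_t$ tending to an endpoint of $I$ leaves every compact subset of $I\times X$ with finite $(d_{\mathcal F,h})_{h\inv}$-length, so $(I\times X,(d_{\mathcal F,h})_{h\inv})$ is never proper in that case, no matter what \eqref{eq:inf-length} says (which only constrains curves whose $X$-component escapes). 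The paper avoids this by proving properness of $(J\times X,(d_{\mathcal F_J,h})_{h\inv})$ for each \emph{compact} subinterval $J\subset I$ (Corollary \ref{cor:properness-criterion}), noting that a curve tending to infinity in $J\times X$ forces $\beta$ to tend to infinity in $X$, and then Lemma \ref{lem:cpt-diamonds} confines $J(p,q)$ to $J\times K$; your causal-length estimate does show causal curves from $p$ stay in the slab $[s_0,s_1]\times X$, so the fix is available, but your argument as written invokes a properness statement that does not hold. Second, the hard direction of the properness criterion — that the divergence of the weighted length of every escaping curve forces properness — is exactly the content of Lemma \ref{lem:properness-criterion} (Hopf--Rinow plus a concatenation of near-minimizers for a Cauchy sequence into a single locally absolutely continuous curve of finite weighted length), and you only gesture at it (``a diagonal or selection argument'') while attributing it to Section \ref{sec:gen-prod-metr}, where no such result is stated. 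Since this is the main technical input of the theorem, leaving it unproved is a real gap rather than a citation.

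Two smaller remarks. Your non-imprisonment argument proves the smooth-spacetime style statement (no inextendible causal curve is contained in a compact set), whereas the paper's definition of totally non-imprisoning asks for a uniform bound on the $\tau$-length of causal curves inside a compact set; the latter follows at once from the causal estimate $L_\tau(\gamma)\le(\sup_K h)\,|\alpha_b-\alpha_a|$ (in the spirit of Remark \ref{rmk:causal-curve-length-bd}), so you should argue that instead. Also, ``lower semicontinuity of $|\tau_\gamma'|^2\ge 0$ under uniform convergence'' is not automatic for an a.e.\ pointwise condition; it is exactly what Proposition \ref{prop:causality_stable} establishes via the integrated reformulation and Lemma \ref{lem:weak-conv-gen-speed}, so cite that rather than treating it as obvious.
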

Observe that \eqref{eq:inf-length} does not imply the properness of the metrics in $\mathcal F$ but instead of the conformally weighted metrics $(d_s)_{h(s,\cdot)\inv}$; however it is a stronger condition than the propeness of each $(d_s)_{h(s,\cdot)\inv}$, $s\in I$. 

\subsection{Regularity}\label{sec:regularity} The regularity of $hI\times_{\mathcal{F}}X$ seems more difficult to establish. Roughly speaking, a Lorentzian length space is called regular, if \emph{maximizing} curves are either null or time-like (have a definite causal character). These are curves maximizing the Lorentzian length of curves joining given points, and they replace \emph{geodesics} in the current non-smooth framework where no geodesic equation is available. Unlike geodesics, maximizing curves in Lorentzian length spaces need not have a definite causal character; they can have null-segments while having positive Lorentzian length. Regularity is known to hold for spacetimes with a Lorentzian metric of Lipschitz regularity, due to a weak formulation of the geodesic equation \cite{graf-ling18,lange-lytchak-sam21}. However, even in the setting of Lorentzian metrics, \eqref{eq:lip} and \eqref{eq:lip-h-time-var} are weaker than Lipschitz regularity, and we do not know whether they imply regularity of the spacetime in the absence of a suitable Lipschitz condition on the space variable. However we establish regularity of $hI\times_{\mathcal{F}}X$ in the current framework in the special case $\mathcal F=\{d_{\rho(s,\cdot)}\}_{s\in I}$, where $\rho:I\times X\to (0,\infty)$ is a (locally Lipschitz) is a given \emph{conformal weight}. Here, given a length metric $d$ on $X$ and a continuous $g:X\to (0,\infty)$, the conformally weighted metric $d_g$ is a length metric on $X$ such that 
\[
|\beta_t'|_{d_g}=g(\beta_t)|\beta_t'|\quad a.e.\ t
\]
for curves $\beta$ in $X$. The precise definition is given in Section \ref{sec:preli-metric}, compare \eqref{eq:rho-length}.

\begin{thm}\label{thm:regular-loc-lor-len-sp}
Let $d$ be a locally compact length metric on $X$. Suppose that $h,\rho:I\times X\to (0,\infty)$ are locally Lipschitz, and set $\mathcal F=\{d_{\rho(s,\cdot)}\}$. Then $\mathcal F$ satisfies \eqref{eq:lip}, and $hI\times_{\mathcal F}X$ is a regular Lorentzian length space.
\end{thm}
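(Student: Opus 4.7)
Condition \eqref{eq:lip} is a direct consequence of the conformal structure. Fix compact $J\subset I$ and $K\subset X$ and pick a slightly larger compact $K'\supset K$. On $J\times K'$ the function $\rho$ is bounded above and below by positive constants and locally Lipschitz in $s$, so $\log\rho$ is Lipschitz in $s$ with some constant $L$. The conformal length representation
\[
	d_{\rho(s,\cdot)}(x,y)=\inf_\beta\int\rho(s,\beta_t)|\beta_t'|_d\,\ud t,
\]
where the infimum is over AC curves from $x$ to $y$, together with the observation that near-minimizers for small $d_s(x,y)$ remain in $K'$, combined with the pointwise bound $\rho(s,\cdot)\le e^{L|s-s'|}\rho(s',\cdot)$ on $K'$, yields after integration and taking infima that $d_{\rho(s,\cdot)}(x,y)\le e^{L|s-s'|}d_{\rho(s',\cdot)}(x,y)$, which is \eqref{eq:lip}.

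For the regularity assertion let $\gamma=(\alpha,\beta):[0,T]\to I\times X$ be a future-directed maximizing causal curve, so that $L(\gamma)=\tau(\gamma(0),\gamma(T))$. In the conformal setting the generalized metric speed of $\beta$ satisfies $v_\beta(s,t)=\rho(s,\beta(t))|\beta'(t)|_d$, so
\[
	|\tau_\gamma'|^2(t)=h(\gamma_t)^2\alpha'(t)^2-\rho(\gamma_t)^2|\beta'(t)|_d^2.
\]
After discarding subintervals on which $\gamma$ is constant, assume $|\beta'|_d>0$ a.e. Suppose for contradiction that both $A=\{|\tau_\gamma'|^2>0\}$ and $B=\{|\tau_\gamma'|^2=0\}$ have positive measure. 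Write $\alpha_0\defeq \rho|\beta'|_d/h$ (the null slope, evaluated along $\gamma$), so that $\alpha'=\alpha_0$ on $B$ and $\alpha'-\alpha_0\ge c>0$ on a suitable subset of $A$. I would then select Lebesgue-density subsets $A'\subset A$ and $B'\subset B$ of equal small measure, keep $\beta$ fixed, and form a competitor $\tilde\gamma=(\tilde\alpha,\beta)$ by setting $\tilde\alpha'=\alpha'-\epsilon$ on $A'$, $\tilde\alpha'=\alpha'+\epsilon$ on $B'$, and $\tilde\alpha'=\alpha'$ elsewhere; this preserves $\tilde\alpha(T)=\alpha(T)$ and hence both endpoints of $\gamma$.

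The decisive input is the asymmetry of $\sqrt{\,\cdot\,}$ near zero. A direct expansion using $\rho|\beta'|_d=h\alpha_0$ gives on $B'$ the length element $\sqrt{|\tau_{\tilde\gamma}'|^2}\approx h\sqrt{2\alpha_0\epsilon}$, an increase of order $\sqrt\epsilon$, while on $A'$ the loss is $\approx h^2\alpha'\epsilon/|\tau_\gamma'|$, only of order $\epsilon$. Summing, $L(\tilde\gamma)-L(\gamma)\ge C_1\sqrt\epsilon-C_2\epsilon>0$ for $\epsilon$ small, contradicting $L(\gamma)=\tau(\gamma(0),\gamma(T))$. The hard part is the rigorous execution: (i) the causality constraint $\tilde\alpha'\ge\rho(\tilde\gamma)|\beta'|_d/h(\tilde\gamma)$ on $A'$ must be preserved, which requires the slack $c$ to dominate the $O(\epsilon)$ drift of the null slope induced by $\alpha\to\tilde\alpha$; (ii) the locally Lipschitz hypothesis on $h,\rho$ gives $|h(\tilde\gamma)-h(\gamma)|,|\rho(\tilde\gamma)-\rho(\gamma)|=O(\epsilon)$, whose contribution to $L(\tilde\gamma)-L(\gamma)$ is of lower order than the $\sqrt\epsilon$ gain; and (iii) the Lebesgue-density selection of $A',B'$ is needed to convert pointwise asymptotics into the stated integral estimates via dominated convergence. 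It is precisely the locally Lipschitz assumption on $h$ and $\rho$ that ensures the $\sqrt\epsilon$ gain dominates the $O(\epsilon)$ drift; mere continuity would not suffice.
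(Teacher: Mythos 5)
The verification of \eqref{eq:lip} is fine and is essentially the paper's own argument (Lipschitz bound on $\rho$ in $s$, near-minimizing curves for small $d_s(x,y)$ trapped in a compact neighbourhood). The regularity half, however, has a genuine gap at its key step: your competitor $\tilde\gamma=(\tilde\alpha,\beta)$ is in general \emph{not causal}, so it cannot be compared with $\tau(\gamma_0,\gamma_T)$, which is a supremum over causal curves only. You check the causality constraint on $A'$ (your point (i)) and implicitly on $B'$, but not on the untouched null portion $B\setminus B'$, which has positive measure. There the original curve has zero slack, $h(\gamma)^2\alpha'^2=\rho(\gamma)^2|\beta_t'|_d^2$, while the perturbation shifts the time coordinate by $\tilde\alpha-\alpha$ of size up to $\epsilon|A'|$; since $h$ and $\rho$ vary in $s$ with a sign you do not control, $h(\tilde\gamma)^2\alpha'^2-\rho(\tilde\gamma)^2|\beta_t'|_d^2$ can become negative there, of order $\epsilon|A'|$. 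Relatedly, point (ii) is too quick: the drift enters the length through the square root at points where $|\tau_\gamma'|^2$ is near $0$, where one only has $|\sqrt{(a+\delta)_+}-\sqrt{a_+}|\le\sqrt{|\delta|}$, so the loss on the near-null timelike portion is a priori of order $\sqrt{\epsilon\sigma}$, comparable to the gain $\sigma\sqrt{\epsilon}$ on $B'$; salvaging this needs a genuinely more careful variation (and a reparametrization making $\alpha'$ and $|\beta_t'|_d$ bounded, which you never perform). This is not a technicality: controlling exactly this interplay between the $\sqrt\epsilon$ gain, the drift, and the causality constraint on the remaining null set is the substance of the known causal-character theorems for Lipschitz spacetimes (Graf--Ling, Lange--Lytchak--S\"amann), and the statement fails below Lipschitz regularity, so a one-page first-variation argument cannot be expected to close it.

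The paper does not reprove that theorem; it reduces to it. After reparametrizing so that $|\beta_t'|_d$ is constant and $\alpha$ is Lipschitz, it equips $Q=\alpha([a,b])\times[a,b]\subset\R^2$ with the Lorentzian metric $g_Q=-h(s,\beta_t)^2\,\ud s^2+v_\beta(s,t)^2\,\ud t^2$, which is Lipschitz precisely because $h,\rho$ are locally Lipschitz and $\beta$ is Lipschitz; Lemma \ref{lem:Q-causal-iff-causal} shows $y\mapsto(y_0,\beta\circ y_1)$ preserves causality and length, so maximality of $\gamma$ forces $(\alpha,\mathrm{id})$ to be a $g_Q$-maximizer, and the Lipschitz regularity theorem then gives that it, hence $\gamma$, is timelike (Theorem \ref{thm:causal-character}). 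Your sketch also omits two points that the full statement requires, though they are routine given earlier results: that $hI\times_{\mathcal F}X$ is a Lorentzian length space at all (local Lipschitzness of $h$ gives \eqref{eq:lip-h-time-var}, so Corollary \ref{cor:lor-len-sp} applies), and that regularity is the \emph{local} condition of locally regularizability, verified in localizing chronological diamonds. If you want a self-contained variational proof you would in effect be reproving the Lipschitz causal-character theorem; otherwise the cleanest repair of your argument is the paper's two-dimensional reduction.
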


\subsection{Remarks on synthetic time-like Ricci curvature bounds}\label{sec:intro-TCD} The metric techniques used in \cite{gig13,gig14} to prove the splitting theorem of RCD-spaces are not available in the Lorentzian setting as of yet. In the setting of generalized Lorentzian products $hI\times_\mathcal FX$, where the splitting is a priori available, we may instead ask what restrictions do synthetic time-like Ricci curvature bounds place on $h$ and $\mathcal F$, particularly concerning rigidity: if $\R\times_\mathcal FX$ equipped with a measure $\bm m=m_s\otimes\ud s$ has non-negative synthetic time-like Ricci curvature, must $s\mapsto m_s$ and $s\mapsto d_s$ be constant?

Here, $\mathcal M=\{m_s\}_{s\in I}$ is a family of measures on $X$ such that (a) $s\mapsto m_s(E)$ is Borel whenever $E\subset X$ is Borel, (b) $\limsup_{r\to 0}\frac{m_s(B_{d_s}(x,2r))}{m_s(B_{d_s}(x,r))}<\infty$ $m_s$-a.e. $x\in X$ for each $s\in I$ (the infinitesimal doubling property),  and (c) for each compact $J\subset I$ and $K\subset X$, there exists $C=C(J,K)>0$ with
\begin{align}\label{eq:measure-comp}
	m_{s'}|_K\le Cm_s|_K,\quad s,s'\in J.
\end{align}

In Section \ref{sec:curv} we formulate the wTCD$_p$($K,N$)-condition on Lorentzian length structures; the ensuing notion (Definition \ref{def:curv}) agrees with the \cite[Definition 2.20]{braun23} for ($\mathcal K$-)globally hyperbolic regular Lorentzian length spaces. Although the tools to obtain splitting and its rigidity are not available, utilizing the product structure and techniques akin to the needle decomposition in \cite{cav-mon17}, we show that, if $hI\times\mathcal FX$ satisfies the wTCD$_p$($K,N$)-condition, then the space slices $hI\times \{x\}$ are RCD($K,N$)-spaces for a.e. $x\in X$. We refer to Corollary \ref{cor:RCD-lines} for the detailed statement. We remark moreover that, in contrast to \cite{cav-mon17}, no non-branching assumption on $hI\times_\mathcal FX$ is needed due to the product structure available in our setting. As a corollary of Theorem \ref{thm:k,n-convex} we obtain the following partial rigidity of time-like non-negatively curved generalized Lorentzian products.

\begin{cor}\label{cor:partial-rigidity}
Suppose $\mathcal F$ and $\mathcal M$ satisfy  \eqref{eq:cont} and \eqref{eq:measure-comp}, respectively. Let $\bm m=m_s\otimes \ud s$ and suppose $(\R\times_\mathcal FX,\bm m)$ satisfies the wTCD$_p$($0,N$)-condition for $p\in (0,1)$ and $N\ge 1$. Then there is a measure $m$ on $X$ such that $m_s=m$ for a.e. $s\in I$. 

\end{cor}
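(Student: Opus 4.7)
The plan is to reduce the claim to Corollary~\ref{cor:RCD-lines} combined with the classification of one-dimensional RCD$(0,N)$-spaces. By the compatibility hypothesis \eqref{eq:measure-comp}, the measures $m_s$ are pairwise locally mutually absolutely continuous, so fixing the reference $m'\defeq m_0$ and writing $m_s=f_s\cdot m'$ with measurable densities $f_s\co X\to(0,\infty)$, one obtains the product representation
\begin{equation*}
\bm m = f_s(x)\,\ud s\otimes m'
\end{equation*}
on $\R\times X$.

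Next, Corollary~\ref{cor:RCD-lines} applied to $(\R\times_\mathcal F X,\bm m)$ under the wTCD$_p(0,N)$-hypothesis would yield, for $m'$-a.e.\ $x\in X$, that the vertical timelike slice $\R\times\{x\}$, equipped with the intrinsic 1D metric-measure structure inherited from the generalized Lorentzian product, is an RCD$(0,N)$-space. Under the product decomposition above, the corresponding 1D density along the slice (with respect to the length measure) is precisely the function $s\mapsto f_s(x)$ on $\R$.

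At this point I would invoke the classification of one-dimensional RCD$(0,N)$-spaces: the reference measure of such a space isometric to $\R$ is necessarily of the form $\psi\,\mathcal L^1$ with $\psi^{1/(N-1)}$ (resp.\ $\psi$ when $N=1$) concave on $\R$. Any nonnegative concave function on the full real line is constant, whence $\psi$, and therefore $s\mapsto f_s(x)$, is constant in $s$ for $m'$-a.e.\ $x\in X$. Setting $f(x)\defeq f_0(x)$ and $m\defeq f\cdot m'$, Fubini then gives $m_s=m$ for a.e.\ $s\in\R$, as desired.

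The principal obstacle is the clean invocation of Corollary~\ref{cor:RCD-lines}, specifically in identifying the intrinsic 1D density on each needle $\R\times\{x\}$ with the $s$-density $f_s(x)$ arising from the product form $\bm m = m_s\otimes\ud s$, so that constancy of the former translates directly to constancy of the latter rather than to some reweighted variant involving the length element along the slice. A secondary subtlety is the measure-theoretic bookkeeping needed to pass from the slicewise a.e.\ constancy of $s\mapsto f_s(x)$ to the $s$-a.e.\ identity of measures $m_s$ on $X$, which Fubini accomplishes but requires care with null sets in the product space.
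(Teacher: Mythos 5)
Your argument is correct and is essentially the paper's own proof: the paper likewise slices the product, applies Theorem \ref{thm:k,n-convex} (of which Corollary \ref{cor:RCD-lines} is the restatement you invoke) with $h\equiv1$, $K=0$ on the exhaustion $J_k=[-k,k]$ to get that a power of the vertical density is concave on all of $\R$, hence constant, and then integrates back to identify $m_s$. The obstacle you flag dissolves on unwinding the paper's definitions: the slice density in Corollary \ref{cor:RCD-lines} is by construction the Radon--Nikodym derivative $g(\cdot,x)$ of $\bm m$ with respect to $\mathcal L^1\otimes m$, which differs from your $f_s(x)$ only by the $s$-independent factor $\ud m/\ud m_0(x)$, so constancy in $s$ transfers directly (and whether one uses the exponent $1/N$ from \eqref{eq:log-KN-convex} or $1/(N-1)$ from the one-dimensional classification is immaterial for the constancy step).
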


\subsection*{Organization of the paper} 

We give some background on in Section \ref{sec:preli}: \emph{metric spaces} (Section \ref{sec:preli-metric}), \emph{Lorentzian length spaces} and \emph{structures} (Sections \ref{sec:preli-lorentz} and \ref{sec:preli-lor-st.}), and \emph{time-like Ricci curvature bounds} (Section \ref{sec:curv}). We then define the generalized metric speed and establish its basic properties in Section \ref{sec:gen-metr}.

In Section \ref{sec:lor-len-st} we define the generalized Lorentzian product, based on generalized metric speed. We discuss its causal structure in Section \ref{sec:causal_structure}, the basic properties of causal curves in Section \ref{sec:causal curves}, and prove Corollary \ref{cor:compatible} in Section \ref{sec:manifolds}.

In Section \ref{sec:lorentzian_length_space} we prove Theorem \ref{thm:push-up}, Corollary \ref{cor:lor-len-sp}. The proofs boil down to using Caratheodory theory of ODE's (Appendix \ref{sec:ODE}), which the log-Lipschitz condition \eqref{eq:lip} makes available. Theorem \ref{thm:glob_hyp} is proved in Section \ref{sec:glob_hyp} using a properness criterion given by \eqref{eq:inf-length}. Theorem  \ref{thm:regular-loc-lor-len-sp}, proved in Section \ref{sec:regular}, follows by reducing the problem to Lipschitz regular Lorentzian metrics.

Finally, Section \ref{sec:ricci} contains the discussion of synthetic time-like Ricci curvature bounds on generalized Lorentzian products, and the proof of Corollary \ref{cor:partial-rigidity}.

\subsection*{Acknowledgements} The research in this manuscript was done during my stay at Radboud University and the University of Warwick, as well as the Thematic Program on Nonsmooth Riemannian and Lorentzian Geometry at the Fields Institute. I gratefully acknowledge the financial support of the Radboud Excellence Initiative, the Fields Institute and the ERC (grant no. 948021) of Prof. Bate (Warwick). I would also like to thank Leonardo Garcia-Heveling, Annegret Burtscher, Mathias Braun and Nicola Gigli for many interesting discussions on this manuscript and beyond.

\section{Background}\label{sec:preli}

\subsection{Metric spaces}\label{sec:preli-metric}
Let $(X,d)$ be a metric space. A curve $\beta:[a,b]\to X$ is called absolutely continuous if there exists $g\in L^1(a,b)$ such that 
\begin{align}\label{eq:abs-cont}
d(\beta_s,\beta_t)\le \int_t^sg(r)\ud r,\quad a\le t\le s\le b.
\end{align}
In this case, the limit $\displaystyle |\beta_t'|\defeq \lim_{h\to 0}\frac{d(\beta_{t+h},\beta_t)}{|h|}$ exists a.e. $t\in [a,b]$, and $t\mapsto |\beta_t'|$ is the pointwise a.e. smallest function for which \eqref{eq:abs-cont} holds. We define the length of $\beta$ as $\displaystyle \ell(\beta)=\int_a^b|\beta_t'|\ud t.$ Moreover, there exists a non-decreasing function $s_\beta:[0,\ell(\beta)]\to [a,b]$ and a parametrization $\bar\beta:[0,\ell(\beta)]\to X$, called the \emph{arc-length parametrization} of $\beta$, such that $\beta\circ s_\beta=\bar\beta$ and $|\bar\beta_t'|=1$ for every $t\in [0,\ell(\beta)]$. See \cite{BBI01,haj03}. We say that $d$ is a length metric if 
\begin{align*}
	d(x,y)=\inf\{\ell(\beta)|\ \beta:x\curvearrowright y \},\quad x,y\in X.
\end{align*}

\medskip\noindent To define metrics with a given conformal factor, we first define the line integral of a Borel function $g:X\to [0,\infty]$ along an absolutely continuous curve $\beta:[a,b]\to X$ by
\begin{align*}
	\int_\beta g\,\ud s\defeq \int_a^bg(\beta_t)|\beta_t'|\ud t.
\end{align*}
Note that the line-integral is independent of the parametrization of $\beta$, cf. \cite[Theorem 3.12]{haj03}. Suppose now $d$ is a locally compact length metric on $X$, and let $\rho:X\to (0,\infty)$ be a continuous function. We define the weighted length 
\begin{align*}
	L_\rho(\beta)\defeq \int_\beta\rho\,\ud s
\end{align*}
for any absolutely continuous curve $\beta$ in $X$. Then $L_\rho$ is lower semicontinuous, and the arising length structure $(L_\rho, AC(X))$ on $X$ is compatible with the metric topology of $(X,d)$; the length metric
\begin{align*}
	d_\rho(x,y)=\inf\Big\{\int_a^b\rho(\beta_t)|\beta_t'|\ud t\Big| \ \beta:x\curvearrowright y\},\quad x,y\in X,
\end{align*}
induces the same topology as $d$. Moreover by \cite[Theorem 2.4.3]{BBI01} we have that
\begin{align}\label{eq:rho-length}
|\beta_t'|_{d_\rho}=\rho(\beta_t)|\beta_t'|\quad a.e.\ t\in [a,b]
\end{align}
for every absolutely continuous curve $\beta:[a,b]\to (X,d)$.
\begin{lemma}\label{lem:properness-criterion}
Assume $(X,d)$ is a locally compact length space, and $\rho:X\to (0,\infty)$ continuous. If $\displaystyle \int_0^\infty\rho(\beta_t)|\beta_t'|\ud t=\infty$ whenever $\beta:[0,\infty)\to X$ is locally absolutely continuous and  leaves every compact subset of $X$, then $(X,d_\rho)$ is proper and geodesic.
\end{lemma}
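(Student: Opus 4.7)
The plan is to apply the Hopf--Rinow--Cohn-Vossen theorem for locally compact complete length spaces. Since $d_\rho$ is by construction a length metric and, by the preceding discussion, induces the same topology as $d$, the space $(X,d_\rho)$ is automatically a locally compact length space. Thus it suffices to establish that $(X,d_\rho)$ is complete; properness and the geodesic property then follow at once.

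To prove completeness I argue by contradiction. Given a non-convergent Cauchy sequence $(x_n)$ in $(X,d_\rho)$, I extract a subsequence with $d_\rho(x_n,x_{n+1})<2^{-n}$, and for each $n$ choose a curve $\beta_n$ from $x_n$ to $x_{n+1}$ with $L_\rho(\beta_n)\le 2^{1-n}$, parametrized on an interval $[T_{n-1},T_n]$ by $d_\rho$-arclength, so that $\rho(\beta_n(t))|\beta_n'(t)|=1$ a.e. Concatenating yields a locally absolutely continuous curve $\beta\colon[0,T_\infty)\to X$ with $T_\infty\le 2$ and
$$
\int_0^{T_\infty}\rho(\beta_t)|\beta_t'|\,\ud t \;=\; T_\infty \;<\; \infty.
$$
Reparametrizing by a smooth homeomorphism $\phi\colon[0,\infty)\to[0,T_\infty)$, for instance $\phi(t)=T_\infty t/(1+t)$, produces a locally absolutely continuous curve $\gamma\defeq\beta\circ\phi\colon[0,\infty)\to X$; a change of variables shows that $\gamma$ has the same finite total $\rho$-length as $\beta$.

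The crux is to verify that $\gamma$ leaves every compact subset of $X$ as $t\to\infty$, which then contradicts the standing hypothesis and closes the argument. Finite $\rho$-length makes $\beta$ a $d_\rho$-Cauchy curve as $t\to T_\infty^-$, so any accumulation point $y\in X$ of a sequence $\beta(t_j)$ with $t_j\to T_\infty$ would in fact be a $d_\rho$-limit of $\beta$; this would force $x_n\to y$, contradicting the non-convergence of $(x_n)$. Consequently $\beta$ (and hence $\gamma$) has no accumulation point in $X$, which in a locally compact space is equivalent to leaving every compact subset. The main, if minor, obstacle is this non-accumulation step; the concatenation, reparametrization, and final appeal to Hopf--Rinow--Cohn-Vossen are routine.
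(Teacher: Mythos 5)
Your proof is correct and follows essentially the same route as the paper: reduce to completeness via Hopf--Rinow, take a fast Cauchy subsequence, concatenate nearly $\rho$-minimizing connecting curves into a locally absolutely continuous curve on $[0,\infty)$ of finite $\rho$-length, and invoke the hypothesis to force an accumulation point and hence convergence. The only differences are cosmetic: you argue by contradiction and reparametrize from a finite arclength interval to $[0,\infty)$, whereas the paper parametrizes the pieces on $[j,j+1]$ and concludes directly.
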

\begin{proof}
By the Hopf--Rinow theorem it suffices to show that $(X,d_\rho)$ is complete. Suppose $(x_j)\subset X$ is a Cauchy sequence for $d_\rho$. The claim follows if we show that $(x_j)$ contains a subsequence converging to a point $x\in X$ with respect to $d_\rho$. Pass to a subsequence (labeled with the same indices) such that $d_\rho(x_j,x_{j+1})<2^{-j-1}$, and let $\beta_j:[j,j+1]\to X$ be an absolutely continuous curve connecting $x_j$ to $x_{j+1}$ constant speed parametrized (with respect to $d$) such that $L_\rho(\beta_j)<d_\rho(x_j,x_{j+1})+2^{-j-1}<2^{-j}$. Consider the concatenation $\beta:[0,\infty)\to X$ defined such that $\beta|_{[0,N]}=\beta_{N-1}\cdots\beta_1\beta_0$. We have that $\beta$ is locally absolutely continuous and
\begin{align*}
	\int_0^\infty\rho(\beta_t)|\beta_t'|_d\ud t=\sum_{j=0}^\infty L_\rho(\beta_j)<2,
\end{align*}
hence-- by our assumption -- it cannot leave every compact set. Consequently there exists a compact set $K\subset X$ such that $(x_j)\subset K$. We conclude that a subsequence of $(x_j)$ converges to $x\in X$ in $d$, and thus in $d_\rho$. 

\end{proof}

\subsection{Lorentzian length spaces}\label{sec:preli-lorentz} The definitions below are taken from \cite{kun-sam18}, which the interested reader should consult for more details. A causal set is a space $Y$ equipped with a transitive and reflexive relation $\le$, together with a transitive relation $\ll$ contained in $\le$. We say that a causal set $(Y,\le,\ll)$ satisfies the push-up property, if \emph{$x\ll z$ whenever $x\le y$ and $y\ll z$ or $x\ll y$ and $y\le z$}.

\begin{defn}\label{def:lor-pre-length}
A Lorentzian pre-length space $Y=(Y,\le,\ll,\tau)$ consists of a causal set $(Y,\le,\ll)$ together with a metric $d$ on $Y$, and a lower semicontinuous time-separation function $\tau:Y\times Y\to [0,\infty]$ such that $\tau(x,y)>0$ if and only if $x\ll y$, satisfying the reverse triangle inequality:
\begin{align*}
	\tau(x,y)+\tau(y,z)\le \tau(x,z)\quad\textrm{if}\quad x\le y\le z.
\end{align*}
\end{defn}
A Lorentzian pre-length space satisfies the push-up property, and the past 
$I^+(x)=\{y: x\ll y \}$ and future $I^-(x)=\{y: y\ll x\}$ of a point $x\in Y$ are open. The corresponding sets with $\ll$ replaced by $\le$ are denoted $J^\pm(x)$. A Lorentzian pre-length space $Y$ is called 
\begin{itemize}
	\item \emph{locally causally closed}, if every point in $Y$ has a neighbourhood $U$ such that $\le$ is closed in $\bar U\times \bar U$;
	\item \emph{strongly causal}, if the metric topology of $Y$ is generated by the chronological diamonds $I(x,y)=I^+(x)\cap I^-(y)$, $x,y\in Y$;
	\item \emph{totally non-imprisoning}, if for any compact $K\subset Y$ there exists $C$ such that $L_\tau(\gamma)\le C$ for any causal curve $\gamma$ in $K$;
	\item \emph{Globally hyperbolic}, if it is totally non-imprisoning and causal diamonds $J(x,y)\defeq J^+(x)\cap J^-(y)$ ($x,y\in Y$) are compact.
\end{itemize}

A curve $\gamma:[a,b]\to Y$ is future-directed causal (resp. time-like) if $\gamma_t\le \gamma_s$ (resp. $\gamma_t\ll\gamma_s$) whenever $a\le t< s\le b$. The \emph{Lorentzian length} or $\tau$-length of a causal curve $\gamma$ is
\begin{align}\label{eq:tau-length}
	L_\tau(\gamma)=\inf\Big\{\sum_{i=1}^N\tau(\gamma_{t_{i-1}},\gamma_{t_i}):\ a=t_0<\cdots< t_N=b \Big\},
\end{align}
and $\gamma$ is called \emph{maximal}, if $L_\tau(\gamma)=\tau(\gamma_a,\gamma_b)$. We call a Lorentzian pre-length space $Y$
\begin{itemize}
	\item[(i)] \emph{causally path connected}, if there exists a causal curve $\gamma:x\curvearrowright y$ whenever $x\le y$, and a time-like curve $\gamma:x\curvearrowright y$ whenever $x\ll y$;
	\item[(ii)] \emph{geodesic}, if there exists a maximal curve $\gamma:x\curvearrowright y$ whenever $x\le y$;
 	\item[(iii)] \emph{localizable}, if every point in $Y$ has a neighbourhood $U$ such that
 	\[ L_\tau^U(p,q)\defeq \sup\{L_\tau(\gamma):\ \gamma:p\curvearrowright q \textrm{ causal, and } \gamma\subset U \},\quad p,q\in U
 	\]
 	is bounded from above and $L_\tau^U\le \tau|_{U\times U}$, and moreover  $Y_U\defeq (U,\le_{U\times U},\ll_{U\times U},L_\tau^U)$ is a geodesic Lorentzian pre-length space;
 	\item[(iv)] \emph{locally regularizable}, if the neighbourhood $U$ above can be chosen so that maximizing curves in $Y_U$ with positive $\tau$-length are time-like.
\end{itemize}
A neighbourhood $U$ as in (iii) and/or (iv) is called a localizing neighbourhood. Locally regularizability ensures that maximizing curves have a definite causal character, i.e. they are either time-like or null (zero $\tau$-length), cf. \cite[Theorem 3.18]{kun-sam18}.

\begin{defn}(Definition 2.8 and 3.22 in \cite{kun-sam18})\label{def:lor-len-sp}
A localizable, locally causally closed, and causally path connected Lorentzian pre-length space $(Y,\le,\ll,\tau)$ is called a 
\begin{itemize}
\item[(1)] \emph{Lorentzian length space}, if $\tau(x,y)=\sup\{L_\tau(\gamma):\ \gamma:x\curvearrowright y\textrm{ causal} \}$ whenever $x\le y$;
\item[(2)] \emph{regular Lorentzian length space}, if it is a locally regularizable Lorentzian length space;
\item[(3)] \emph{Lorentzian geodesic space}, if it is a geodesic Lorentzian length space.
\end{itemize} 
\end{defn}

\subsection{Lorentzian length structures}\label{sec:preli-lor-st.} We sketch the idea of Lorentzian lengths structures here and refer to \cite[Appendix A]{agks21} for the precise definitions. Suppose $\mathcal I^+ \subset \mathcal C^+\subset AC(Y,d)$ are families of curves in a metric space $(Y,d)$ closed under concatenation, restriction, and reparametrization. If $\stackrel{\leftarrow}{\gamma}$ denotes the curve $\gamma$ with reversed orientation and $\mathcal I^-=\{\stackrel{\leftarrow}{\gamma}:\ \gamma\in \mathcal I^+\}$, $\mathcal C^-=\{\stackrel{\leftarrow}{\gamma}:\ \gamma\in \mathcal C^+\}$, then $(\mathcal I^\pm,\mathcal C^\pm)$ is  admissible in the sense of \cite[Definition A.1]{agks21}. For the following definition set $\mathcal I\defeq \mathcal I^-\cup\mathcal I^+$, $\mathcal C\defeq \mathcal C^-\cup\mathcal C^+$.

\begin{defn}\label{def:lor-len-str}
A Lorentzian length structure on $Y$ consists of such a tuple $(\mathcal I^\pm,\mathcal C^\pm)$ together with a Lorentzian length functional $L:\mathcal C\to [0,\infty]$ that is additive, invariant under reparametrization, strictly positive on $\mathcal I$, and compatible with the topology: if $\gamma:[a,b]\to Y$ is in $\mathcal C$, then $t\mapsto L(\gamma|_{[a,t]})$ is continuous.
\end{defn}
We call curves in $\mathcal I^\pm$ (resp. $\mathcal C^\pm$) past/future time-like (resp. causal). A Lorentzian length structure $(Y,\mathcal I^\pm,\mathcal C^\pm,L)$ induces a causal set $(Y,\le,\ll)$, where $x\le y$ if there exists $\gamma:x\curvearrowright y$ in $\mathcal C^+$, and $x\ll y$ if there exists $\gamma:x \curvearrowright y$ in $\mathcal I^+$, cf. \cite[Lemma A.4]{agks21}. Moreover,
\begin{align*}
	\tau_L(x,y)=\sup\{L(\gamma):\ \gamma:x\curvearrowright y\textrm{ is in }\mathcal C^+\}
\end{align*}
(with the convention $\sup \varnothing =0$) defines a time-separation function which is positive on $\ll$ and satisfies the reverse triangle inequality, cf. \cite[Lemma A.6 and A.7]{agks21}. The induced tuple $(Y,\le,\ll,\tau_L)$ is causally path connected, and satisfies the length condition in Definition \ref{def:lor-len-sp}(1). However $\tau$ is not lower semicontinuous and the push-up property may not hold in general.

\begin{remark}\label{rmk:lor-len-sp-vs-str}
If $(Y,\le,\ll,\tau)$ is a Lorentzian length space and $\mathcal I^+$, $\mathcal C^+$ are the families of future directed time-like and causal curves, respectively, and if $L$ is given by \eqref{eq:tau-length}, then $(\mathcal I^\pm,\mathcal C^\pm,L)$ is a Lorentzian length structure whose induced causal set coincides with $(Y,\le,\ll)$, and $\tau=\tau_L$.
\end{remark}

\subsection{Synthetic time-like Ricci curvature bounds}\label{sec:curv} We define the (weak) TCD$_p$($K,N$)-condition for Lorentzian length structures $Y$ for $p\in (0,1)$, $K\in \R$ and $N\in [1,\infty)$ following the ideas in \cite{cav-mon22,braun23}.

Given two measures $\mu,\nu\in \mathcal P(Y)$, a coupling of $(\mu,\nu)$ is a measure $\bm\pi\in \mathcal P(Y\times Y)$ such that $(p_{1\ast}\bm\pi,p_{2\ast}\bm\pi)=(\mu,\nu)$. A coupling is causal if it is concentrated on $\le$, and their collection is denoted $\Pi_\le(\mu,\nu)$. Causal couplings are closely related with \emph{causal plans}, that is measures $\bm\eta\in \mathcal P(C([0,1],Y))$ concentrated on $\mathcal C^+$: if $\bm\eta$ is a causal plan, then $(e_0,e_1)_\ast\bm\eta$ is a causal coupling of $(e_{0\ast}\bm\eta,e_{1\ast}\bm\eta)$. Here $e_t:C([0,1],Y)\to Y$ is the evaluation map $\gamma\mapsto \gamma_t$ for $t\in [0,1]$. 
\begin{defn}\label{def:KN-convexity}
	A functional $S:\mathcal P(Y)\to [-\infty,\infty]$ is $(K,N)$-convex along a causal plan $\bm\eta$, if the function $t\mapsto u(t)\defeq \exp(-S(e_{t\ast}\bm\eta)/N)$ satisfies
	\begin{align*}
		u(t)\ge \sigma_{K/N}^{1-t}(T_{\bm\eta})u(0)+\sigma_{K/N}^{t}(T_{\bm\eta})u(1),\quad \textrm{where}\quad T_{\bm\eta}=\bigg(\int L(\gamma)^2\ud\bm\eta(\gamma)\bigg)^{1/2}.
	\end{align*}
Equivalently, $u$ is semiconvex and satisfies $\displaystyle u''(t)\le -\frac KNT_{\bm\eta}\,u(t)$ a.e. $t\in [0,1]$. 
\end{defn}
In the definition above we use the auxiliary functions
\begin{align*}
	\sigma_\kappa^t(\theta)\defeq\left\{
	\begin{array}{ll}
		\frac{s_\kappa(t\theta)}{s_\kappa(\theta)} & \kappa\theta^2<0\textrm{ or }\kappa\theta^2\in (0,\pi^2)\\
		t & \kappa\theta^2=0\\
		\infty & \kappa\theta^2\ge \pi^2
	\end{array}
	\right.
	,\quad
	s_\kappa(\theta)\defeq\left\{
	\begin{array}{ll}
		\frac{\sin(\sqrt\kappa \theta)}{\sqrt\kappa} & \kappa>0 \\
		\theta & \kappa= 0 \\
		\frac{\sinh(\sqrt{-\kappa}\theta)}{\sqrt{-\kappa}} & \kappa<0.
	\end{array}
	\right.
\end{align*}

Next we define $p$-optimal time-like plans. Given $p\in (0,\infty)$, define the Lorentz--Wasserstein $p$-distance
\begin{align*}
	\ell_p(\mu,\nu)=\sup\Big\{\Big(\int\tau_L(x,y)\ud\bm\pi(x,y)\Big)^{1/p}:\ \bm\pi\in \Pi_\le(\mu,\nu) \Big\},
\end{align*}
with the convention $\sup\varnothing=0$. Moreover, let ${\rm OpT}(Y)$ denote the family of curves $\gamma\in \mathcal I^+$ such that
\begin{align*}
	\tau(\gamma_s,\gamma_t)=(s-t)L(\gamma),\quad a\le t\le s\le b. 
\end{align*}
Note that, since ${\rm OpT}(Y)\subset \mathcal I\subset AC(Y,d)$, its elements are automatically continuous.
\begin{defn}\label{def:p-optimal-plan}
	A $p$-optimal time-like plan is a causal plan concentrated on ${\rm OpT}(Y)$ such that
\begin{align*}
	\ell_p(e_{0\ast}\bm\eta,e_{1\ast}\bm\eta)^p=\int L(\gamma)^p\ud\bm\eta(\gamma).
\end{align*}
If $e_{0\ast}\bm\eta=\mu$ and $e_{1\ast}\bm\eta=\nu$, $\bm\eta$ is called a $p$-optimal plan for $(\mu,\nu)$, and their collection is denoted ${\rm OpT}_p(\mu,\nu)$.
\end{defn}

\begin{remark}
	Let $p\in (0,1)$. If $Y$ is a ($\mathcal K$-)globally hyperbolic regular Lorentzian length space, then every strongly time-like $p$-dualizable pair $(\mu,\nu)$ of probability measures on $Y$ admits a $p$-optimal time-like plan for $(\mu,\nu)$, see \cite[Lemma 2.13]{braun23}.
\end{remark}

\begin{defn}\label{def:curv}
	Let $(Y,\mathcal I^\pm,\mathcal C^\pm,L)$ be a Lorentzian length structure and $\bm m$ be a Radon measure on $Y$. Let $p\in (0,1)$, $K\in \R$ and $N\in [1,\infty)$. We say that $(Y,\bm m)$ satisfies the wTCD$_p$($K,N$)-condition if, for any $\mu,\nu\in \mathcal P_{c}^{ac}(Y,\bm m)$ with ${\rm OpT}_p(\mu,\nu)\ne\varnothing$, there exists $\bm\eta\in {\rm OpT}_p(\mu,\nu)$ such that ${\rm Ent}_{\bm m}$ is $(K,N)$-convex along $\bm\eta$. 
\end{defn}
\noindent Here, the entropy functional ${\rm Ent}_{\bm m}:\mathcal P(Y)\to [-\infty,\infty]$ is given by
\begin{eqnarray*}
{\rm Ent}_{\bm m}(\mu)=\left\{
\begin{array}{ll}
\displaystyle \int\rho\log\rho\ud\bm m, & \mu=\rho\bm m\textrm{ with }(\rho\log\rho)_+\in L^1(\bm m) \\
\displaystyle +\infty, &\textrm{otherwise}
\end{array}
\right.
\end{eqnarray*}

\section{A generalized product metric}\label{sec:gen-metr}

\subsection{Generalized metric speed}\label{sec:gen_metr_sp}

Let $X$ be a topological space, $I\subset \R$ an interval, and $\mathcal F=\{d_s\}_{s\in I}$ a family of metrics on $X$ satisfying \eqref{eq:cont}.

\begin{defn}
Let $\beta:[a,b]\to X$ be an absolutely continuous curve. We define $v_\beta:I\times [a,b]\to \R$ by
\begin{align*}
	v_\beta(s,t)=\limsup_{h\to 0}\frac 1h\int_{t}^{t+h}|\beta'_\tau|_{d_s}\ud \tau.
\end{align*}
\end{defn}

\begin{lemma}\label{lem:lip_cont}
	Suppose $J\subset I$ and $K\subset X$ are compact. Let $\beta:[a,b]\to K$ be an absolutely continuous curve. There exists $C=C(J,K)$ such that 
	\begin{itemize}
		\item[(1)] for every $s\in I$, $v_\beta(s,t)=|\beta_t'|_{d_s}$ a.e. $t\in [a,b]$;
		\item[(2)] for every compact subinterval $J\subset I$ we have that
		\[
		\frac{v_\beta(s,t)}{v_\beta(s',t)}\le 1+C\omega(|s'-s|),\quad t\in [a,b],\ s,s'\in J,
		\]
		where $\omega=\omega_{J,K}$ is the modulus of continuity in \eqref{eq:cont}.

	\end{itemize}
\end{lemma}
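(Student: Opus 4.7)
The plan is to establish (1) from the Lebesgue differentiation theorem, and to reduce (2) to a pointwise (in $\tau$) comparison of metric speeds coming from \eqref{eq:cont}.

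For (1), I would first observe that absolute continuity of $\beta$ is a metric-independent property among the $d_s\in\mathcal F$ (via their local bi-Lipschitz equivalence, which is implicit in \eqref{eq:cont}), so for each $s\in I$ the metric derivative $|\beta'_\tau|_{d_s}$ exists for a.e.\ $\tau\in[a,b]$ and belongs to $L^1([a,b])$. Applying the Lebesgue differentiation theorem to this $L^1$ function, the $\limsup$ defining $v_\beta(s,t)$ is in fact an actual limit for a.e.\ $t$, with value $|\beta'_t|_{d_s}$.

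For (2), the key intermediate estimate I would prove is the pointwise comparison
\[
|\beta'_\tau|_{d_s}\le e^{\omega(|s-s'|)}\,|\beta'_\tau|_{d_{s'}}\quad\textrm{a.e. }\tau\in[a,b],\ s,s'\in J.
\]
To obtain this I would fix a reference metric $d_{s_0}$ for some $s_0\in J$; since $\beta([a,b])\subset K$ and $\beta$ is uniformly continuous, I can pick $h_0>0$ such that $d_{s_0}(\beta_\tau,\beta_{\tau+h})<r$ for every $\tau\in[a,b]$ and $|h|<h_0$, where $r=r_{J,K}$ is the radius in \eqref{eq:cont}. In particular $\inf_{s\in J}d_s(\beta_\tau,\beta_{\tau+h})<r$, so \eqref{eq:cont} gives $d_s(\beta_\tau,\beta_{\tau+h})\le e^{\omega(|s-s'|)}d_{s'}(\beta_\tau,\beta_{\tau+h})$. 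Dividing by $|h|$ and letting $h\to 0$ at any $\tau$ where both metric derivatives exist delivers the pointwise bound. Integrating this inequality over $[t,t+h]$, dividing by $h$, and taking $\limsup$ as $h\to 0$ on both sides then yields $v_\beta(s,t)\le e^{\omega(|s-s'|)}v_\beta(s',t)$ for every $t\in[a,b]$; the factor $e^{\omega(|s-s'|)}$ is constant in $\tau$ and hence survives the limsup. Finally, since $(e^x-1)/x$ is nondecreasing on $(0,\infty)$, on the compact range $\omega(|s-s'|)\le M:=\omega(\operatorname{diam} J)$ one has $e^{\omega(|s-s'|)}\le 1+C\omega(|s-s'|)$ with $C=(e^M-1)/M$, yielding the claim with $C=C(J,K)$.

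The main obstacle I anticipate is the distinction between pointwise and almost-everywhere statements: the conclusion in (2) must hold for every $t\in[a,b]$, which is precisely why $v_\beta$ is defined via $\limsup$ rather than $\lim$. One must therefore be careful to perform integration \emph{before} taking the $\limsup$ on both sides, so that the uniform multiplicative factor applies simultaneously at every $t$, rather than working with an a.e.\ identity that would fail to give the stated universal bound. The other delicate ingredient is producing a single $h_0$ that works uniformly in $\tau\in[a,b]$; this rests on uniform continuity of $\beta$ on the compact interval $[a,b]$ with values in the compact set $K$, together with the fact that \eqref{eq:cont} is triggered by \emph{any} one of the $d_s$-distances being small.
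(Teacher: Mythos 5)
Your argument is correct and essentially the same as the paper's: part (1) via Lebesgue differentiation of $t\mapsto|\beta_t'|_{d_s}$, and part (2) by applying \eqref{eq:cont} to the nearby points $\beta_\tau,\beta_{\tau+h}$ along the curve and converting $e^{\omega(|s-s'|)}$ into $1+C\omega(|s-s'|)$ on the bounded range $\omega(\operatorname{diam}J)$ (the paper performs this conversion up front, in \eqref{eq:contv2}). The only cosmetic difference is that the paper compares the sub-arc lengths $\ell_{d_s}(\beta|_{[t,t+h]})$ directly before taking the $\limsup$, whereas you first establish the a.e.\ pointwise bound $|\beta'_\tau|_{d_s}\le e^{\omega(|s-s'|)}|\beta'_\tau|_{d_{s'}}$ and then integrate; the two are interchangeable here and your care about integrating before taking the $\limsup$ (so the bound holds for \emph{every} $t$) is exactly the right point.
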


In the proof we use the observation $e^A\le 1+C(D)A$ for $A\in [0,D]$, which implies that \eqref{eq:cont} can be equivalently expressed as
\begin{align}\label{eq:contv2}
	\frac{d_s(x,y)}{d_{s'}(x,y)}\le 1+C\omega_{J,K}(|s-s'|)\quad \textrm{ for }s,s'\in J,\ x,y\in K \textrm{ with } \inf_{s\in J}d_s(x,y)<r,
\end{align}
for some constants $C=C_{J,K},r=r_{J,K}>0$ for each compact $J\subset I$ and $K\subset X$. If $\mathcal F$ satisfies \eqref{eq:lip}, the right hand side of \eqref{eq:contv2} can be replaced by $1+C|s-s'|$. In light of this, Lemma  \ref{lem:lip_cont} has the following immediate corollary.
\begin{cor}\label{cor:lip_cont}
	Suppose $\mathcal F$ satisfies \eqref{eq:lip}. For every compact $K\subset X$ and $J\subset I$, there exists $C=C_{K,J}$ such that
	\begin{align*}
		\frac{v_\beta(s',t)}{v_\beta(s,t)}\le 1+C|s'-s|,\quad t\in [a,b],\ s,s'\in J,
	\end{align*}
	for any absolutely continuous curve $\beta:[a,b]\to K$.
\end{cor}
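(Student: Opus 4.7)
The plan is to deduce the corollary directly from Lemma~\ref{lem:lip_cont}(2) by specializing the modulus of continuity $\omega_{J,K}$. Indeed, the hypothesis \eqref{eq:lip} asserts exactly that, for compact $J\subset I$ and $K\subset X$, there exist constants $C_{J,K}, r_{J,K}>0$ such that
\[
\Big|\log\frac{d_s(x,y)}{d_{s'}(x,y)}\Big|\le C_{J,K}|s-s'|,\quad s,s'\in J,\ x,y\in K,
\]
whenever $\inf_{s\in J}d_s(x,y)<r_{J,K}$. This is precisely condition \eqref{eq:cont} with the linear modulus of continuity $\omega_{J,K}(t)=C_{J,K}t$.

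Next I would apply Lemma~\ref{lem:lip_cont}(2) to the absolutely continuous curve $\beta:[a,b]\to K$ and the compact interval $J\subset I$ with this linear $\omega_{J,K}$. The lemma provides a constant $C=C(J,K)$ such that
\[
\frac{v_\beta(s',t)}{v_\beta(s,t)}\le 1+C\,\omega_{J,K}(|s'-s|)=1+CC_{J,K}|s'-s|,
\]
for all $t\in [a,b]$ and $s,s'\in J$. Setting $C_{K,J}\defeq CC_{J,K}$ yields the asserted estimate.

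No obstacle is expected here: the corollary is really a bookkeeping consequence of Lemma~\ref{lem:lip_cont}(2), using only that \eqref{eq:lip} upgrades the modulus of continuity in \eqref{eq:cont} from a general $\omega$ to a linear one. The one point to be careful about is that the constant $C_{K,J}$ in the statement depends on the pair $(K,J)$ through both the Lipschitz constant in \eqref{eq:lip} and the constant produced by Lemma~\ref{lem:lip_cont}(2); but since both are determined by $(J,K)$ alone (and not by the curve $\beta$), the estimate holds uniformly for all absolutely continuous curves $\beta:[a,b]\to K$ as claimed.
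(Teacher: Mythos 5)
Your proposal is correct and matches the paper's own route: the paper likewise observes that \eqref{eq:lip} is exactly \eqref{eq:cont} with a linear modulus of continuity, so Corollary \ref{cor:lip_cont} is an immediate consequence of Lemma \ref{lem:lip_cont}(2) with $\omega(t)=C_{J,K}t$, absorbing constants as you do.
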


\begin{proof}
The first claim follows from Lebesgue's differentiation theorem since, for fixed $s\in I$, the map $t\mapsto |\beta_t'|_{d_s}$ is $L^1$-integrable by the absolute continuity of $\beta$ with respect to $d_s$.

By \eqref{eq:contv2} we have that 
	\begin{align*}
		\frac{\ell_{d_s}(\beta|_{[t,t+h]})}{h}\le \frac{\ell_{d_{s'}}(\beta|_{[t,t+h]})}{h}(1+C\omega(|s-s'|))
	\end{align*}
and, upon taking limsup as $h\to 0$,
\[
v_\beta(s,t)\le v_\beta(s',t)(1+C\omega(|s-s'|))
\]
for $s,s'\in J$. This proves (2).
\end{proof}

The following change of variables formula for $v_\beta$ will be very useful. 

\begin{lemma}\label{lem:reparam}
	Let $\beta:[a,b]\to X$ be an AC-curve, and $\varphi:[c,d]\to [a,b]$ an absolutely continuous bijection. Then
	\[
	v_{\beta\circ\varphi}(s,t)=|\varphi'(t)|v_\beta(s,\varphi(t))
	\]
	for all $s\in I$ and $t\in [a,b]$ for which $\varphi'(t)$ exists.
\end{lemma}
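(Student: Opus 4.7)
The plan is to combine the chain rule for the metric speed of AC curves with the change of variables formula, and then factor the resulting difference quotient to isolate $\varphi'$. Since $\varphi$ is a continuous bijection between intervals it is strictly monotone; I assume WLOG that $\varphi$ is strictly increasing, the decreasing case being symmetric up to sign. The standard chain rule for metric-valued AC curves gives that $\beta\circ\varphi$ is absolutely continuous and
\[
|(\beta\circ\varphi)_r'|_{d_s}=\varphi'(r)\,|\beta_{\varphi(r)}'|_{d_s}\quad\text{for a.e. }r,
\]
for every fixed $s\in I$. Introducing the cumulative lengths $G_s(u)\defeq\int_a^u|\beta_r'|_{d_s}\,dr$ on $[a,b]$ and $H_s(\tau)\defeq\int_c^\tau|(\beta\circ\varphi)_r'|_{d_s}\,dr$ on $[c,d]$, the change of variables formula for AC monotone maps then yields $H_s(\tau)=G_s(\varphi(\tau))-G_s(a)$, so that
\[
\frac{1}{h}\int_t^{t+h}|(\beta\circ\varphi)_r'|_{d_s}\,dr=\frac{G_s(\varphi(t+h))-G_s(\varphi(t))}{h}.
\]

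Assume first $\varphi'(t)>0$ and set $\Delta(h)\defeq\varphi(t+h)-\varphi(t)$. Factoring,
\[
\frac{G_s(\varphi(t)+\Delta(h))-G_s(\varphi(t))}{h}=\frac{\Delta(h)}{h}\cdot\frac{G_s(\varphi(t)+\Delta(h))-G_s(\varphi(t))}{\Delta(h)}.
\]
The first factor tends to $\varphi'(t)>0$. Since $\Delta$ is continuous with $\Delta(0)=0$ and $\Delta'(0)=\varphi'(t)>0$, it is a homeomorphism from a neighbourhood of $0$ onto a neighbourhood of $0$; the substitution $h'=\Delta(h)$ therefore gives
\[
\limsup_{h\to 0}\frac{G_s(\varphi(t)+\Delta(h))-G_s(\varphi(t))}{\Delta(h)}=\limsup_{h'\to 0}\frac{G_s(\varphi(t)+h')-G_s(\varphi(t))}{h'}=v_\beta(s,\varphi(t)).
\]
Using the elementary identity $\limsup_{h\to 0}(a_h b_h)=L\cdot\limsup_{h\to 0}b_h$, valid when $a_h\to L>0$ and $b_h\ge 0$, yields $v_{\beta\circ\varphi}(s,t)=\varphi'(t)\,v_\beta(s,\varphi(t))=|\varphi'(t)|\,v_\beta(s,\varphi(t))$.

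The remaining case $\varphi'(t)=0$ requires the convention $0\cdot\infty=0$ to interpret the right-hand side. When $v_\beta(s,\varphi(t))<\infty$ the upper Dini derivative of $G_s$ at $\varphi(t)$ is finite, giving $|G_s(\varphi(t)+\Delta(h))-G_s(\varphi(t))|\le C|\Delta(h)|$ for small $|h|$; combined with $\Delta(h)/h\to 0$ this forces $v_{\beta\circ\varphi}(s,t)=0$, as required. The main obstacle I foresee is precisely this degenerate case together with points where $v_\beta(s,\varphi(t))=+\infty$ and $\varphi'(t)=0$ simultaneously: at such points the identity only makes sense under the stated convention, and its verification requires a direct Dini-type bound on $G_s$ rather than the factored form above. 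Away from these exceptional points the argument is essentially bookkeeping, resting on the chain rule, the AC change of variables, and the local invertibility of $\Delta$ at $0$.
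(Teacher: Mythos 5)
Your proof is correct and takes essentially the same route as the paper: both rest on the identity $\int_t^{t+h}|(\beta\circ\varphi)'_r|_{d_s}\,\ud r = G_s(\varphi(t+h))-G_s(\varphi(t))$ (the paper gets it via parametrization invariance of $\ell_{d_s}$ rather than your chain-rule/change-of-variables bookkeeping) and then factor the difference quotient by $\Delta(h)/h$ and pass to the $\limsup$. You are in fact more careful than the paper, which silently skips the case $\varphi'(t)=0$ that you treat via the Dini bound, and you rightly flag the only genuinely delicate point, namely $\varphi'(t)=0$ with $v_\beta(s,\varphi(t))=+\infty$, where the identity is indeterminate and is only used almost everywhere anyway.
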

In particular if $\alpha:[a,b]\to I$ is an absolutely continuous curve we have that
\[
v_{\beta\circ\varphi}(\alpha(t),t)=|\varphi'(t)|v_\beta(\alpha(t),\varphi(t))\quad a.e.\ t.
\]
\begin{proof}
	We may assume that $\varphi$ is increasing. Let $s\in I$ and observe that
\begin{align*}
	v_{\beta\circ\varphi}(s,t)=\limsup_{h\to 0}\frac{\ell_s(\beta|_{[\varphi(t),\varphi(t)+\delta(h)]})}{h},
\end{align*}
where $\delta(h):=\varphi(t+h)-\varphi(t)$. If $\varphi'(t)$ exists we thus have that 
\begin{align*}
	v_{\beta\circ\varphi}(s,t)=\limsup_{h\to 0}\frac{\delta(h)}{h}\frac{\ell_s(\beta|_{[\varphi(t),\varphi(t)+\delta(h)]})}{\delta(h)}=\varphi'(t)\limsup_{\delta\to 0}\frac{\ell_s(\beta|_{[\varphi(t),\varphi(t)+\delta]})}{\delta} =\varphi'(t)v_\beta(s,\varphi(t)).
\end{align*}
\end{proof}

\begin{lemma}\label{lem:weak-conv-gen-speed}
	Suppose a sequence $\gamma_j=(\alpha_j,\beta_j):[a,b]\to I\times X$ of AC-curves converges uniformly to an AC-curve $\gamma=(\alpha,\beta)$. Then
	\begin{align*}
		\int_c^dv_\beta(\alpha(t),t)\ud t\le \liminf_{j\to\infty}\int_c^dv_{\beta_j}(\alpha_j(t),t)\ud t
	\end{align*}
whenever $a\le c\le d\le b$.
\end{lemma}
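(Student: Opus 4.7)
The plan is to reduce the claim to the classical lower semicontinuity of length under uniform convergence, applied to each metric $d_s$ with $s$ \emph{frozen} on subintervals of a fine partition. By the uniform convergence of $\gamma_j \to \gamma$ on the compact interval $[a,b]$, there are compact sets $J \subset I$ and $K \subset X$ containing the images of $\alpha,\alpha_j$ and $\beta,\beta_j$ for all $j$ large. Let $C = C_{J,K}$ and $\omega = \omega_{J,K}$ be as in \eqref{eq:cont}. Note that \eqref{eq:cont} forces the metrics $\{d_s\}_{s\in J}$ to be uniformly bi-Lipschitz equivalent on $K$; in particular the uniform convergence $\beta_j \to \beta$ holds simultaneously in every $d_s$, $s\in J$, and the $d_{s}$-length of each $\beta_j, \beta$ is finite.

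Fix $\epsilon > 0$. Choose $\delta > 0$ small enough that $C\omega(2\delta) < \epsilon$ and, using the uniform continuity of $\alpha$ on $[c,d]$, a partition $c = t_0 < \dots < t_N = d$ with $\sup_{t\in[t_{i-1},t_i]}|\alpha(t) - s_i| < \delta$, where $s_i := \alpha(t_{i-1})$. For $j$ large, $\|\alpha_j-\alpha\|_\infty < \delta$, so $|\alpha_j(t) - s_i| \le 2\delta$ on $[t_{i-1},t_i]$. Lemma \ref{lem:lip_cont}(2) then gives the pointwise bounds
\[
v_\beta(\alpha(t),t) \le (1+\epsilon)\,v_\beta(s_i,t), \qquad v_{\beta_j}(s_i,t) \le (1+\epsilon)\,v_{\beta_j}(\alpha_j(t),t).
\]
Integrating over $[t_{i-1},t_i]$ and invoking Lemma \ref{lem:lip_cont}(1) yields
\[
\int_{t_{i-1}}^{t_i} v_\beta(\alpha(t),t)\,\ud t \le (1+\epsilon)\,\ell_{d_{s_i}}(\beta|_{[t_{i-1},t_i]}), \qquad \ell_{d_{s_i}}(\beta_j|_{[t_{i-1},t_i]}) \le (1+\epsilon)\int_{t_{i-1}}^{t_i} v_{\beta_j}(\alpha_j(t),t)\,\ud t.
\]
The classical lower semicontinuity of length in the fixed metric space $(X,d_{s_i})$ gives $\ell_{d_{s_i}}(\beta|_{[t_{i-1},t_i]}) \le \liminf_j \ell_{d_{s_i}}(\beta_j|_{[t_{i-1},t_i]})$.

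Chaining the three subinterval inequalities, summing over $i = 1, \dots, N$, and applying the finite-sum Fatou inequality $\sum_i \liminf_j a_{i,j} \le \liminf_j \sum_i a_{i,j}$ produces
\[
\int_c^d v_\beta(\alpha(t),t)\,\ud t \le (1+\epsilon)^2 \liminf_{j\to\infty}\int_c^d v_{\beta_j}(\alpha_j(t),t)\,\ud t,
\]
and letting $\epsilon \to 0$ finishes the proof. The main obstacle is the \emph{double approximation} required in the partitioning step: the partition must be fine enough that the common anchor $s_i = \alpha(t_{i-1})$ sits within a single modulus-controlled neighborhood of \emph{both} $\alpha(t)$ and $\alpha_j(t)$ on $[t_{i-1},t_i]$, uniformly in large $j$. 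Once this alignment is in place the problem decouples into a finite family of frozen-metric length problems, each handled by the standard lower semicontinuity of length.
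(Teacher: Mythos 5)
Your proposal is correct and follows essentially the same route as the paper: partition $[c,d]$ finely, freeze the metric parameter at an anchor $s_i$ via Lemma \ref{lem:lip_cont}(2), invoke lower semicontinuity of length in the fixed metric $d_{s_i}$ under uniform convergence, and sum using superadditivity of $\liminf$ before letting $\epsilon\to 0$. The only differences are cosmetic (anchoring at $t_{i-1}$ instead of the right endpoint, and making explicit the identification with $\ell_{d_{s_i}}$ and the finite-sum Fatou step that the paper leaves implicit).
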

\begin{proof} 
	By considering the restriction to $[c,d]$ we may assume that $c=a$ and $d=b$. Fix $\varepsilon>0$. By uniform convergence $(\alpha_j)$ is equicontinuous and thus there exists $\delta>0$ such that \linebreak $\sup_{j}|\alpha_j(t)-\alpha_j(s)|\le \varepsilon$ whenever $|t-s|<\delta$. Let $a=a_0<\cdots <a_M=b$ be a partition of $[a,b]$ such that $a_i-a_{i-1}<\delta$, $i=1,\ldots, M$. For each $i$ we have by Lemma \ref{lem:lip_cont}(2) that 
	\begin{align*}
	\int_{a_{i-1}}^{a_i} v_\beta(\alpha(t),t)\ud t &\le (1+\omega(\varepsilon))	\int_{a_{i-1}}^{a_i}v_{\beta}(\alpha(a_i),t)\ud t\le (1+\omega(\varepsilon))\liminf_{j\to\infty} \int_{a_{i-1}}^{a_i}v_{\beta_j}(\alpha(a_i),t)\ud t \\
	&\le (1+\omega(\varepsilon))^2\liminf_{j\to\infty} \int_{a_{i-1}}^{a_i}v_{\beta_j}(\alpha_j(t),t)\ud t
	\end{align*}
Summing over $i$ and taking the limit $\varepsilon\to 0$ we have the estimate
\[
\int_a^bv_\beta(\alpha(t),t)\ud t\le \liminf_{j\to\infty}\int_a^bv_{\beta_j}(\alpha_j(t),t)\ud t, 
\]
as claimed.
\end{proof}

\subsection{Generalized product metric}\label{sec:gen-prod-metr}

We define the generalized product metric $d_{\mathcal F,h}$ on $I\times X$ as the length metric arising from the length functional
\begin{align*}
L_{\mathcal F,h}(\gamma)=\int_a^b\sqrt{h(\gamma_t)\alpha'(t)^2+v_\beta(\alpha_t,t)^2}\ud t,\quad \gamma=(\alpha,\beta):[a,b]\to I\times X,
\end{align*}
see Section \ref{sec:preli-metric}. We note that $d_{\mathcal F,h}$ generates the product topology on $I\times X$, and $(I\times X,d_{\mathcal F,h})$ is therefore a locally compact length space. 
Note that, if $J\subset I$ is a subinterval (we will mostly consider compact subintervals) and $\mathcal F_J=\{d_s\}_{s\in J}$, then $(J\times X,d_{\mathcal F_J,h})$ is homeomorphic with $(J\times X,d_{\mathcal F,h})$ but the metrics need not coincide.

Another convenient choice of background metric on $I\times X$ in what follows will be the conformally weighted metric $(d_{\mathcal F,h})_{h\inv}$, see Section \ref{sec:preli-metric}. We have that  $(d_{\mathcal F,h})_{h\inv}$ generates the product topology, and a curve $\gamma=(\alpha,\beta):[a,b]\to I\times X$ is in $AC(X,d_{\mathcal F,h})$ if and only if it is in $AC(X,(d_{\mathcal F,h})_{h\inv})$, and in this case
\begin{align*}
	|\gamma_t'|_{(d_{\mathcal F,h})_{h\inv}}=\frac{|\gamma_t'|_{d_{\mathcal F,h}}}{h(\gamma_t)}=\sqrt{\alpha'(t)^2+\frac{v_\beta(\alpha_t,s)^2}{h(\gamma_t)^2} }\quad a.e.\ t\in [a,b].
\end{align*}
If not explicitly stated otherwise, we tacitly equip $I\times X$ with the metric $(d_{\mathcal F,h})_{h\inv}$.

\section{The Lorentzian length structure on $hI\times_{\mathcal{F}}X$}\label{sec:lor-len-st}

\subsection{Causal structure}\label{sec:causal_structure}

Let $X$ and $\mathcal F$ be as in Section \ref{sec:gen_metr_sp} -- we assume in particular that $\mathcal F$ satisfies \eqref{eq:cont} -- and let $h:I\times X\to (0,\infty)$ be continuous. Throughout this subsection we denote $Y=I\times X$.

\subsubsection*{Definitions}
An absolutely continuous curve $\gamma=(\alpha,\beta):[a,b]\to Y$ is said to be \emph{future directed} if $\alpha'>0$ a.e. and \emph{past directed} if $\alpha'<0$ a.e.
\begin{defn}\label{def:causal_and_timelike}
	Let $\gamma=(\alpha,\beta):[a,b]\to Y$ be a future or past directed curve. We say that $\gamma$ is
	\begin{itemize}
		\item[(a)]  {\bf causal} if $v_\beta(\alpha_t,t)^2\le h(\gamma_t)^2\alpha'(t)^2$ a.e. $t\in [a,b]$;
		\item[(b)] {\bf timelike} if $v_\beta(\alpha_t,t)^2< h(\gamma_t)^2\alpha'(t)^2$ a.e. $t\in [a,b]$;
	\end{itemize}
\end{defn}
We also say that $\gamma$ is a \emph{null curve} if $v_\beta(\alpha_t,t)^2= h(\gamma_t)\alpha'(t)^2$ a.e. $t\in [a,b]$. Note that by Lemma \ref{lem:reparam} the notions above are invariant under absolutely continuous orientation preserving reparametrizations. Given a causal curve $\gamma=(\alpha,\beta):[a,b]\to Y$ recall its {\bf Lorentzian length} given by
\begin{align}\label{eq:lorentzian_length}
	\tau(\gamma)=\int_a^b\sqrt{h(\gamma(t))^2\alpha'(t)^2-v_\beta(\alpha(t),t)^2}\ud t.
\end{align}
Lorentzian length is independent of parametrization (cf. Lemma \ref{lem:reparam}) and additive under concatenation of (causal) curves.
\begin{remark}\label{rmk:causal-curve-length-bd}
	If $\gamma=(\alpha,\beta):[a,b]\to hI\times_\mathcal FX$ is a (future or past directed) causal curve, then 
	\begin{align*}
		\ell_{(d_{\mathcal F,h})_{h\inv}}(\gamma)\le \sqrt 2|\alpha_b-\alpha_a|.
	\end{align*}
	Indeed, 
	\begin{align*}
		\ell_{(d_{\mathcal F,h})_{h\inv}}(\gamma)=\int_a^b\sqrt{\alpha'(t)^2+\frac{v_\beta(\alpha_t,t)^2}{h(\gamma_t)^2}}\ud t\le \sqrt 2\int_a^b|\alpha'(t)|\ud t=\sqrt 2|\alpha_b-\alpha_a|.
	\end{align*}
\end{remark}

We now define the causal structure and time separation function on $Y$.

\begin{defn}\label{def:causal_structure}
	Let $p,q\in Y$. We say that $p$ and $q$ are
	\begin{itemize}
		\item[(a)] {\bf causally} related, denoted $p\le q$, if there exists a future directed causal curve $\gamma:p\curvearrowright q$;
		\item[(b)] {\bf timelike} related, denoted $p\ll q$, if there exists a future directed timelike curve $\gamma:p\curvearrowright q$;
	\end{itemize}
\end{defn}
A standard argument yields that $\le$ is a preorder and $\ll$ is a transitive relation contained in $\le$, so that $(Y,\ll,\le)$ is a \emph{causal set}, cf. \cite[Definition 2.1]{kun-sam18}. The causal and timelike \emph{future} and \emph{past} of a point $p\in Y$ are denoted
\begin{align*}
	I^+(p)=\{q\in Y:\ p\ll q\},\quad J^+(p)=\{q\in Y:\ p\le q\}\\
	I^-(p)=\{q\in Y: q\ll p\},\quad J^-(p)=\{q\in Y: q\le p\},
\end{align*}
and the chronological and causal diamonds $I^\pm (p,q)\defeq I^+(p)\cap I^-(q)$, $J^\pm (p,q)\defeq J^+(p)\cap J^-(q)$. The time separation function is defined in a standard way.
\begin{defn}\label{def:time_separation_function}
	The {\bf time separation function} $\tau:Y\times Y\to \R$ is defined as 
	\begin{align*}
		\tau(p,q)=\sup_{\gamma:p\curvearrowright q}\tau(\gamma),
	\end{align*}
	where the supremum is taken over all causal curves joining $p$ and $q$ (with the convention $\sup \varnothing=0$).
\end{defn}
\begin{remark}\label{rmk:pre_pre_lorentz_length_sp}
	The time separation function satisfies the following two standard properties:
	\begin{itemize}
		\item[(1)] If $p\ll q$ then $\tau(p,q)>0$;
		\item[(2)] If $p\le z\le q$ then $\tau(p,z)+\tau(z,q)\le \tau(p,q)$. 
	\end{itemize}
	However we do not know at this stage whether $\tau$ is lower semicontinuous. We will establish this in the next section assuming \eqref{eq:lip}.
\end{remark}

\subsection{Basic properties of causal curves}\label{sec:causal curves}

We first establish the stability of causal curves and the upper semicontinuity of Lorentzian length.

\begin{prop}\label{prop:causality_stable}
	Suppose $\gamma_j=(\alpha_j,\beta_j):[a,b]\to Y$ are causal curves converging uniformly to an AC-curve $\gamma=(\alpha,\beta):[a,b]\to Y$. If  $\alpha'>0$ (or $<0$) then $\gamma$ is a causal curve.
\end{prop}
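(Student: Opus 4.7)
I would treat the future-directed case ($\alpha'>0$ a.e.); the past-directed case is symmetric. Since each $\gamma_j$ is causal and future-directed, I have the pointwise inequality
\[
v_{\beta_j}(\alpha_j(t),t)\le h(\gamma_j(t))\alpha_j'(t)\quad\text{a.e. }t\in[a,b],
\]
obtained by taking the (nonnegative) square root of the defining inequality. The strategy is to integrate this over an arbitrary subinterval $[c,d]\subset[a,b]$, pass to the limit on each side separately, and then invoke Lebesgue differentiation to recover the a.e. pointwise causality bound for the limit $\gamma$.

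For the left-hand side, Lemma~\ref{lem:weak-conv-gen-speed} applies directly to the uniformly convergent sequence $\gamma_j=(\alpha_j,\beta_j)\to(\alpha,\beta)$ and yields
\[
\int_c^d v_\beta(\alpha(t),t)\,\ud t \le \liminf_{j\to\infty}\int_c^d v_{\beta_j}(\alpha_j(t),t)\,\ud t.
\]
Combined with the integrated causality inequality, it remains to show
\[
\lim_{j\to\infty}\int_c^d h(\gamma_j(t))\,\alpha_j'(t)\,\ud t=\int_c^d h(\gamma(t))\,\alpha'(t)\,\ud t.
\]

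For this limit I would split the difference as
\[
\int_c^d\bigl[h(\gamma_j(t))-h(\gamma(t))\bigr]\alpha_j'(t)\,\ud t+\int_c^d h(\gamma(t))\bigl[\alpha_j'(t)\,\ud t - \alpha'(t)\,\ud t\bigr].
\]
The first term is bounded by $\|h\circ\gamma_j-h\circ\gamma\|_\infty\cdot(\alpha_j(d)-\alpha_j(c))$; this tends to zero because $h$ is uniformly continuous on a compact neighbourhood of the image of $\gamma$ (which contains the images of $\gamma_j$ for large $j$), and the total variation $\alpha_j(d)-\alpha_j(c)$ stays bounded by uniform convergence. The second term I would treat as an integral of the continuous function $h\circ\gamma$ against the Stieltjes measures $\ud\alpha_j$; since $\alpha_j$ are monotone and converge uniformly to $\alpha$, these measures converge weakly$^{\ast}$ to $\ud\alpha$ on $[c,d]$ (a standard Helly-type argument), so the term vanishes in the limit.

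Putting the two sides together gives
\[
\int_c^d v_\beta(\alpha(t),t)\,\ud t\le \int_c^d h(\gamma(t))\,\alpha'(t)\,\ud t
\]
for every subinterval $[c,d]\subset[a,b]$, and Lebesgue differentiation then yields $v_\beta(\alpha(t),t)\le h(\gamma(t))\alpha'(t)$ a.e., i.e.\ $\gamma$ is causal. The main obstacle I anticipate is the convergence of the right-hand side: the derivatives $\alpha_j'$ need not converge pointwise or in $L^1$, and the hypothesis $\alpha_j'>0$ (monotonicity) is precisely what salvages the argument by converting the derivatives into Radon measures with a stable weak$^\ast$ limit.
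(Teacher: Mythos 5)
Your proposal is correct and follows essentially the same route as the paper: integrate the causality inequality over subintervals, handle the left side with Lemma~\ref{lem:weak-conv-gen-speed}, handle the right side using uniform continuity of $h$ together with weak$^\ast$ convergence of the monotone derivatives $\alpha_j'(t)\,\ud t\to\alpha'(t)\,\ud t$ as measures, and conclude by Lebesgue differentiation. The only difference is cosmetic bookkeeping (you split the difference into two terms, the paper absorbs the $h$-error into an $\varepsilon$ before passing to the measure limit).
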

\begin{proof}
Without loss of generality we assume that $\alpha'>0$ a.e. We will prove the following equivalent form of the causality condition:
\begin{align}\label{eq:equiv-causal}
\int_{a'}^{b'}v_\beta(\alpha(\tau),\tau)\ud \tau\le \int_{a'}^{b'}h(\gamma(\tau))\alpha'(\tau)\ud \tau,\quad a\le a'<b'\le b.
\end{align}
By Lemma \ref{lem:weak-conv-gen-speed} we have that
\[
\int_{a'}^{b'}v_\beta(\alpha(\tau),\tau)\ud \tau\le\liminf_{j\to\infty}\int_{a'}^{b'}v_{\beta_j}(\alpha_j(\tau),\tau)\ud \tau.
\]
Fix $\varepsilon>0$. The causality of $\gamma_j$ yields
\begin{align*}
	\int_{a'}^{b'}v_{\beta_j}(\alpha_j(\tau),\tau)\ud \tau \le \int_{a'}^{b'}h(\gamma_j(\tau))\alpha_j'(\tau)\ud \tau\le \int_{a'}^{b'}(h(\gamma(\tau))+\varepsilon)\alpha_j'(\tau)\ud \tau
\end{align*}
for large enough $j$. Since $\alpha_j'(t)\ud t\to \alpha'(t)\ud t$ weakly as measures we obtain that 
\[
\int_{a'}^{b'}(h(\gamma(\tau))+\varepsilon)\alpha_j'(\tau)\ud \tau\to \int_{a'}^{b'}(h(\gamma(\tau))+\varepsilon)\alpha'(\tau)\ud \tau.
\]
Letting $\varepsilon\to 0$ we obtain \eqref{eq:equiv-causal}.
\end{proof}

\begin{prop}\label{prop:lorentz_usc}
Lorentzian length is upper semicontinuous, i.e. if $\gamma_j$ is a sequence of causal curves converging uniformly to a causal curve $\gamma$ then
\[
\limsup_{j\to\infty}\tau(\gamma_j) \le\tau(\gamma).
\]
\end{prop}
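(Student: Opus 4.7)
The proof would rest on an integrated form of the reverse Minkowski inequality on the Lorentzian forward cone, namely
\[
\int_E\sqrt{A^2-B^2}\,dt \le \sqrt{\Big(\int_E A\,dt\Big)^2-\Big(\int_E B\,dt\Big)^2},\qquad A\ge B\ge 0,
\]
combined with the lower semicontinuity of the spatial speed (Lemma~\ref{lem:weak-conv-gen-speed}) and weak convergence of the monotone ``time derivatives'' $\alpha_j'$.  The algebraic fact underlying the displayed inequality is that $(a,b)\mapsto\sqrt{a^2-b^2}$ is superadditive on $\{a\ge b\ge 0\}$; applying this with $A=h(\gamma_j)\alpha_j'$, $B=v_{\beta_j}(\alpha_j,\cdot)$ splits the problem into controlling the two ``linear'' integrands separately.

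\textbf{Main steps.}  I would fix a partition $a=t_0<\cdots<t_N=b$ and apply the above inequality on each subinterval.  Since each $\alpha_j$ is non-decreasing (future-directedness) and $\alpha_j\to\alpha$ uniformly, the positive Radon measures $\alpha_j'\,dt$ converge weakly to $\alpha'\,dt$; together with the uniform convergence $h\circ\gamma_j\to h\circ\gamma$, this gives $\int_{t_{i-1}}^{t_i} h(\gamma_j)\alpha_j'\,dt\to\int_{t_{i-1}}^{t_i} h(\gamma)\alpha'\,dt$.  Lemma~\ref{lem:weak-conv-gen-speed} furnishes $\liminf_j\int_{t_{i-1}}^{t_i} v_{\beta_j}(\alpha_j,\cdot)\,dt\ge\int_{t_{i-1}}^{t_i} v_\beta(\alpha,\cdot)\,dt$.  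Extracting a subsequence realizing $\limsup_j\tau(\gamma_j|_{[t_{i-1},t_i]})$ and a further subsequence on which $\int v_{\beta_j}(\alpha_j,\cdot)\,dt$ converges (uniformly bounded by causality and Remark~\ref{rmk:causal-curve-length-bd}), and using that $(X,Y)\mapsto \sqrt{X^2-Y^2}$ is increasing in $X$ and decreasing in $Y$ on $\{X\ge Y\ge 0\}$, I obtain
\[
\limsup_{j\to\infty}\tau(\gamma_j|_{[t_{i-1},t_i]}) \le \sqrt{\Big(\int_{t_{i-1}}^{t_i}h(\gamma)\alpha'\,dt\Big)^2-\Big(\int_{t_{i-1}}^{t_i}v_\beta(\alpha,\cdot)\,dt\Big)^2}.
\]
Subadditivity of $\limsup$ over the finitely many pieces then gives an analogous bound on $\limsup_j\tau(\gamma_j)$.

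\textbf{Refinement and main obstacle.}  It remains to let the mesh of the partition tend to zero.  Writing $A=h(\gamma)\alpha'$, $B=v_\beta(\alpha,\cdot)$ and denoting by $A_P, B_P$ the piecewise-constant averages of $A,B$ over the partition intervals, the partition-level bound takes the form $\int_a^b\sqrt{A_P^2-B_P^2}\,dt$; Lebesgue differentiation yields $(A_P,B_P)\to(A,B)$ a.e. and in $L^1$, and the pointwise domination $\sqrt{A_P^2-B_P^2}\le A_P$ combined with $L^1$-convergence gives (via Vitali's theorem) convergence of the sum to $\int_a^b\sqrt{A^2-B^2}\,dt=\tau(\gamma)$.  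I expect this last refinement step to require the most care: the reverse Minkowski estimate is sharp only in the limit of fine partitions, and it is here that the ``weak'' information obtained at the level of each subinterval is sharpened into precise control of the Lorentzian length of the limit curve.
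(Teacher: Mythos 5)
Your argument is correct, and it takes a genuinely different route from the paper. The paper first reparametrizes each $\gamma_j$ so that its time component is affine, then extracts weakly $L^2$-convergent subsequences of the spatial speeds $g_j=v_{\beta_j}(c_j\cdot,\cdot)$ and of the Lorentzian densities $f_j=\sqrt{c_j^2h^2-g_j^2}$, and uses weak lower semicontinuity of the $L^2$-norm on arbitrary subintervals (plus Lemma \ref{lem:weak-conv-gen-speed}) to obtain the pointwise a.e.\ inequalities $v_\beta\le g$ and $g^2+f^2\le c^2h^2$, whence $\tau(\gamma)\ge\int_0^1 f=\lim_j\int_0^1 f_j=\lim_j\tau(\gamma_j)$. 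You instead stay with the given parametrization and work with partitions: the concavity and $1$-homogeneity of $(a,b)\mapsto\sqrt{a^2-b^2}$ on the cone $\{a\ge b\ge0\}$ gives your integrated reverse Minkowski bound on each subinterval, the two linear quantities are controlled exactly as in Proposition \ref{prop:causality_stable} (weak-$*$ convergence of $\alpha_j'\,\ud t$ together with uniform convergence of $h\circ\gamma_j$) and by Lemma \ref{lem:weak-conv-gen-speed}, and the partition is then removed by Lebesgue differentiation, Scheff\'e, and domination $\sqrt{A_P^2-B_P^2}\le A_P$ (Pratt/Vitali). What your route buys is the avoidance both of Hilbert-space weak compactness and of the reparametrization step (which in the paper requires a small extra argument to identify the limit of the time-affine reparametrizations as a reparametrization of $\gamma$); what it costs is precisely the mesh-refinement limit you flag, which your domination argument handles correctly — note that $A\ge B$ a.e.\ (hence $A_P\ge B_P$) is exactly the assumed causality of the limit curve, and that only minor standard points remain implicit (for large $j$ all $\gamma_j$ share the orientation of $\gamma$, and $\alpha'\,\ud t$ has no atoms at partition points, so the interval-wise weak convergence is legitimate). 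In effect, your ``since the mesh is arbitrary'' step plays the role of the paper's ``since $a,b$ are arbitrary'' passage from integral to pointwise inequalities; the two proofs rest on the same two convergence inputs but package the concavity of the Lorentzian length element differently.
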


\begin{proof}
	By passing to a subsequence we may assume that $\limsup_{j\to\infty}\tau(\gamma_j)$ is a limit. By the invarience of reparametrization we may assume that $\gamma_j(t)=(c_jt,\beta_j(t))$, $t\in [0,1]$, and that $c_j\to c$, $\beta_j\to \beta$ (uniformly) and
	\[\tau(\gamma)=\int_0^1\sqrt{c^2h(ct,\beta(t))^2-v_{\beta}(ct,t)^2}\ud t.\]
	Since $v_{\beta_j}(c_jt,t)\le c_jh(c_jt,\beta_j(t))$ a.e. we may pass to a (further) subsequence such that $g_j\defeq v_{\beta_j}(c_j\cdot,\cdot)$ and $f_j\defeq \sqrt{c_j^2h(c_j\cdot,\beta_j(\cdot))^2-g_j^2}$ converge weakly in $L^2([0,1])$ to $g$  and $f$, respectively. The lower semicontinuity of the $L^2$-norm with respect to weak convergence implies that, for every $0\le a<b\le 1$ we have 
	\[
	\int_a^bg^2\ud t\le \liminf_{j\to \infty}\int_a^bg_j^2\ud t,\quad 	\int_a^bf^2\ud t\le \liminf_{j\to \infty}\int_a^bf_j^2\ud t
	\]
	and thus
	\[
	\int_a^b(g^2+f^2)\ud t\le \liminf_{j\to\infty}\int_a^b(g_j^2+f_j^2)\ud t=c^2\int_a^bh(ct,\beta(t))^2\ud t,
	\]
	By Lemma \ref{lem:weak-conv-gen-speed} we have that
	\[
	\int_a^bv_\beta(ct,t)\ud t\le\liminf_{j\to\infty}\int_a^bv_{\beta_j}(c_jt,t)\ud t=\liminf_{j\to\infty}\int_a^bg_j\ud t=\int_a^bg\ud t.
	\]
	Since $a,b$ are arbitrary it follows that
	\[
	v_\beta(ct,t)^2+h(t)^2\le g(t)^2+f(t)^2\le c^2h(ct,\beta(t))^2\quad a.e.\ t\in [0,1].
	\]
	From this we obtain that
	\begin{align*}
		\tau(\gamma)=\int_0^1\sqrt{c^2h(ct,\beta(t))^2-v_\beta(ct,t)^2}\ud t\ge \int_0^1h\ud t=\lim_{j\to \infty}\int_0^1h_j\ud t=\lim_{j\to\infty}\tau(\gamma_j).
	\end{align*}
\end{proof}
We single out the following corollary of Propositions \ref{prop:causality_stable} and \ref{prop:lorentz_usc}.

\begin{cor}\label{cor:loc-comp-causal-diamond}
	\begin{itemize}
		\item[(a)]  Assume $p,q\in I\times X$ and $J(p,q)$ is contained in a compact set. Then $J(p,q)$  is compact and, for every $p',q'\in J(p,q)$, there exists a maximizing causal curve $\gamma:p'\curvearrowright q'$.
		\item[(b)] Let $p_0\in I\times X$ and $h>0$. If $\overline B_{(d_{\mathcal F,h})_{h\inv}}(p_0,4h)$ is compact, then $J(p,q)$ is compact for every $p,q\in \overline B_{(d_{\mathcal F,h})_{h\inv}}(p_0,h)$.
	\end{itemize}
\end{cor}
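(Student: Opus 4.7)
The plan is to combine Remark~\ref{rmk:causal-curve-length-bd} with Propositions~\ref{prop:causality_stable} and \ref{prop:lorentz_usc} via a standard Arzel\`a--Ascoli argument. The central observation I will rely on is that a future-directed causal curve $\gamma=(\alpha,\beta):[a,b]\to hI\times_\mathcal F X$ with $\alpha_a<\alpha_b$, parametrized with constant weighted speed on $[0,1]$, is $\sqrt 2(\alpha_b-\alpha_a)$-Lipschitz (by Remark~\ref{rmk:causal-curve-length-bd}) and satisfies a uniform positive lower bound on its time-derivative: the causality inequality $v_\beta^2\le h^2\alpha'^2$ yields $|\gamma'|^2=\alpha'^2+v_\beta^2/h^2\le 2\alpha'^2$, whence $\alpha'\ge \ell_{(d_{\mathcal F,h})_{h\inv}}(\gamma)/\sqrt 2\ge (\alpha_b-\alpha_a)/\sqrt 2$ a.e.

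For part~(a) I would assume $p\ne q$ (else the statement is trivial). Given a sequence $z_n\in J(p,q)$, I would concatenate future-directed causal curves $p\curvearrowright z_n\curvearrowright q$ into causal curves $\gamma_n:p\curvearrowright q$ passing through $z_n$. By transitivity of $\le$, these curves lie in $J(p,q)$, hence in the given compact set. Parametrizing them on $[0,1]$ with constant weighted speed yields an equi-Lipschitz family to which Arzel\`a--Ascoli applies; a subsequence converges uniformly to an AC curve $\gamma$, and the time-derivatives satisfy the uniform lower bound above, which is preserved under uniform convergence (via passage to the limit in $\alpha_n(t)-\alpha_n(s)\ge c(t-s)$). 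Proposition~\ref{prop:causality_stable} then gives that $\gamma:p\curvearrowright q$ is causal, and a further subsequence of $z_n$ converges to a point of $\gamma$, which lies in $J(p,q)$---establishing compactness. For the existence of maximizers between $p',q'\in J(p,q)$, I would apply the same Arzel\`a--Ascoli argument to a maximizing sequence of causal curves (all of which lie in $J(p,q)$ by transitivity), and combine it with Proposition~\ref{prop:lorentz_usc} to extract a maximizer.

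For part~(b) it suffices, by part~(a), to show $J(p,q)\subset \overline B_{(d_{\mathcal F,h})_{h\inv}}(p_0,4h)$ whenever $p,q\in \overline B_{(d_{\mathcal F,h})_{h\inv}}(p_0,h)$. Two elementary estimates will give this: first, the projection $(\alpha,\beta)\mapsto\alpha$ is $1$-Lipschitz in the weighted metric (since $|\alpha'|\le|\gamma'|_{(d_{\mathcal F,h})_{h\inv}}$ for every AC curve), so $|\alpha_q-\alpha_p|\le d(p,q)\le 2h$; second, Remark~\ref{rmk:causal-curve-length-bd} gives $d(p,z)\le\sqrt 2|\alpha_z-\alpha_p|\le 2\sqrt 2\,h$ for any $z\in J(p,q)$, using $\alpha_p\le\alpha_z\le\alpha_q$. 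Combining, $d(p_0,z)\le h+2\sqrt 2\,h<4h$ since $2\sqrt 2<3$. The only technical subtlety in the whole argument is verifying the hypothesis $\alpha'>0$ of Proposition~\ref{prop:causality_stable} for the limiting curve in (a); the constant-speed parametrization was chosen precisely to supply the needed uniform lower bound.
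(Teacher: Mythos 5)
Your argument is correct and follows essentially the same route as the paper: the uniform length bound of Remark \ref{rmk:causal-curve-length-bd} plus an Arzel\`a--Ascoli limit of causal curves, with Proposition \ref{prop:causality_stable} giving causality of the limit (the paper uses affine-time parametrizations where you use constant weighted speed to secure $\alpha'>0$), Proposition \ref{prop:lorentz_usc} for maximizers, and the identical $(1+2\sqrt 2)h<4h$ triangle estimate for (b).
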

\begin{proof}
	It follows from Remark \ref{rmk:causal-curve-length-bd} that
	\[
	(d_{\mathcal F,h})_{h\inv}(p,q')+(d_{\mathcal F,h})_{h\inv}(q',q)\le \sqrt 2(b-a)
	\]
	for any $q'\in J(p,q)$, whenever $p=(a,x)$ and $q=(b,y)$ are causally related. Assume $J(p,q)$ is contained in a compact set. To show that $J(p,q)$ is compact it suffices to show it is closed. If $(q_j')\subset J(p,q)$ converges to $q'$, let $\gamma_j^p:p\curvearrowright q_j'$ and $\gamma_j^q:q_j'\curvearrowright q$ be causal curves parametrized with affine first component. 
	We have that the curves $\gamma_j^p$ and $\gamma_j^q$ lie inside a compact set and have uniformly bounded length. Thus the sequences converge up to a subsequence to causal curves $\gamma^p:p\curvearrowright q'$ and $\gamma_q:q'\curvearrowright q$, respectively. This shows that $q'\in J(p,q)$, and thus $J(p,q)$ is closed. Moreover, if $p',q'\in J(p,q)$ and $\gamma_j:p'\curvearrowright q'$ is a sequence of causal curves with $\tau(\gamma_j)\to \tau(p',q')$ for $p',q'\in J(p,q)$, then up to reparametrization a subsequence of $(\gamma_j)$ converges to a causal curve $\gamma:p'\curvearrowright q'$ satisfying $\tau(p',q')=\lim_{j\to\infty}\tau(\gamma_j)\le \tau(\gamma)$. Thus $\gamma$ is a maximizing curve. We have proved (a).
	
	To prove (b) note that, if $p,q\in \overline B_{(d_{\mathcal F,h})_{h\inv}}(p_0,h)$, then $|b-a|\le 2h$, and thus
	\[
	(d_{\mathcal F,h})_{h\inv}(p_0,q')\le (d_{\mathcal F,h})_{h\inv}(p_0,p)+(d_{\mathcal F,h})_{h\inv}(p,q')\le (1+2\sqrt 2)h<4h.
	\]
	This shows that $J(p,q)\subset B_{(d_{\mathcal F,h})_{h\inv}}(p_0,4h)$ for all $p,q\in \overline B_{(d_{\mathcal F,h})_{h\inv}}(p_0,h)$. By (a) $J(p,q)$ is compact.
\end{proof}

\subsubsection*{Variational length of causal curves}
Recall the Lorentzian length \eqref{eq:tau-length} associated to the time separation function $\tau$.

Our next proposition states that it agrees with \eqref{eq:lorentzian_length}.
\begin{prop}\label{prop:lor=var}
	Let $\gamma:[a,b]\to Y$ be a causal curve. Then $L_\tau(\gamma)=\tau(\gamma).$
\end{prop}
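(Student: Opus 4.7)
My plan is to establish the equality in two steps. For the lower bound $L_\tau(\gamma)\ge\tau(\gamma)$: for any partition $a=t_0<\cdots<t_N=b$, the restriction $\gamma|_{[t_{i-1},t_i]}$ is itself a causal curve from $\gamma_{t_{i-1}}$ to $\gamma_{t_i}$, so $\tau(\gamma_{t_{i-1}},\gamma_{t_i})\ge \tau(\gamma|_{[t_{i-1},t_i]})$. Summing and using that the integral \eqref{eq:lorentzian_length} is additive under concatenation gives $\sum_i\tau(\gamma_{t_{i-1}},\gamma_{t_i})\ge \tau(\gamma)$, and taking the infimum over partitions yields $L_\tau(\gamma)\ge\tau(\gamma)$.

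For the reverse inequality I would use upper semicontinuity of Lorentzian length (Proposition \ref{prop:lorentz_usc}) combined with an approximation of $\gamma$ by concatenations of near-maximizers. After reparametrizing so that the first coordinate of $\gamma$ is the identity on $[a,b]$ (via Lemma \ref{lem:reparam}, using $\alpha'>0$ a.e.), choose partitions $\mathcal P_n=\{a=t^n_0<\cdots<t^n_{N_n}=b\}$ with $|\mathcal P_n|\to 0$. For each $n$ and $i$ pick a causal curve $\sigma_{n,i}\co p^n_{i-1}\curvearrowright p^n_i$, where $p^n_i=\gamma_{t^n_i}$, such that $\tau(\sigma_{n,i})\ge \tau(p^n_{i-1},p^n_i)-\tfrac{1}{nN_n}$; this is possible because $\tau(p^n_{i-1},p^n_i)\le H|\mathcal P_n|<\infty$, where $H$ is a bound of $h$ on a compact neighbourhood of $\gamma([a,b])$. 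Reparametrize each $\sigma_{n,i}$ so its first component is the identity on $[t^n_{i-1},t^n_i]$ and concatenate them to obtain a causal curve $\sigma_n\co [a,b]\to Y$ with $\alpha_{\sigma_n}=\id$ satisfying
\[
\tau(\sigma_n)=\sum_i\tau(\sigma_{n,i})\ge \sum_i\tau(p^n_{i-1},p^n_i)-\tfrac1n.
\]

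The crux is to verify $\sigma_n\to \gamma$ uniformly. Both $\sigma_n|_{[t^n_{i-1},t^n_i]}$ and $\gamma|_{[t^n_{i-1},t^n_i]}$ start at $p^n_{i-1}$, share the identity first coordinate, and have generalized spatial speed bounded by $H$ through the causality condition. Combining this with Lemma \ref{lem:lip_cont}(2) yields
\[
d_{t^n_{i-1}}(\beta_{\sigma_n}(u),\beta(t^n_{i-1}))\le H|\mathcal P_n|(1+C\omega(|\mathcal P_n|)),\quad u\in[t^n_{i-1},t^n_i],
\]
and the analogous estimate for $\beta$ itself, so that $\sigma_n$ and $\gamma$ are uniformly $O(|\mathcal P_n|)$-close in the product metric. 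By Proposition \ref{prop:lorentz_usc}, $\limsup_n\tau(\sigma_n)\le \tau(\gamma)$, so the previous display gives $\limsup_n\sum_i\tau(p^n_{i-1},p^n_i)\le \tau(\gamma)$. Since $L_\tau(\gamma)\le \sum_i\tau(p^n_{i-1},p^n_i)$ for each $n$ by definition, we conclude $L_\tau(\gamma)\le \tau(\gamma)$.

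The main obstacle is the uniform-convergence step: the near-maximizing curves $\sigma_{n,i}$ are abstractly chosen from the definition of $\tau$ and need not resemble $\gamma|_{[t^n_{i-1},t^n_i]}$ in shape, so one must leverage the causality constraint $v_{\beta_{\sigma_{n,i}}}\le h$, the local boundedness of $h$, and the continuity modulus from \eqref{eq:cont} to confine all such curves to a common $O(|\mathcal P_n|)$-neighbourhood of $p^n_{i-1}$. Once this is arranged, the upper semicontinuity of Lorentzian length -- itself valid under only the continuity assumption \eqref{eq:cont} -- carries out the remaining work, and notably no log-Lipschitz condition or push-up property is required.
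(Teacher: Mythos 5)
Your proof is correct in substance, but it proves the hard inequality $L_\tau(\gamma)\le\tau(\gamma)$ by a genuinely different route than the paper. The paper argues locally: after reparametrizing $\gamma=(\id,\beta)$, it takes \emph{maximizing} geodesics between $\gamma_t$ and $\gamma_{t+h}$ (available from Corollary \ref{cor:loc-comp-causal-diamond}), uses H\"older's inequality and Lemma \ref{lem:lip_cont} to show $\limsup_{h\to0}\bigl(\tau(\gamma_t,\gamma_{t+h})/h\bigr)^2\le h(t,\beta_t)^2-v_\beta(t,t)^2$ a.e., and then integrates, since $t\mapsto L_\tau(\gamma|_{[a,t]})$ is absolutely continuous with increments dominated by $\tau(\gamma_t,\gamma_{t+h})$. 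You instead argue globally: concatenate \emph{near}-maximizers over a fine partition, observe that causality forces them to stay uniformly close to $\gamma$, and invoke the upper semicontinuity of the integral length (Proposition \ref{prop:lorentz_usc}) to push the partition sums below $\tau(\gamma)$. Your route buys you two things: you never need existence of maximizers (only near-maximizers), and you avoid the a.e.\ differentiation argument; the price is that you lean on Proposition \ref{prop:lorentz_usc}, which the paper's proof does not use (though it is proved beforehand, so there is no circularity). Two points should be tightened. First, your bound $\tau(p^n_{i-1},p^n_i)\le H|\mathcal P_n|$ is not automatic: competitors for the supremum defining $\tau$ need not a priori stay in the chosen compact neighbourhood of $\gamma([a,b])$; you must first confine them, which works for small mesh because Remark \ref{rmk:causal-curve-length-bd} bounds their $(d_{\mathcal F,h})_{h\inv}$-length by $\sqrt2|\mathcal P_n|$ and local compactness gives a uniform radius $r_0>0$ of precompact balls along $\gamma([a,b])$ -- the same confinement also legitimizes your use of $h\le H$ and Lemma \ref{lem:lip_cont}(2) along the $\sigma_{n,i}$. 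Second, the uniform convergence $\sigma_n\to\gamma$ is obtained most cleanly from Remark \ref{rmk:causal-curve-length-bd} itself: both $\sigma_n(u)$ and $\gamma(u)$ lie within $\sqrt2|\mathcal P_n|$ of $p^n_{i-1}$ in $(d_{\mathcal F,h})_{h\inv}$, which avoids having to translate your slice-wise $d_{t^n_{i-1}}$-estimate into the background metric. With these repairs the argument is complete, and as you note it uses only \eqref{eq:cont}, not \eqref{eq:lip} or push-up, in agreement with the paper's placement of this proposition before the log-Lipschitz assumptions.
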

\begin{proof}
The inequality $\tau(\gamma)\le L_\tau(\gamma)$ is a straightforward consequence of the definitions. We will prove the opposite inequality.

By reparametrization we may assume that $\gamma=(\id,\beta):[a,b]\to Y$. For each $t\in [a,b]$ and $h>0$ such that $t+h\in [a,b]$, let $\gamma_{t,h}=(\id,\beta_{t,h})$ be a maximizing geodesic $\gamma_t\curvearrowright \gamma_{t+h}$. Using H\"older's inequality
\begin{align*}
	&\tau(\gamma_t,\gamma_{t+h})^2\le h\int_t^{t+h}[h(s,\beta_{t,h}(s))-v_{\beta_{t,h}}(s,s)^2]\ud s,\quad \textrm{ or equivalently}\\
	&\frac{\tau(\gamma_t,\gamma_{t+h})^2}{h^2}+\frac 1h\int_t^{t+h}v_{\beta_{t,h}}(s,s)^2\ud s\le \frac 1h\int_t^{t+h}h(s,\beta_{t,h}(s))^2\ud s.
\end{align*}
For fixed $t_0\in [a,b]$, the estimate
\begin{align*}
	\frac 1h\int_t^{t+h}v_{\beta_{t,h}}(s,s)^2\ud s\ge &\left(\frac 1h\int_t^{t+h}v_{\beta_{t,h}}(s,s)\ud s\right)^2\ge \frac{1}{(1+\omega(t-t_0))^2}\left(\frac 1h\int_t^{t+h}v_{\beta_{t,h}}(t_0,s)\ud s\right)^2\\
	\ge & \frac{1}{(1+\omega(t-t_0))^2}\left(\frac{d_{t_0}(\beta_t,\beta_{t+h})}{h}\right)^2
\end{align*}
implies (using a similar argument in case $h<0$) that
\begin{align*}
	\limsup_{h\to 0}\frac{\tau(\gamma_t,\gamma_{t+h})^2}{h^2}+\frac{v_\beta(t_0,t)^2}{(1+\omega(t-t_0))^2}\le h(t,\beta(t))^2\quad a.e.\ t.
\end{align*}
Choosing a countable dense set of $t_0\in [a,b]$ this easily implies that 
\begin{align*}
	\limsup_{h\to 0}\left(\frac{\tau(\gamma_t,\gamma_{t+h})}{h}\right)^2 \le h(t,\beta(t))^2-v_\beta(t,t)^2\quad a.e.\ t
\end{align*}
Since $L_\tau(\gamma|_{[a,t+h]})-L_\tau(\gamma|_{[a,t]})\le \tau(\gamma_t,\gamma_{t+h})$, it follows that the function $t\mapsto L_\tau(\gamma_{[a,t]})$ is absolutely continuous and its derivative squared is bounded a.e. from above by $h(t,\beta(t))^2-v_\beta(t,t)^2$. This implies that $L_\tau(\gamma)\le \tau(\gamma)$ and completes the proof. 
\end{proof}

The following result will be useful in proving that future and past lightcones are open under the assumption \eqref{eq:lip}. 

\begin{prop}\label{prop:strict-future-open}
	Suppose $\gamma_0:q_0\curvearrowright p_0$ is a future directed Lipschitz time-like curve with $|\tau_{\gamma_0}'|\ge c_0>0$ a.e. Then there exists $c>0$ and open neighbourhoods $U,V$ of $q_0$ and $p_0$, respectively such that for any $q\in U$ and $p\in V$ there exists a future directed time-like curve $\gamma:q\curvearrowright p$ with $|\tau_\gamma'|\ge c$.
\end{prop}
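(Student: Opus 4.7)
The plan is to construct the time-like curve $\gamma\co q\curvearrowright p$ as a concatenation of three pieces: a short \emph{entry} segment $\eta_q$ from $q$ to a point $\gamma_0(s_q+2\delta)$ on $\gamma_0$ slightly after $q_0$, the middle sub-curve $\gamma_0|_{[s_q+2\delta,\,s_p-2\delta]}$, and a short \emph{exit} segment $\eta_p$ from $\gamma_0(s_p-2\delta)$ to $p$. By Lemma \ref{lem:reparam} I may assume (possibly shrinking $c_0>0$) that $\gamma_0(t)=(t,\beta_0(t))$ on $[a,b]$ with $h(t,\beta_0(t))^2-v_{\beta_0}(t,t)^2\ge c_0^2$ a.e. Fix $\delta>0$ with $4\delta<b-a$; the middle piece then has time-like gap $\ge c_0$ by restriction, so the task reduces to producing $\eta_q,\eta_p$ with a uniform positive gap, say $c_0/2$.

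For $q=(s_q,x_q)$ in a small enough neighborhood $U$ of $q_0=(a,\beta_0(a))$, define $\eta_q(r)=(r,\sigma_q(r))$ on $[s_q,s_q+2\delta]$, where $\sigma_q$ is a constant-speed path in $(X,d_a)$ from $x_q$ to $\beta_0(s_q+2\delta)$ whose $d_a$-length exceeds $d_a(x_q,\beta_0(s_q+2\delta))$ by an arbitrarily small amount---such a path exists since the induced length metric of $d_a$ generates the topology of $X$. Combining the triangle inequality with
\[
d_a(\beta_0(a),\beta_0(s_q+2\delta))\le \int_a^{s_q+2\delta}v_{\beta_0}(a,t)\,\ud t\le (1+C\omega(|s_q-a|+2\delta))\int_a^{s_q+2\delta}v_{\beta_0}(t,t)\,\ud t,
\]
the time-like condition $v_{\beta_0}(t,t)^2\le h(\gamma_0(t))^2-c_0^2$, Lemma \ref{lem:lip_cont}(2) (to pass from $|\sigma_q'|_{d_a}$ to $v_{\sigma_q}(r,r)$), and the continuity of $h$ near $q_0$, one obtains
\[
v_{\sigma_q}(r,r)^2\le h(q_0)^2-c_0^2+\epsilon(U,\delta),\qquad h(\eta_q(r))^2\ge h(q_0)^2-\epsilon'(U,\delta),
\]
with $\epsilon(U,\delta),\epsilon'(U,\delta)\to 0$ as $U$ shrinks to $\{q_0\}$ and $\delta\to 0$. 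Shrinking $U$ and $\delta$ appropriately then yields $|\tau'_{\eta_q}|^2=h(\eta_q(r))^2-v_{\sigma_q}(r,r)^2\ge c_0^2/4$ a.e., so $\eta_q$ is future-directed time-like with $|\tau'_{\eta_q}|\ge c_0/2$.

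A symmetric construction yields $\eta_p\co\gamma_0(s_p-2\delta)\curvearrowright p$ for $p$ in a neighborhood $V$ of $p_0$, with $|\tau'_{\eta_p}|\ge c_0/2$. The concatenation $\eta_q\cdot\gamma_0|_{[s_q+2\delta,\,s_p-2\delta]}\cdot\eta_p$ is future-directed and time-like with $|\tau'|\ge c_0/2$ a.e., proving the proposition with $c=c_0/2$. The main obstacle is the quantitative balance: one must simultaneously absorb three sources of error---the modulus $\omega$ from \eqref{eq:cont} comparing $d_r$ with $d_a$, the continuity modulus of $h$, and the excess $d_a(x_q,\beta_0(a))+|s_q-a|$---into the slack afforded by $c_0>0$, which forces choosing $\delta$ first, then $U$ and $V$.
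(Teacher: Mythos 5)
Your construction is correct, and it takes a genuinely different route from the paper's. The paper keeps the \emph{whole} curve $\gamma_0$ as the middle piece, affinely rescales its time component to fit a slightly shifted interval (Lemma \ref{lem:affine-perturbation-timelike}), and joins $q$ and $p$ to the fixed spatial endpoints $x_0,y_0$ by slow, almost-vertical connectors $(\id,\beta_x)$, $(\id,\beta_y)$ whose timelike gap is governed by the local bi-Lipschitz constant $L$ of the family $\{d_s\}$ on a compact neighbourhood, so the resulting constant is $\min\{c_0/2,\sqrt{1/L-1/L^2}\}$. You instead restrict $\gamma_0$ to $[s_q+2\delta,s_p-2\delta]$ and build diagonal connectors that land on actual points of $\gamma_0$, extracting the needed room directly from the slack $c_0^2$ in $v_{\beta_0}(t,t)^2\le h(\gamma_0(t))^2-c_0^2$; this dispenses with the affine-perturbation lemma entirely and gives a constant $c=c_0/2$ depending only on $c_0$, at the price of the more delicate order of quantifiers ($\delta$ first, then $U,V$ with time-extent and spatial radius small compared with $\delta$), which you correctly flag as the crux. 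Both arguments use only \eqref{eq:cont} (through Lemma \ref{lem:lip_cont}) and the continuity of $h$, which matters for the remark following the proposition.

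Two small repairs. First, a path $\sigma_q$ of $d_a$-length close to $d_a(x_q,\beta_0(s_q+2\delta))$ need not exist: $d_a$ is not assumed to be a length metric, only its \emph{induced length metric} generates the topology. You should ask for length close to the induced length distance; your own upper bound -- $\mathrm{length}_{d_a}(\beta_0|_{[a,s_q+2\delta]})$ plus the (length-metric) distance from $x_q$ to $x_0$, which is small once $U$ is chosen in the topology -- controls exactly that quantity, so nothing else in the estimate changes. (The paper's proof takes the same shortcut in speaking of $d_a$-geodesics.) Second, minor bookkeeping: you need slightly more room than $4\delta<b-a$ (e.g. $6\delta<b-a$, or shrink the time-extent of $U,V$) so that the three pieces fit when $s_q>a$ and $s_p<b$, and one should fix a compact $J_0\times K$ containing the whole construction before invoking Lemma \ref{lem:lip_cont} and the modulus $\omega$. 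These are cosmetic and do not affect the validity of your argument.
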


In the proof we need a technical lemma.

\begin{lemma}\label{lem:affine-perturbation-timelike}
	Let $J_0\subset I$ and $K\subset X$ be compact. Suppose $\gamma=(\alpha,\beta):J\to J_0\times K\subset hI\times_{\mathcal{F}}X$ is a future directed curve such that
	\begin{align*}
		\tau_\gamma'(t)^2=h(\alpha_t,\beta_t)\alpha'(t)^2-v_\beta(\alpha_t,t)^2\ge c_0^2>0
	\end{align*}
for a.e. $t\in J$, and
\begin{align*}
	\max\{\sup_{J_0\times K}h, |\alpha(J)|\inv, \|\alpha'\|_{L^\infty(J)},\|v_\beta\|_{L^\infty(J_0\times J)}\}\le L.
\end{align*}

Then there exists $\delta_0=\delta_0(L,J_0,K)$ with the following property. Whenever $J'\subset J_0$ is a compact subinterval with $|J'\triangle \alpha(J)|<\delta_0$, and $A:\alpha(J)\to J'$ is an increasing affine bijection, the perturbed curve $\gamma_A:=(A\circ\alpha,\beta)$ satisfies $\tau_{\gamma_A}'(t)\ge c_0/2$ a.e. $t\in J$.
\end{lemma}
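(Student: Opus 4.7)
The plan is to write $A(s)=\lambda s+\mu$ with $\lambda>0$ and then compare $\tau_{\gamma_A}'(t)^2=h(A(\alpha_t),\beta_t)^2\lambda^2\alpha'(t)^2-v_\beta(A(\alpha_t),t)^2$ against the given $\tau_\gamma'(t)^2=h(\alpha_t,\beta_t)^2\alpha'(t)^2-v_\beta(\alpha_t,t)^2\ge c_0^2$ term-by-term, controlling all errors in terms of $\delta_0$.

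First, I would read off from the hypothesis $|J'\triangle \alpha(J)|<\delta_0$ two elementary quantitative facts about the affine map $A$. If $\alpha(J)=[a_0,a_1]$ and $J'=[a_0',a_1']$, then the endpoint displacements $|a_i'-a_i|$ are bounded by $\delta_0$, so $\sup_{s\in\alpha(J)}|A(s)-s|\le \delta_0$. Moreover $|\lambda-1|=|(a_1'-a_0')-(a_1-a_0)|/(a_1-a_0)\le L\delta_0$, using $|\alpha(J)|^{-1}\le L$. Since $A(\alpha_t)\in J_0$ (which we may assume by shrinking $\delta_0$ so that $J'\subset J_0$), we stay inside the compact regime where the moduli from \eqref{eq:cont} and continuity of $h$ apply.

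Next I would estimate the two ingredients of $\tau_{\gamma_A}'(t)^2$ separately. For the timelike ($h$-)term, use uniform continuity of $h$ on $J_0\times K$ with modulus $\eta$ to get $|h(A(\alpha_t),\beta_t)-h(\alpha_t,\beta_t)|\le \eta(\delta_0)$, and then expand
\[
h(A(\alpha_t),\beta_t)^2\lambda^2=h(\alpha_t,\beta_t)^2+\bigl[h(\alpha_t,\beta_t)^2(\lambda^2-1)+(h(A(\alpha_t),\beta_t)^2-h(\alpha_t,\beta_t)^2)\lambda^2\bigr],
\]
the bracket being of size $O\bigl(L^2\cdot L\delta_0+\eta(\delta_0)\cdot(1+L\delta_0)^2\bigr)$. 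Multiplying by $\alpha'(t)^2\le L^2$ gives an error $E_1(\delta_0)$ depending only on $L,J_0,K$ and vanishing with $\delta_0$. For the spacelike ($v_\beta$-)term, apply Lemma~\ref{lem:lip_cont}(2) on the compact pair $(J_0,K)$ to get
\[
v_\beta(A(\alpha_t),t)\le (1+C\omega(\delta_0))v_\beta(\alpha_t,t),
\]
hence $v_\beta(A(\alpha_t),t)^2\le v_\beta(\alpha_t,t)^2+E_2(\delta_0)$ with $E_2(\delta_0)=[(1+C\omega(\delta_0))^2-1]L^2$, again vanishing with $\delta_0$ and depending only on $L,J_0,K$.

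Combining these inequalities I obtain
\[
\tau_{\gamma_A}'(t)^2\ge \tau_\gamma'(t)^2-E(\delta_0)\ge c_0^2-E(\delta_0),
\]
with $E(\delta_0)=E_1(\delta_0)+E_2(\delta_0)\to 0$ as $\delta_0\to 0$. Choosing $\delta_0=\delta_0(L,J_0,K,c_0)$ small enough that $E(\delta_0)\le 3c_0^2/4$ (and small enough that $J'\subset J_0$) yields $\tau_{\gamma_A}'(t)\ge c_0/2$ a.e., as desired. There is no conceptual obstacle here; the only mildly delicate point is that the statement lists $\delta_0$ as a function of $L,J_0,K$ alone, so one must either regard $c_0$ as implicitly part of the data or understand the conclusion as $\tau_{\gamma_A}'(t)^2\ge \tau_\gamma'(t)^2-E(\delta_0)$ — the applications will be fed $\delta_0$ depending on the required threshold $c_0/2$.
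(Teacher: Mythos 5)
Your argument is correct and is essentially the paper's own proof: both control the affine perturbation via $\sup_s|A(s)-s|$ and $|\lambda^2-1|$ in terms of $\delta_0$ and $|\alpha(J)|^{-1}\le L$, bound the $h$-term by uniform continuity of $h$ on $J_0\times K$ and the $v_\beta$-term via \eqref{eq:cont}/Lemma \ref{lem:lip_cont}(2), and then choose $\delta_0$ so the total error is below a fixed fraction of $c_0^2$. Your closing caveat is also on target: the paper's own choice of $\delta_0$ (requiring the error to be less than $c_0^2/2$) likewise depends on $c_0$, so the statement's ``$\delta_0=\delta_0(L,J_0,K)$'' should be read as including $c_0$ among the data, exactly as you suggest.
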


\begin{proof}
	Let $\alpha(J)=[a,b]$ and $J'=[a',b']$. Then
	\[
	A(s)=a+\frac{b'-a'}{b-a}(s-a),\quad s\in [a,b]
	\]
	and $\delta:=|\alpha(J)\triangle J'|=|a-a'|+|b-b'|$. Observe that
	\begin{align*}
	\sup_{s\in \alpha(J)}|A(s)-s|\le 2\delta \quad\textrm{and}\quad |(A')^2-1|\le \frac{3\delta}{|\alpha(J)|}
	\end{align*}
as long as $\delta\le |\alpha(J)|$. By \eqref{eq:cont} and Lemma \ref{lem:lip_cont} have that
	\begin{align*}
		h(A(\alpha_t),\beta_t)^2& \ge h(\alpha_t,\beta_t)^2-\omega_{h^2}(2\delta)\\
		v_\beta(A(\alpha_t),t)^2 &\le [v_{\beta}(\alpha_t,t)+\|v_\beta\|_{L^\infty(J_0\times J)}\omega(2\delta)]^2\le v_{\beta}(\alpha_t,t)^2+3\|v_\beta\|_{L^\infty(J_0\times J)}^2\omega(2\delta),
	\end{align*}
where $\omega_{h^2}$ is the modulus of continuity of $h^2|_{J_0\times K}$. Denoting $\tilde\omega=\omega_{h^2}+\omega$ we have the estimate

\begin{align*}
	\tau_{\gamma_A}'(t)^2=&h(A(\alpha_t),\beta_t)^2(A')^2\alpha'(t)^2-v_\beta(A(\alpha_t),t)^2\\
	 \ge & (A')^2[h(\alpha_t,\beta_t)^2-\omega_{h^2}(2\delta)]\alpha'(t)^2-v_{\beta}(\alpha_t,t)^2-3\|v_\beta\|_{L^\infty(J_0\times J)}^2\omega(2\delta) \\
	\ge & h(\alpha_t,\beta_t)^2\alpha'(t)^2-\frac{3\delta}{|\alpha(J)|}\|\alpha'\|_{L^\infty(J)}^2(\sup_{J_0\times K}h)^2-4\|\alpha'\|_{L^\infty(J)}^2\tilde\omega(2\delta)\\
	&-v_{\beta}(\alpha_t,t)^2-3\|v_\beta\|_{L^\infty(J_0\times J)}^2\tilde\omega(2\delta)\\
	\ge &c_0^2-3L^5\delta-7L^2\tilde\omega(2\delta)
\end{align*}
	The proof is completed by choosing $\delta_0$ small enough such that $3L^5\delta-7L^2\tilde\omega(2\delta)<c_0^2/2$. 
\end{proof}

\begin{proof}[Proof of Proposition \ref{prop:strict-future-open}]
	Let $J_0\subset I$ and $K\subset X$ be compact sets so that $J_0\times K$ contains a neughbourhood of $\im(\beta)$. Write $\gamma_0=(\alpha_0,\beta_0):J\to hI\times_{\mathcal{F}}X$ and let $L$ be such that the metrics $\{d_s\}_{s\in J_0}$ are pairwise $L$-bi-Lipschitz on $K$, and
	\begin{align}\label{eq:L}
		(\inf_{J_0\times K}h)^{-2}+\sup_{J_0\times K}h+ |\alpha(J)|\inv+ \|\alpha'\|_{L^\infty(J)}+\|v_\beta\|_{L^\infty(J_0\times J)}\le L.
	\end{align}
Define
\[
U=(a_0-\delta,a_0+\delta)\times B_{d_a}(x_0,\delta),\quad V=(b_0-\delta,b_0+\delta)\times B_{d_b}(y_0,\delta), 
\]
where $\delta\le \frac{\delta_0}{2(L^2+1)}$ is small enough so that $U\cup V\subset J_0\times K$; here $\delta_0$ is the constant given by Lemma \ref{lem:affine-perturbation-timelike}.
Given $q=(a,x)\in U$ and $p=(b,y)\in V$, let
\[
\beta_x:[a,a+Ld_a(x,x_0)]\to X,\quad \beta_y:[b-Ld_b(y,y_0),b]\to X
\]
be constant speed parametrized geodesics $x\curvearrowright x_0$ and $y_0\curvearrowright y$, with respect to $d_a$ and $d_b$, respectively. It follows that 
\begin{align}\label{eq:b-x-y-speed}
	v_{\beta_x}(t,t)\le L|(\beta_x)_t'|_{d_a}=\frac 1L,\quad v_{\beta_y}(t,t)\le L|(\beta_y)_t'|_{d_b}\le \frac 1L 
\end{align}
a.e. $t$. By \eqref{eq:L} and \eqref{eq:b-x-y-speed} we have that
\begin{align*}
h(t,\beta_x(t))^2&-v_{\beta_x}(t,t)^2\ge \frac 1L-L^2\cdot\left(\frac 1L\right)^2=\frac 1L-\frac 1{L^2}, \quad a.e.\ t\in [a,a+L^2d_a(x_0,x)],\\
h(t,\beta_y(t))^2&-v_{\beta_y}(t,t)^2\ge \frac 1L-L^2\cdot\left(\frac 1L\right)^2=\frac 1L-\frac 1{L^2} \quad a.e.\ t\in [b-L^2d_b(y_0,y),y].
\end{align*}
Thus the curves $\gamma_x=(\id,\beta_x)$ and $\gamma_y=(\id,\beta_y)$ satisfy
\begin{align*}
	\tau_{\gamma_x}'\ge \sqrt{\frac 1L-\frac 1{L^2}},\quad \tau_{\gamma_y}'\ge \sqrt{\frac 1L-\frac 1{L^2}}\quad a.e.
\end{align*}
Oberve that $\gamma_x:q\curvearrowright \tilde q_0$ and $\gamma_y:\tilde p_0\curvearrowright p$, where
\[
q_0=(a+L^2d_a(x,x_0),x_0),\quad p_0=(b-L^2d_b(y_0,y),y_0).
\]
Choosing $J'=[a+L^2d_a(x,x_0),b-L^2d_b(y_0,y)]$, we note that
\[
|[a_0,b_0]\triangle J'| =|a+L^2d_a(x_0,x)-a_0|+|b-L^2d_b(y_0,y)-b_0|< 2\delta+2L^2\delta= \delta_0.
\]
Thus we may apply Lemma \ref{lem:affine-perturbation-timelike} to obtain that $\gamma_A:=(A\circ\alpha_0,\beta_0)$, where $A:[a_0,b_0]\to J'$ is the increasing affine bijection, satisfies $\tau_{\gamma_A}'\ge c_0/2$. Since $\tau_A:\tilde q_0\curvearrowright \tilde p_0$. The proof is now complete because the concatenation $\gamma:=\gamma_y\gamma_A\gamma_x:p\curvearrowright q$ is Lipschitz and satisfies
\[
\tau_\gamma'\ge \min\left\{\frac {c_0}2,\sqrt{\frac 1L-\frac 1{L^2}}\right\}\quad a.e.
\]
\end{proof}

\begin{remark}
	Define a strict time-like relation by setting $q\ll_sp$ if there exists a causal Lipschitz curve $\gamma:q\curvearrowright p$ such that $\tau_\gamma'\ge c$ for some $c>0$. Then Proposition \ref{prop:strict-future-open} shows that the corresponding future (and past) cones $I^\pm_s(p)$ are open \emph{assuming only \eqref{eq:cont}} (typically \eqref{eq:lip} is needed to guarantee openness of time-like future cones). In general Lorentzian length spaces $\ll_s$ need not coincide with $\ll$, and we do not know whether they can differ in our setting. In Section \ref{sec:lorentzian_length_space} we will show that $\ll=\ll_s$ in $hI\times_{\mathcal{F}}X$ assuming \eqref{eq:lip}.
\end{remark}

\subsection{Lorentzian manifolds}\label{sec:manifolds} Assume that $X=M$ is a smooth manifold, $h:I\times M\to (0,\infty)$ is continuous, and $\{g_s\}_{s\in I}$ is a collection of continuous Riemannian metrics on $M$ such that the local coordinate representations $(s,p)\mapsto (g_s)_{ij}(p)$ are continuous in $(s,p)$.
\begin{remark}
It is standard that the continuity assumption above is equivalent to the following: $(s,p)\mapsto g_s(\bm X_p,\bm Y_p)$ is continuous for any smooth vector fields $\bm X,\bm Y\in C^\infty(TM)$.
\end{remark}
\noindent It follows that
\begin{align}\label{eq:lor-prod-metric}
g\defeq -h^2\ud s^2+g_s
\end{align}
is a continuous Lorentzian metric on $I\times M$. Thus $(I\times M,g)$ is a continuous space-time and we denote by $(I\times M,\le_g,\ll_g,\tau_g)$ the induced Lorentzian length structure.

Now consider the family of length metrics $\mathcal F=\{ d_{g_s}\}_{s\in I}$ on $M$ associated to $\{g_s\}_{s\in I}$.

\begin{lemma}\label{lem:cont-cond-satisfied}
Under the hypotheses above, $\mathcal F$ satisfies \eqref{eq:cont}. Moreover, for any absolutely continuous curve $\beta:[a,b]\to X$ there exists a null set $N\subset [a,b]$ such that 
\begin{align}\label{eq:gen-speed-agree}
	v_\beta(s,t)^2=g_s(\beta_t',\beta_t'),\quad (s,t)\in I\times ([a,b]\setminus N).
\end{align}
\end{lemma}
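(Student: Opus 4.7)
The plan is to first establish the log-continuity condition \eqref{eq:cont}, and then use it together with Lemma \ref{lem:lip_cont} to deduce \eqref{eq:gen-speed-agree}.

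For \eqref{eq:cont}, I would fix compact sets $J\subset I$ and $K\subset M$, and choose a slightly enlarged relatively compact set $K'\supset K$ which is covered by finitely many coordinate charts. On each chart the coefficients $(s,p)\mapsto (g_s)_{ij}(p)$ are continuous, hence uniformly continuous on the compact set $J\times (K'\cap U_\alpha)$. This yields a single modulus $\omega_0$ and a constant $C$ (depending on uniform bounds of the eigenvalues of $g_s$ over $J\times K'$) such that $(1-C\omega_0(|s-s'|))g_{s'}\le g_s\le (1+C\omega_0(|s-s'|))g_{s'}$ as quadratic forms at every point of $K'$. Integrating along any absolutely continuous curve in $K'$ gives $\ell_{g_s}(\beta)\le \sqrt{1+C\omega_0(|s-s'|)}\,\ell_{g_{s'}}(\beta)$. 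The only subtlety is ensuring near-minimizing curves for $d_{g_s}$ actually stay in $K'$: for $r$ small enough, any curve of $g_s$-length at most $2r$ starting in $K$ cannot exit $K'$, uniformly for $s\in J$, again by the uniform bounds on $g_s$. Thus for $x,y\in K$ with $\inf_{s\in J}d_{g_s}(x,y)<r$ one obtains $d_{g_s}(x,y)\le \sqrt{1+C\omega_0(|s-s'|)}\,d_{g_{s'}}(x,y)$, and taking logarithms yields \eqref{eq:cont} with modulus of order $\sqrt{\omega_0}$.

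For \eqref{eq:gen-speed-agree} I would fix $\beta:[a,b]\to M$ absolutely continuous and reduce, by a finite covering and restriction argument, to the case where $\beta$ lies in a single chart. Then $\beta$ is absolutely continuous in the Euclidean sense and has a classical derivative $\beta'(t)$ for $t$ outside a null set $N_0$. Define $u(s,t):=\sqrt{g_s(\beta'(t),\beta'(t))}$ for $t\notin N_0$; the hypothesis on $(g_s)_{ij}$ makes $s\mapsto u(s,t)$ continuous for every such $t$. For each fixed $s\in I$, Lemma \ref{lem:lip_cont}(1) gives $v_\beta(s,t)=|\beta_t'|_{d_{g_s}}$ for a.e. $t$, and the standard identification of the metric derivative with the continuous Riemannian norm of the tangent vector gives $|\beta_t'|_{d_{g_s}}=u(s,t)$ for a.e. $t$. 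Hence for each $s$ there is a null set $N_s\subset [a,b]$ outside of which $v_\beta(s,t)=u(s,t)$. To produce a single common null set, pick a countable dense $D\subset I$ and set $N:=N_0\cup \bigcup_{s\in D}N_s$, still null. For $t\notin N$ one has $v_\beta(s,t)=u(s,t)$ for every $s\in D$, and both sides depend continuously on $s$: continuity of $u(\cdot,t)$ is immediate, while continuity of $v_\beta(\cdot,t)$ follows from Lemma \ref{lem:lip_cont}(2) once finiteness somewhere is known, which is guaranteed by $v_\beta(s,t)=u(s,t)<\infty$ at any $s\in D$. A density argument then extends the identity to every $s\in I$.

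The main obstacle I anticipate is precisely this continuity-in-$s$ step at individual $t\notin N$: Lemma \ref{lem:lip_cont}(2) provides only a multiplicative comparison, which yields continuity only after finiteness has been established. The reduction to a single chart is what makes the plan work, since in the chart $u(s,t)$ is manifestly finite for all $(s,t)$ with $t\notin N_0$, forcing finiteness of $v_\beta(\cdot,t)$ on the dense set $D$ and, via the multiplicative bound of Lemma \ref{lem:lip_cont}(2), throughout $I$.
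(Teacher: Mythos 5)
Your proposal is correct. For \eqref{eq:cont} you argue essentially as the paper does: uniform continuity of $(s,p)\mapsto(g_s)_{ij}(p)$ on a compact neighbourhood gives a two-sided quadratic-form comparison $(1\pm C\omega_0)g_{s'}$, which is integrated along curves, with the (correctly flagged) bookkeeping that near-minimizers of small length cannot leave the enlarged compact set; the paper uses minimizers and the auxiliary metric $\tilde g=\ud s^2+g_s$ for the same purpose, and the fact that your modulus is $\omega_0$ up to constants rather than exactly the paper's is irrelevant for \eqref{eq:cont}. Where you genuinely diverge is in producing the single null set for \eqref{eq:gen-speed-agree}: you take, in a chart, the union of the exceptional sets $N_s$ over a countable dense set of $s$, and then upgrade to all $s\in I$ by continuity in $s$ of both $u(\cdot,t)$ and $v_\beta(\cdot,t)$, the latter via the two-sided multiplicative bound of Lemma \ref{lem:lip_cont}(2) once finiteness is supplied by the chart representation. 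The paper instead takes $N$ to be the set of non-Lebesgue points of $t\mapsto\beta_t'$, a set manifestly independent of $s$, and then for \emph{every} $s$ and every Lebesgue point writes $v_\beta(s,t)=\limsup_h\frac1h\int_t^{t+h}|\beta_\tau'|_{d_{g_s}}\ud\tau=\sqrt{g_s(\beta_t',\beta_t')}$ directly, so no density-and-continuity step is needed. Your route costs the extra (but sound) continuity argument and the reduction to charts; the paper's costs the small observation that the Lebesgue-point property of $\beta'$ passes through the continuous norm $\sqrt{g_s(\cdot,\cdot)}$ uniformly in the base point. Both are complete proofs, and your use of Lemma \ref{lem:lip_cont}(2) correctly isolates finiteness as the only hypothesis needed for continuity in $s$.
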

\begin{proof}
Let $J\subset I$ and $K\subset M$ be compact, and let $\tilde g=\ud s^2+g_s$ be the Riemannian metric on $I\times M$. Set  $\tilde d_s=d_{\tilde g}|_{\{s\}\times X}$. Choose $r>0$ so that $\overline B_{d_{\tilde g}}(J\times K,r)$ is compact. By the continuity of the metric, there exists a modulus of continuity $\omega:[0,\infty)\to [0,\infty)$ such that $\sqrt{g_s(v,v)}\le\sqrt{g_{s'}(v,v)}+\omega(|s-s'|)\sqrt{g_{s'}(v,v)}$ for all $s,s'\in J_r\defeq pr_I(\overline B_{d_{\tilde g}}(J\times K,r))$ and $v\in T_pM$ with $p\in K_r\defeq pr_X(\overline B_{d_{\tilde g}}(J\times K,r))$. In particular if $x,y\in K$ and $\inf_{s\in J}d_{\tilde g}((s,x),(s,y))<r$, then $d_s(x,y)\le (1+\omega(|J_r|)))\tilde d_{s'}(x,y)$. Thus the metrics $d_s, \tilde d_{s'}$ are locally bi-Lipschitz equivalent for $s,s'\in J$ with uniform bi-Lipschitz constant $C=C(J,K)$. For $x,y\in K$ with $\inf_{s\in J}d_s(x,y)<r/C$, we have that any $d_s$-geodesic $\beta:x\curvearrowright y$ lies in $K$ for any $s\in J$, and thus
\begin{align*}
	d_{s'}(x,y)\le &\ell_{d_{s'}}(\beta)=\int_0^1\sqrt{g_{s'}(\beta_t',\beta_t')}\ud t\le \int_0^1(1+\omega(|s-s'|))\sqrt{g_{s}(\beta_t',\beta_t')}\ud t \\
	\le & (1+\omega(|s-s'|))d_s(x,y).
\end{align*}
This proves \eqref{eq:cont}.

To prove the second claim no that, since $\beta$ is absolutely continuous, $\beta_t'$ exists for a.e. $t$. Let $N\subset [a,b]$ be a null-set such that every $t\in [a,b]\setminus N$ is a Lebesgue point of $t\mapsto \beta_t'$. For fixed $s\in I$ we have that $|\beta_t'|_{d_{g_s}}^2=g_s(\beta_t',\beta_t')$ a.e. $t$. In particular for $(s,t)\in I\times ([a,b]\setminus N)$ we have that
\begin{align*}
	\sqrt{g_s(\beta_t',\beta_t')}=\lim_{h\to 0}\frac 1h\int_t^{t+h}\sqrt{ g_s(\beta_\tau',\beta_\tau')}\ud\tau=\limsup_{h\to 0}\frac 1h\int_t^{t+h}|\beta_\tau'|_{d_{g_s}}\ud\tau=v_\beta(s,t)
\end{align*}
\end{proof}

In particular, the length element of $g$ agrees with \eqref{eq:lorentzian-length-element} a.e. on absolutely continuous curves.
\begin{cor}\label{cor:length-element-compatible}
If $\gamma=(\alpha,\beta)$ is an absolutely continuous curve, we have that
\begin{align*}
	g(\gamma_t',\gamma_t')=-|\tau_\gamma'|^2(t)=-h(\gamma_t)\alpha'(t)^2+v_\beta(\alpha_t,t)^2\quad a.e.\ t.
\end{align*}
\end{cor}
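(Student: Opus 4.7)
The plan is to reduce the identity to the splitting of the tangent vector $\gamma_t'$ along the product $I\times M$ together with the pointwise description of the generalized metric speed obtained in Lemma \ref{lem:cont-cond-satisfied}. So first I would note that, where $\gamma=(\alpha,\beta)$ is differentiable in the usual sense (which happens a.e. since $\gamma$ is absolutely continuous with values in a manifold), its tangent vector decomposes under the canonical product splitting $T_{(s,p)}(I\times M)=\R\partial_s\oplus T_pM$ as $\gamma_t'=\alpha'(t)\partial_s+\beta_t'$. Since $g=-h^2\ud s^2+g_s$ with the two factors $g$-orthogonal, I would immediately write
\begin{align*}
g(\gamma_t',\gamma_t')=-h(\gamma_t)^2\alpha'(t)^2+g_{\alpha_t}(\beta_t',\beta_t')\quad \text{a.e. } t.
\end{align*}

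Next, I would invoke the second assertion of Lemma \ref{lem:cont-cond-satisfied}: there is a null set $N\subset[a,b]$ (depending on $\beta$ but \emph{not} on $s$) such that $v_\beta(s,t)^2=g_s(\beta_t',\beta_t')$ for every $(s,t)\in I\times([a,b]\setminus N)$. The key point is that $N$ is independent of $s$, so one may specialize the identity to the (measurable) diagonal choice $s=\alpha_t$ without any Fubini-type difficulty. This gives $g_{\alpha_t}(\beta_t',\beta_t')=v_\beta(\alpha_t,t)^2$ for a.e. $t\in[a,b]$.

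Combining the two displayed identities with the definition \eqref{eq:lorentzian-length-element} of $|\tau_\gamma'|^2$ yields the conclusion
\begin{align*}
g(\gamma_t',\gamma_t')=-h(\gamma_t)^2\alpha'(t)^2+v_\beta(\alpha_t,t)^2=-|\tau_\gamma'|^2(t)\quad \text{a.e. } t.
\end{align*}
The only subtle point is the one already flagged, namely that the null exceptional set produced by Lemma \ref{lem:cont-cond-satisfied} is uniform in $s$; apart from this observation the proof is a direct computation using the orthogonal splitting of $g$ and is essentially assembly of the already-established ingredients.
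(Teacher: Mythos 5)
Your proposal is correct and follows exactly the route the paper intends: the corollary is stated as an immediate consequence of Lemma \ref{lem:cont-cond-satisfied}, whose second assertion was formulated precisely so that the exceptional null set $N$ is uniform in $s$, allowing the diagonal substitution $s=\alpha_t$ that you carry out, combined with the orthogonal splitting $g=-h^2\ud s^2+g_s$. (Note the paper's statement has a typo, $h(\gamma_t)$ in place of $h(\gamma_t)^2$; your version with $h(\gamma_t)^2\alpha'(t)^2$ is the correct one, consistent with \eqref{eq:lorentzian-length-element}.)
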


We now give the proof of Corollary \ref{cor:compatible}.
\begin{proof}[Proof of Corollary \ref{cor:compatible}]
By Corollary \ref{cor:length-element-compatible} we have that a curve $\gamma$ in $I\times M$ is causal with respect to $(I\times M,\le_g,\ll_g,\tau_g)$ if and only if it is causal in $hI\times_\mathcal FM$. Moreover, in this case $\tau_g(\gamma)=\tau(\gamma)$ and thus $\tau_g=\tau$. This completes the proof.
\end{proof}

Next we verify \eqref{eq:lip} for Lipschitz metrics.

\begin{cor}\label{cor:lip-metric}
Suppose that $g$ in \eqref{eq:lor-prod-metric} is Lipschitz (i.e. $h$ and the local coordinate representations $(s,p)\mapsto (g_s)_{ij}(p))$ are locally Lipschitz). Then $\mathcal F=\{d_{g_s}\}_{s\in I}$ satisfies \eqref{eq:lip}.
\end{cor}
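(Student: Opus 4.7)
The plan is to revisit the proof of Lemma \ref{lem:cont-cond-satisfied} and upgrade the modulus of continuity $\omega$ to a linear one, using the Lipschitz hypothesis on the coefficients $(g_s)_{ij}$. The upshot will be an infinitesimal linear comparison $\sqrt{g_s(v,v)}\le(1+C|s-s'|)\sqrt{g_{s'}(v,v)}$ on compact sets; integrating along a $d_s$-geodesic and applying $\log(1+x)\le x$ to both directions yields \eqref{eq:lip}.

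First I would fix compact $J\subset I$ and $K\subset M$, and enlarge them to compact $J_0\Supset J$ and $K_0\Supset K$. Cover $K_0$ by finitely many coordinate charts $\{U_\alpha\}$. On each chart the map $(s,p)\mapsto (g_s)_{ij}(p)$ is Lipschitz by hypothesis, so for a constant $L=L(J_0,K_0)$ one has
\[
|(g_s)_{ij}(p)-(g_{s'})_{ij}(p)|\le L|s-s'|,\quad s,s'\in J_0,\ p\in K_0.
\]
Combined with the uniform ellipticity of $\{g_s\}_{s\in J_0}$ on $K_0$ (which follows from joint continuity and compactness), this yields a constant $C_1=C_1(J_0,K_0)$ such that
\[
|g_s(v,v)-g_{s'}(v,v)|\le C_1|s-s'|\,g_{s'}(v,v),\quad s,s'\in J_0,\ p\in K_0,\ v\in T_pM.
\]
For $|s-s'|<\tfrac12 C_1^{-1}$, taking square roots and using $\sqrt{1+x}\le 1+x/2$ gives the pointwise linear comparison
\begin{equation}\label{eq:pointwise-lip}
\sqrt{g_s(v,v)}\le (1+C|s-s'|)\sqrt{g_{s'}(v,v)}
\end{equation}
with $C=C_1/2$; a standard partition-and-iterate argument (or bounding by the diameter of $J_0$) extends \eqref{eq:pointwise-lip} to all $s,s'\in J_0$ up to enlarging $C$.

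Next I would choose $r>0$ small enough so that any $d_s$-geodesic between points of $K$ with $d_s$-distance less than $r$ lies in $K_0$, uniformly for $s\in J$; this is possible by the uniform local bi-Lipschitz equivalence of $\{d_s\}_{s\in J_0}$ on $K_0$, exactly as in the proof of Lemma \ref{lem:cont-cond-satisfied}. For $x,y\in K$ with $\inf_{s\in J}d_s(x,y)<r$, pick a $d_s$-geodesic $\beta\co x\curvearrowright y$ lying inside $K_0$, and compute
\[
d_{s'}(x,y)\le \ell_{d_{s'}}(\beta)=\int_0^1\sqrt{g_{s'}(\beta_t',\beta_t')}\,\ud t\le (1+C|s-s'|)\int_0^1\sqrt{g_s(\beta_t',\beta_t')}\,\ud t=(1+C|s-s'|)d_s(x,y)
\]
using \eqref{eq:pointwise-lip}. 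Symmetrizing in $s,s'$ and applying $\log(1+x)\le x$ gives
\[
\Big|\log\frac{d_s(x,y)}{d_{s'}(x,y)}\Big|\le C|s-s'|,
\]
which is precisely \eqref{eq:lip}.

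The one mildly delicate point is ensuring that the $d_s$-geodesic used in the integration actually stays inside the chart neighbourhood where \eqref{eq:pointwise-lip} is available; this is handled exactly as in Lemma \ref{lem:cont-cond-satisfied}, namely by first fixing the enlarged compact $K_0$ and then choosing $r$ smaller than a fixed fraction of $\dist(K,\partial K_0)$ in any of the uniformly equivalent metrics $d_s$, $s\in J_0$. Everything else is routine calculus.
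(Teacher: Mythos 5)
Your proposal is correct and follows essentially the same route as the paper: the paper's proof of Corollary \ref{cor:lip-metric} simply observes that under the Lipschitz hypothesis the modulus $\omega$ in the proof of Lemma \ref{lem:cont-cond-satisfied} may be taken linear, $\omega(a)=Ca$, which is exactly the upgrade you carry out (with the pointwise comparison $\sqrt{g_s(v,v)}\le(1+C|s-s'|)\sqrt{g_{s'}(v,v)}$ on an enlarged compact set, integration along a short $d_s$-geodesic, and $\log(1+x)\le x$). Your version just makes explicit the uniform ellipticity and the choice of $r$ keeping geodesics inside the enlarged compact set, which the paper leaves implicit by reference to Lemma \ref{lem:cont-cond-satisfied}.
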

\begin{proof}
	Observe that, assuming $(s,p)\mapsto g_{ij}(p)$ is Lipschitz instead of merely continuous, we may choose $\omega(a)=Ca$ for some $C=C(J,K)$ in the proof of \eqref{eq:cont} in Lemma \ref{lem:cont-cond-satisfied}. This directly implies \eqref{eq:lip}.
\end{proof}

\section{Lorentzian length space structure of $hI\times_\mathcal FX$} \label{sec:lorentzian_length_space}

In this section we prove Theorems \ref{thm:push-up} and \ref{thm:glob_hyp} as well as Corollary \ref{cor:lor-len-sp}.

\subsection{Modifying causal curves via Caratheodory theory of ODE's}

Throughout this subsection we assume \eqref{eq:lip}. We prove Theorem \ref{thm:push-up} by reducing it to the two dimensional case and using Caratheodory theory of ODE's via the following proposition. We refer the reader to Appendix \ref{sec:ODE} for the results of Caratheodory theory of ODE's needed below.

\begin{prop}\label{prop:max-curve-modify}
Suppose $\gamma=(\alpha,\beta):q\curvearrowright p$ is a causal curve with $\tau(\gamma)>0$ defined on an interval $J$. Then there exists an absolutely continuous increasing function $y:J\to I$ with $y(J)=\alpha(J)$ such that
\begin{align*}
h(y_t,\beta_t)^2y'(t)^2-v_\beta(y_t,t)^2=c\in (0,\tau(q,p)]
\end{align*}
for all $t\in J$.
\end{prop}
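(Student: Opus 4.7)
The plan is a shooting argument on the ODE
\[
y'(t) \;=\; \frac{\sqrt{c + v_\beta(y, t)^2}}{h(y, \beta_t)}, \qquad y(a) = \alpha_a,
\]
parametrized by $c > 0$. Solutions of this ODE correspond exactly to absolutely continuous increasing $y \colon J \to I$ for which the curve $(y, \beta)$ has constant Lorentzian squared speed $c$. The goal is then to find a value of $c$ for which the solution $y = y_c$ exists on all of $J = [a,b]$ and reaches $\alpha_b$ at $t = b$.

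First I would reduce to the case of compact $J$ (the general case following by exhaustion by compact subintervals), which makes $\alpha(J)$ and $\beta(J)$ relatively compact and allows the uniform version of \eqref{eq:lip} to apply. The next step is to verify that $F_c(t, y) \defeq \sqrt{c + v_\beta(y, t)^2}/h(y, \beta_t)$ satisfies the hypotheses of the Carath\'eodory existence/uniqueness theory collected in Appendix \ref{sec:ODE}: $F_c$ is measurable in $t$, continuous in $y$ by Lemma \ref{lem:lip_cont}, and in fact \emph{Lipschitz} in $y$ uniformly in $t$, using Corollary \ref{cor:lip_cont} for the $v_\beta$-term together with the local Lipschitz control on $h$ in the time variable. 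This yields, for each $c > 0$, a unique maximal solution $y_c$ with continuous dependence on $c$.

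I would then define the shooting map $\Phi(c) \defeq y_c(b)$ on the (open) set of $c$ for which $y_c$ extends to $[a,b]$ inside $\alpha(J)$. For $c = 0$ the causality inequality $h(\gamma_t)\alpha'(t) \ge v_\beta(\alpha_t, t)$ a.e.\ exhibits $\alpha$ as a supersolution of the $c=0$ ODE, and the hypothesis $\tau(\gamma) > 0$ upgrades this to a strict inequality on a set of positive measure, so that comparison gives $y_0(b) < \alpha_b$; by continuous dependence, $\Phi(c) < \alpha_b$ for all sufficiently small $c > 0$. In the other direction, the pointwise lower bound $y_c'(t) \ge \sqrt{c}/\sup_{J_0 \times K} h$ forces $y_c$ to reach $\alpha_b$ once $c$ is large enough. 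The intermediate value theorem applied to $\Phi$ then produces the desired $c^\ast$ with $y_{c^\ast}(b) = \alpha_b$. The upper bound $c^\ast \le \tau(q,p)$ stated in the proposition follows because $(y_{c^\ast}, \beta)$ is itself a causal curve from $q$ to $p$, so its Lorentzian length $\sqrt{c^\ast}\,|J|$ cannot exceed $\tau(q,p)$.

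The main obstacle I anticipate is ensuring the ODE is well-posed uniformly in $c$ near $c = 0$ and that $y_c$ does not exit $\alpha(J)$ prematurely, so that both endpoints of the shooting argument can be taken inside a common compact region where estimates from \eqref{eq:lip} are uniform. The log-Lipschitz condition is indispensable here: Corollary \ref{cor:lip_cont} promotes the mere continuity of $s \mapsto v_\beta(s, t)$ to Lipschitz continuity with locally uniform constants, which is exactly what is needed to make $F_c$ Lipschitz in $y$ and invoke the uniqueness and continuous-dependence results of Appendix \ref{sec:ODE}. Without \eqref{eq:lip} the shooting function $\Phi$ would potentially be multivalued and the strict inequality $\Phi(c) < \alpha_b$ for small $c > 0$ would be difficult to obtain, undermining the IVT step.
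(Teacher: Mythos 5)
Your core strategy coincides with the paper's: the ODE $y'=\sqrt{c+v_\beta(y,t)^2}/h(y,\beta_t)$ is exactly the paper's $\Phi_\eps$ with $c=\eps^2$, the well-posedness is obtained from the same Carath\'eodory--Lipschitz theory of Appendix \ref{sec:ODE} via Corollary \ref{cor:lip_cont} and \eqref{eq:lip-h-time-var}, and the lower endpoint of the shooting argument ($y_0(b)<\alpha_b$, via the comparison theorem plus the fact that $y_0\equiv\alpha$ would force $\tau(\gamma)=0$) is the paper's argument verbatim. Where you diverge is the upper endpoint. The paper does not send $c\to\infty$: it evaluates the flow at the single parameter value $\eps=\tau(q,p)$ and shows $y_{\tau(q,p)}(1)\ge b$ by contradiction -- if not, concatenating $(y_{\tau(q,p)},\beta)$ with the vertical segment up to $p$ produces a causal curve from $q$ to $p$ of Lorentzian length strictly exceeding $\tau(q,p)$. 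This choice buys two things at once: the parameter ranges over the compact interval $[0,\tau(q,p)]$, so continuous dependence plus the intermediate value theorem apply without any worry that the solution exits $\alpha(J)$ (precisely the ``main obstacle'' you flag, which your large-$c$ speed bound does not by itself resolve, since for large $c$ the solution may leave $\alpha(J)$, and indeed the compact region where your Lipschitz estimates hold, before $t=b$, so your shooting map $\Phi$ need not be defined there); and it delivers the stated range of the constant directly, since the crossing occurs at some $\eps_b\in(0,\tau(q,p)]$.

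On that last point, your a posteriori derivation of the bound is not valid as written: from $(y_{c^\ast},\beta):q\curvearrowright p$ being causal you get $\sqrt{c^\ast}\,|J|\le\tau(q,p)$, i.e.\ $\sqrt{c^\ast}\le\tau(q,p)/|J|$, which does not give $c^\ast\le\tau(q,p)$ in general (there is an unavoidable normalization here: even in the paper's proof, with $J=[0,1]$, the constant is $\eps_b^2$ with $\eps_b\in(0,\tau(q,p)]$, so the bound is really on the square root of the constant). I would therefore replace your large-$c$ step by the paper's evaluation at $\eps=\tau(q,p)$ and the vertical-segment concatenation; with that modification the rest of your argument goes through as you describe.
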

\begin{remark}\label{rmk:obs_on_y}
	\begin{itemize}
		\item[(a)] In particular $(y,\beta)$ is a time-like curve with the same start and end-points as $\gamma$, and 
		\item[(b)] the Lipschitz constant of $y$ is at most $\displaystyle \frac{\|v_\beta\|_{L^\infty(\alpha(J)\times J)}+\tau(\gamma)}{\underset{\alpha(J)\times\beta(J)}{\inf} f}$ (which can be $+\infty$ if $\beta$ is not Lipschitz).
	\end{itemize}
\end{remark}
\begin{proof}
	Without loss of generality we may assume that $J=[0,1]$. Denote $\gamma_0=(a,x)$ and $\gamma_1=(b,y)$. Given $\eps \in [0,\tau(\gamma)]$ we define $\Phi_\eps:I\times [0,1]\to \R$ by 
	\begin{align*}
		\Phi_\eps(s,t)=\frac{(v_\beta(s,t)^2+\eps^2)^{1/2}}{h(s,\beta_t)}.
	\end{align*}

By \eqref{eq:lip}, \eqref{eq:lip-h-time-var} and Lemma \ref{lem:lip_cont} there exists $L\in L^1(0,1)$ such that 
\begin{align*}
|\Phi_\eps(s,t)-\Phi_\eps(s',t)| \le L(t)|s-s'|,\quad s,s'\in \im(\alpha),
\end{align*}
i.e. $\Phi_\eps$ is Caratheodory-Lipschitz (uniformly in $\eps\in [0,\tau(\gamma)]$). Theorem \ref{thm:car-ODE} implies that the ODE
\begin{align*}
	\left\{ 
	\begin{array}{l}
		y'=\Phi_\eps(y,t) \\
		y(0)=a
	\end{array}
	\right.
\end{align*}
admits a unique absolutely continuous solution $y_\eps$ which varies continuously with $\eps$. For each $\eps$ the curve $\gamma_\eps=(y_\eps,\beta)$ satisfies

\begin{align*}
	h(y_\eps(t),\beta_t)^2y_\eps'(t)^2-v_\beta(y_\eps(t),t)^2=\eps^2
\end{align*}
and $\tau(\gamma_\eps)=\eps$. We show that
\begin{align}\label{eq:vary-e}
	y_0(1)<b\quad\textrm{and}\quad y_{\tau(q,p)}(1)\ge b.
\end{align}

Since
\begin{align*}
	y_0'-\frac{v_\beta(y_0(t),t)}{h(y_0(t),\beta_t)}=y_0'-\Phi_0(y_0,t)=0\le \alpha'_t-\Phi_0(\alpha,t),
\end{align*}
the comparison principle for ODE's (Theorem \ref{thm:car-comparison}) there exists $c\in [0,1]$ such that $y_0(t)=\alpha(t)$ whenever $t\in [0,c]$ and $y_0(t)<\alpha(t)$ for $t\in (c,1]$. If $c=1$, then $y_0=\alpha$ which implies $\gamma=\gamma_0$, contradicting the fact that $\tau(\gamma)>0$. Thus we have that $y_0(1)<\alpha(1)=b$. To finish the proof of \eqref{eq:vary-e} suppose that $y_{\tau(q,p)}(1)<b$. Then the concatenation $\tilde\gamma$ of $\gamma_{\tau(q,p)}$ and the vertical segment $[y_{\tau(q,p)}(1),b]\times \{y\}$ is a time-like curve connecting $q$ and $p$ with Lorentzian length $\tau(\tilde\gamma)=\tau(\gamma_{\tau(q,p)})+(b-y_{\tau(q,p)})>\tau(q,p)$, which is a contradiction.

We conclude the proof of the proposition using \eqref{eq:vary-e}. Since $y_\varepsilon(1)$ varies continuously in $\eps$, \eqref{eq:vary-e} implies that there exists $\eps_b\in (0,\tau(q,p)]$ for which $y_{\eps_b}(1)=b$. Thus $y=y_{\eps_b}$ is the required function.
\end{proof}

\subsection{$hI\times_{\mathcal{F}}X$ is a Lorentzian length space}

\begin{proof}[Proof of Theorem \ref{thm:push-up}]
Suppose $q\le p\le r$. By the reverse triangle inequality (cf. Remark \ref{rmk:pre_pre_lorentz_length_sp}(2)) $\tau(q,r)\ge \tau(q,p)+\tau(p,r)$. If $q\ll p$ or $p\ll r$, one of the summands is positive and thus $\tau(q,r)>0$. It follows that there exists a causal curve $\gamma:q\curvearrowright r$ with $\tau(\gamma)>0$. Proposition \ref{prop:max-curve-modify} yields the existence of a time-like curve $p\curvearrowright r$, showing that $p\ll r$. This completes the proof.
\end{proof}

Next we focus on Corollary \ref{cor:lor-len-sp}. To establish it we need to show the lower semicontinuity of $\tau$. This, in turn will be an easy consequence of the openness of the relation $\ll$. 

\begin{prop}\label{prop:open-future}
Suppose $q_0,p_0\in hI\times_{\mathcal F}X$, $q_0\ll p_0$. Then there exist neighbourhoods $U$ and $V$ of $q_0$ and $p_0$, respectively, such that $q\ll p$ whenever $q\in U$ and $p\in V$.
\end{prop}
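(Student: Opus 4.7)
The plan is to reduce Proposition~\ref{prop:open-future} to Proposition~\ref{prop:strict-future-open} by producing a \emph{Lipschitz} future-directed time-like curve from $q_0$ to $p_0$ whose Lorentzian speed $|\tau'|$ is bounded below by a positive constant. The key tool will be the ODE-based modification of Proposition~\ref{prop:max-curve-modify}; the log-Lipschitz hypotheses \eqref{eq:lip} and \eqref{eq:lip-h-time-var} are already built into that proposition.

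First I would take any future-directed time-like curve $\gamma_0 = (\alpha, \beta) \colon q_0 \curvearrowright p_0$, which exists since $q_0 \ll p_0$, and reparametrize it so that $\alpha$ is affine on $[0,1]$. Then $\alpha'$ is a positive constant, and the time-like inequality $v_\beta(\alpha_t, t) < h(\gamma_{0,t})\,\alpha'(t)$ holds a.e.\ with the right-hand side uniformly bounded, because $h$ is bounded on the compact image of $\gamma_0$. In particular $v_\beta$ is bounded on $\alpha([0,1]) \times [0,1]$, and by Corollary~\ref{cor:lip_cont} together with the log-Lipschitz condition this in turn gives a Lipschitz bound for $\beta$ in every metric $d_s$ for $s$ in a compact neighbourhood of $\alpha([0,1])$.

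Next I would apply Proposition~\ref{prop:max-curve-modify} to (the still causal) $\gamma_0$. Since $\tau(\gamma_0) > 0$, this produces an absolutely continuous $y\colon [0,1] \to I$ with $y([0,1]) = \alpha([0,1])$ and a constant $c \in (0, \tau(q_0, p_0)]$ such that
\[
h(y_t, \beta_t)^2\, y'(t)^2 - v_\beta(y_t, t)^2 = c, \qquad t \in [0,1].
\]
Thus $\tilde\gamma := (y, \beta)\colon q_0 \curvearrowright p_0$ is a future-directed time-like curve with $|\tau'_{\tilde\gamma}| \equiv \sqrt{c} > 0$. By Remark~\ref{rmk:obs_on_y}(b), the boundedness of $\|v_\beta\|_{L^\infty(\alpha([0,1]) \times [0,1])}$ from the previous step together with the positive infimum of $h$ over the compact image yield a Lipschitz bound for $y$; combined with the Lipschitz control on $\beta$, the full curve $\tilde\gamma$ is Lipschitz.

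Finally, invoking Proposition~\ref{prop:strict-future-open} with $\tilde\gamma$ in place of $\gamma_0$ and $c_0 := \sqrt{c}$ produces open neighbourhoods $U$ of $q_0$ and $V$ of $p_0$ such that for every $q \in U$ and $p \in V$ there exists a future-directed time-like curve $q \curvearrowright p$, i.e.\ $q \ll p$, as required. The main technical issue is verifying that the modified curve $\tilde\gamma$ is Lipschitz; this is what the preliminary affine reparametrization of $\alpha$ is designed to guarantee, since boundedness of $\alpha'$ and the time-like inequality propagate through the Carathéodory ODE of Proposition~\ref{prop:max-curve-modify} to bound $y'$.
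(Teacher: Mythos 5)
Your proposal is correct and follows essentially the same route as the paper: both reduce the statement to Proposition \ref{prop:strict-future-open} by using Proposition \ref{prop:max-curve-modify} (with Remark \ref{rmk:obs_on_y}) to replace a causal/time-like curve $q_0\curvearrowright p_0$ of positive $\tau$-length by a Lipschitz time-like curve with $|\tau'|$ bounded below by a positive constant. The only difference is cosmetic: the paper simply takes a parametrization in which $\alpha$ and $\beta$ are Lipschitz, whereas you make $\alpha$ affine and deduce the Lipschitz bound on $\beta$ from the time-like inequality and Corollary \ref{cor:lip_cont}, which amounts to the same intermediate step.
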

In particular, $I^\pm(p)$ is open for any $p\in hI\times_{\mathcal{F}}X$. 

\begin{proof}
The claim follows from Proposition \ref{prop:strict-future-open} once we establish the existence of a Lipschtiz curve $\gamma_0:q_0\curvearrowright p_0$ with $\tau_{\gamma_0}'\ge c$ for some $c>0$. To this end let $\gamma=(\alpha,\beta):q_0\curvearrowright p_0$ be a causal curve with $\tau(\gamma)>0$ parametrized so that $\alpha$ and $\beta$ are Lipschitz. Then $\gamma_0=(y,\beta)$, where $y$ is given by Proposition \ref{prop:max-curve-modify}, satisfies the required properties, and thus the proof is complete.
\end{proof}

\begin{proof}[Proof of Corollary \ref{cor:lor-len-sp}]
We have that $(hI\times_{\mathcal{F}}X,\le,\ll)$ is a causal set which satisfies the push-up property (Theorem \ref{thm:push-up}). Moreover $\tau$ satisfies the reverse triangle inequality (Remark \ref{rmk:pre_pre_lorentz_length_sp}).

We use the argument in the proof of \cite[Lemma 3.25]{agks21} to show that $\tau$ is lower semicontinuous. Let $p,q\in hI\times_{\mathcal F}X$, $\eps>0$ be arbitrary and assume without loss of generality that $\tau(p,q)>0$. Let $\gamma:[a,b]\to hI\times_{\mathcal F}X$ be a causal curve $p\curvearrowright q$ with $\tau(\gamma)>0$ such that $\tau(\gamma)>\tau(q,p)-\eps/3$. We may find $a<t_1\le t_2<b $ such that $0<\tau(\gamma|_{[a,t_1]}),\tau(\gamma_{[t_2,b]})<\eps/3$. Setting $U\defeq I^-(\gamma(t_1))$ and $V\defeq I^+(\gamma(t_2))$ we have that $U$ and $V$ are open and $(p,q)\in U\times V$, cf. Proposition \ref{prop:open-future}. For $(\tilde p,\tilde q)\in U\times V$ there are timelike curves $\tilde p\curvearrowright \gamma(t_1)$ and $\gamma(t_2)\curvearrowright \tilde q$ and their concatenation with $\gamma|_{[t_1,t_2]}$ is a causal curve $\tilde p\curvearrowright \tilde q$. Thus
\begin{align*}
	\tau(\tilde p,\tilde q)\ge \tau(\gamma|_{[t_1,t_2]})>\tau(\gamma)-\eps/2-\eps/2=\tau(p,q)-\eps.
\end{align*}
This proves the claimed lower semicontinuity. Thus we have that $hI\times_{\mathcal F}X$ is a causally path connected Lorentzian pre-length space. Proposition \ref{prop:causality_stable} and Corollary \ref{cor:loc-comp-causal-diamond} implies that $hI\times_{\mathcal F}X$ is locally causally closed. For any $p=(a,x)\in hI\times_\mathcal FX$ there exists $h>0$ such that $[a-h,a+h]\subset I$, and the chronological diamond $I_p=((a-h,x),(a+h,x))$ is a localizing neighbourhood of $p$, cf. Corollary \ref{cor:loc-comp-causal-diamond}. Finally, Proposition \ref{prop:lor=var} implies that $\tau(q,p)=\sup_{\gamma:q\curvearrowright p} L_\tau(\gamma)$, so that it is a Lorentzian length space.
\end{proof}

Before we consider the global hyperbolicity of the generalized Lorentzian product, we record here the fact that it is always strongly causal.

\begin{prop}\label{prop:strongly-local}
$hI\times_\mathcal FX$ strongly causal: chronological diamonds $I(p,q)$ form a basis of the topology of $I\times X$.
\end{prop}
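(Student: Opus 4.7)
The plan is to show that every open neighbourhood $W$ of any point $p_0 = (s_0, x_0) \in I \times X$ contains a chronological diamond around $p_0$; combined with the openness of chronological diamonds (a consequence of Proposition \ref{prop:open-future}), this is exactly the basis condition. Since the conformal metric $(d_{\mathcal F, h})_{h^{-1}}$ generates the product topology on $I \times X$ (Section \ref{sec:gen-prod-metr}), it suffices to find, for any given $\delta > 0$, points $p, q$ with $p_0 \in I(p,q) \subset B_{(d_{\mathcal F, h})_{h^{-1}}}(p_0, \delta)$.

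For the construction, I would take $p \defeq (s_0 - \eta, x_0)$ and $q \defeq (s_0 + \eta, x_0)$ for $\eta > 0$ small enough that $[s_0 - \eta, s_0 + \eta] \subset I$. The vertical segment $\gamma_0(t) = (t, x_0)$ on $[s_0 - \eta, s_0 + \eta]$ has $\beta \equiv x_0$ constant, whence $v_\beta \equiv 0$, and the length element $h(t, x_0)^2 \alpha'(t)^2 - v_\beta(\alpha_t,t)^2 = h(t,x_0)^2$ is strictly positive; so $\gamma_0$ is timelike. This gives $p \ll p_0 \ll q$ and hence $p_0 \in I(p,q)$.

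To bound the diameter of the diamond I would apply the length estimate of Remark \ref{rmk:causal-curve-length-bd}. For any $r = (t, y) \in I(p, q)$, joining $p$ to $r$ by a timelike (hence causal) curve $\gamma_1$ yields
\[
(d_{\mathcal F, h})_{h^{-1}}(p, r) \le \ell_{(d_{\mathcal F, h})_{h^{-1}}}(\gamma_1) \le \sqrt{2}\,(t - (s_0 - \eta)) < 2\sqrt{2}\,\eta,
\]
since $t \in (s_0 - \eta, s_0 + \eta)$; while $\gamma_0|_{[s_0 - \eta, s_0]}$ itself has $(d_{\mathcal F,h})_{h^{-1}}$-length equal to $\eta$ (its tangent has unit norm in the conformal metric), so $(d_{\mathcal F, h})_{h^{-1}}(p_0, p) \le \eta$. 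The triangle inequality then gives $(d_{\mathcal F, h})_{h^{-1}}(p_0, r) \le (1 + 2\sqrt{2})\,\eta$, and choosing $\eta < \delta/(1 + 2\sqrt{2})$ ensures $I(p,q) \subset B_{(d_{\mathcal F,h})_{h^{-1}}}(p_0, \delta) \subset W$.

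There is no serious obstacle in this argument; its decisive ingredient is the a priori spatial spread bound of Remark \ref{rmk:causal-curve-length-bd}, which converts temporal variation of causal curves into metric length in the conformally weighted metric. The condition \eqref{eq:lip} is not needed for the construction of the small diamond itself; it enters only through Proposition \ref{prop:open-future}, which is invoked to guarantee that each $I(p,q)$ is open.
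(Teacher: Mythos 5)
Your proposal is correct and follows essentially the same route as the paper's proof: the paper also takes the vertical diamond $I((a-h,x),(a+h,x))$ around the point and uses the length bound of Remark \ref{rmk:causal-curve-length-bd} (via Corollary \ref{cor:loc-comp-causal-diamond}) to confine it to a small $(d_{\mathcal F,h})_{h^{-1}}$-ball, concluding with the openness of chronological diamonds. You have merely written out explicitly the computations the paper leaves implicit.
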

\begin{proof}
Every neighbourhood $U$ of a point $p=(a,x)$ contains the chronological diamond $I((a-h,x),(a+h,x))\ni p$ for small enough $h>0$ by Corollary \ref{cor:loc-comp-causal-diamond} and Remark \ref{rmk:causal-curve-length-bd}. Since chronological diamonds are open, it follows that they form a basis of the topology of $hI_\mathcal FX$. 
\end{proof}

\subsection{Global hyperbolicity of $hI\times_{\mathcal{F}}X$} \label{sec:glob_hyp}
Recall the metric $(d_{\mathcal F,h})_{h\inv}$ on $I\times X$ from Section \ref{sec:gen-prod-metr}. Global hyperbolicity is connected to the properness of this metric.

\begin{lemma}\label{lem:cpt-diamonds}
Let $J\subset I$ be compact and denote $\mathcal F_J=\{d_s\}_{s\in J}$. If $(J\times X,(d_{\mathcal F_J,h})_{h\inv})$ is proper, then every causal diamond $J(p,q)\subset hI\times_\mathcal FX$ with $p,q\in J\times X$ is compact.
\end{lemma}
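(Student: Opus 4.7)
The plan is to use Corollary \ref{cor:loc-comp-causal-diamond}(a), which reduces compactness of $J(p,q)$ to showing it is contained in a compact set. So the goal becomes a properness-type estimate: every point of $J(p,q)$ lies within a fixed $(d_{\mathcal F_J,h})_{h\inv}$-distance of $p$.

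Write $p=(a,x)$ and $q=(b,y)$. Since $p\le q$ requires $a\le b$, and $J\subset \R$ is an interval containing both $a$ and $b$, we have $[a,b]\subset J$. For any $q'=(c,z)\in J(p,q)$ choose a future directed causal curve $\gamma=(\alpha,\beta)\co p\curvearrowright q'$. Because $\alpha$ is monotone from $a$ to $c\in[a,b]$, the curve $\gamma$ lies inside $[a,b]\times X\subset J\times X$. Remark \ref{rmk:causal-curve-length-bd} then gives $\ell_{(d_{\mathcal F,h})_{h\inv}}(\gamma)\le \sqrt{2}(c-a)\le \sqrt{2}(b-a)$. Since $\gamma$ lies in $J\times X$ and the length functionals defining $d_{\mathcal F,h}$ and $d_{\mathcal F_J,h}$ share the same integrand (depending only on the values of $h$, $\alpha'$ and $v_\beta(\alpha_t,t)$ with $\alpha_t\in J$), we also have $\ell_{(d_{\mathcal F_J,h})_{h\inv}}(\gamma)\le \sqrt{2}(b-a)$, and therefore $(d_{\mathcal F_J,h})_{h\inv}(p,q')\le \sqrt{2}(b-a)$.

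Consequently $J(p,q)\subset \overline{B}_{(d_{\mathcal F_J,h})_{h\inv}}\bigl(p,\sqrt{2}(b-a)\bigr)$, which by the properness assumption is compact in $(J\times X,(d_{\mathcal F_J,h})_{h\inv})$. Because $(d_{\mathcal F_J,h})_{h\inv}$ generates the product topology on $J\times X$, this closed ball is also compact as a subset of $I\times X$. Applying Corollary \ref{cor:loc-comp-causal-diamond}(a) to the compact ambient set containing $J(p,q)$ then yields the claimed compactness of $J(p,q)$.

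The only subtle point is the identification of the two length functionals on curves that remain in $J\times X$ — once one observes that the integrand in $L_{\mathcal F,h}$ only probes values $d_s$ with $s$ in the image of $\alpha$, this is immediate and no further work is required; the proof is otherwise a direct consequence of the length estimate for causal curves recorded in Remark \ref{rmk:causal-curve-length-bd}.
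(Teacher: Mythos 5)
Your proof is correct and follows essentially the same route as the paper: bound the $(d_{\mathcal F,h})_{h\inv}$-length of causal curves emanating from $p$ via Remark \ref{rmk:causal-curve-length-bd}, use properness of $(J\times X,(d_{\mathcal F_J,h})_{h\inv})$ to trap $J(p,q)$ in a compact set, and conclude with Corollary \ref{cor:loc-comp-causal-diamond}(a). The only difference is that you spell out the identification of the $\mathcal F$- and $\mathcal F_J$-length functionals on curves confined to $J\times X$, a point the paper's proof uses implicitly.
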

\begin{proof}
	
Let $p=(a,x)$ and $q=(b,y)\in J\times X$. Since $(J\times X,(d_{\mathcal F_J,h})_{h\inv})$ is proper there exists a compact set $K\subset X$ such that any absolutely continuous curve starting at $p$ with length at most $\sqrt 2(b-a)$ lies in $J\times K$. By Remark \ref{rmk:causal-curve-length-bd} every causal curve $\gamma:p\curvearrowright q'$ with $q'\in J(p,q)$ has length at most $\sqrt 2(b-a)$, and thus it follows that $J(p,q)\subset J\times K$. By Corollary \ref{cor:loc-comp-causal-diamond}(a) $J(p,q)$ is compact.
\end{proof}

Since $d_{\mathcal F_J,h}$ is a locally compact length metric on $J\times X$, Lemma \ref{lem:properness-criterion} immediately yields the following condition for properness.

\begin{cor}\label{cor:properness-criterion}
	Suppose $h:I\times X\to (0,\infty)$ is continuous and $\mathcal F$ satisfies \eqref{eq:cont}. If $J\subset I$ is compact and
	\begin{equation}\label{eq:properness-criterion}
	\int_0^\infty\frac{v_\beta(\alpha_t,t)}{h(\gamma_t)}\ud t=\infty
	\end{equation}
whenever $\gamma=(\alpha,\beta):[0,\infty)\to J\times X$ is a locally absolutely continuous curve tending to infinity, then $(J\times X,(d_{\mathcal F_J,h})_{h\inv})$ is a proper metric space.
\end{cor}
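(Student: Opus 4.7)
The plan is to apply Lemma \ref{lem:properness-criterion} directly to the locally compact length space $(J\times X, d_{\mathcal F_J, h})$ with conformal weight $\rho = 1/h$. Since $J$ is compact and $h$ is continuous and strictly positive on $J\times X$, the function $1/h$ is a well-defined positive continuous weight on the space. The conclusion of Lemma \ref{lem:properness-criterion} applied in this setting is exactly that $(J\times X, (d_{\mathcal F_J, h})_{h\inv})$ is proper (and geodesic), which is what we want.

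To invoke the lemma I would verify its divergence hypothesis. Let $\gamma = (\alpha, \beta): [0,\infty)\to J\times X$ be locally absolutely continuous and leave every compact subset of $J\times X$. Since $J$ itself is compact, $\beta$ must leave every compact subset of $X$: otherwise $\beta([0,\infty))$ would be contained in some compact $K\subset X$, and then $\gamma$ would lie in the compact product $J\times K$, contradicting the assumption on $\gamma$. Hence $\beta$ tends to infinity in the sense used in Theorem \ref{thm:glob_hyp}, and $\alpha$ is a locally absolutely continuous curve into $I$, so the standing hypothesis \eqref{eq:properness-criterion} applies to the pair $(\alpha,\beta)$.

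From the speed formula recorded at the end of Section \ref{sec:gen-prod-metr} one has the pointwise estimate
\[
\rho(\gamma_t)\,|\gamma_t'|_{d_{\mathcal F_J, h}} = |\gamma_t'|_{(d_{\mathcal F_J, h})_{h\inv}} = \sqrt{\alpha'(t)^2 + \frac{v_\beta(\alpha_t, t)^2}{h(\gamma_t)^2}} \ge \frac{v_\beta(\alpha_t, t)}{h(\gamma_t)}
\]
for a.e.\ $t\in[0,\infty)$. Integrating and using \eqref{eq:properness-criterion} yields $\int_0^\infty \rho(\gamma_t)\,|\gamma_t'|_{d_{\mathcal F_J, h}}\,\ud t = +\infty$, which is precisely the divergence hypothesis of Lemma \ref{lem:properness-criterion}. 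That lemma then delivers properness of $(J\times X, (d_{\mathcal F_J, h})_{h\inv})$. There is no real obstacle here: the argument is essentially an unwinding of definitions, with the only substantive point being the compactness of $J$, which is what converts ``$\gamma$ leaves every compact subset of $J\times X$'' into ``$\beta$ tends to infinity in $X$'' and thus allows the hypothesis \eqref{eq:properness-criterion} to apply.
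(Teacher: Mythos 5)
Your proof is correct and follows the paper's route exactly: the paper likewise treats $(J\times X,d_{\mathcal F_J,h})$ as a locally compact length space and applies Lemma \ref{lem:properness-criterion} with conformal weight $1/h$, the divergence hypothesis being supplied by \eqref{eq:properness-criterion} via the speed identity $|\gamma_t'|_{(d_{\mathcal F_J,h})_{h\inv}}=\sqrt{\alpha'(t)^2+v_\beta(\alpha_t,t)^2/h(\gamma_t)^2}\ge v_\beta(\alpha_t,t)/h(\gamma_t)$. Your additional observation that compactness of $J$ turns ``$\gamma$ leaves every compact subset of $J\times X$'' into ``$\beta$ tends to infinity in $X$'' is a harmless (and useful) unwinding of the same argument, which the paper leaves implicit by calling the conclusion immediate.
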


\begin{proof}[Proof of Theorem \ref{thm:glob_hyp}]
It follows from Remark \ref{rmk:causal-curve-length-bd} that $hI\times_\mathcal FX$ is totally non-imprisoning. By \eqref{eq:inf-length} and Corollary \ref{cor:properness-criterion} we have that $(J\times X,(d_{\mathcal F_J,h})_{h\inv})$ is proper for each compact subinterval $J\subset I$. By Lemma \ref{lem:cpt-diamonds} causal diamonds in $hI\times_\mathcal FX$ are compact, and thus $hI\times_\mathcal FX$ is globally hyperbolic.
\end{proof}

\subsection{Regularity of conformal generalized Lorentzian products}\label{sec:regular}
Throughout this subsection we assume that $d$ is a given locally compact length metric on $X$, and that $h,\rho:I\times X\to (0,\infty)$ are locally Lipschitz functions. For each $s\in I$, consider the conformally weighted length metric $d_s\defeq d_{\rho(s,\cdot)}$ on $X$, satisfying 
\begin{align*}
	|\beta_t'|_{d_s}=\rho(s,\beta_t)|\beta_t'|_d\quad a.e.\ t
\end{align*}
for every $\beta\in AC(X,d)$. Set $\mathcal F=\{d_s\}_{s\in I}$. It follows from \eqref{eq:rho-length} that
\begin{align}\label{eq:conformal-gen-speed}
v_\beta(s,t)=\rho(s,\beta_t)v_\beta(t),\quad (s,t)\in I\times [a,b]
\end{align}
for every absolutely continuous $\beta:[a,b]\to X$. Here
\[
v_\beta(t)=\limsup_{h\to 0}\frac 1h\int_t^{t+h}|\beta_u'|_d\,\ud u,\quad t\in [a,b].
\]
\begin{lemma}
The family $\mathcal F$ defined above satisfies \eqref{eq:lip}.
\end{lemma}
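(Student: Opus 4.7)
The plan is to exploit the explicit conformal formula \eqref{eq:conformal-gen-speed} (and more precisely $\ell_{d_s}(\beta) = \int \rho(s,\beta_t)|\beta_t'|_d\,\ud t$, which follows from \eqref{eq:rho-length}) to directly compare $d_s$-lengths of curves as $s$ varies, then take infima. The main step is a confinement argument ensuring that for points $x,y\in K$ close enough with respect to some $d_s$, near-$d_{s}$-geodesics stay inside a fixed slightly enlarged compact set $K'$ on which $\rho$ is uniformly positive and uniformly Lipschitz in $s$.

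First I fix compact $J\subset I$ and $K\subset X$. Using local compactness of the length metric $d$, I choose a compact set $K'\subset X$ and $r_0>0$ such that $B_d(x,r_0)\subset K'$ for every $x\in K$, and I set
\[
0<c_1\defeq\min_{J\times K'}\rho,\quad c_2\defeq\max_{J\times K'}\rho<\infty,\quad L\defeq \mathrm{Lip}\bigl(\rho|_{J\times K'}\bigr).
\]
I then pick $r>0$ small enough -- say $r\le c_1^2 r_0/(4c_2)$ -- and claim that for any $x,y\in K$ with $\inf_{s\in J}d_s(x,y)<r$ and any $s\in J$, there exists an absolutely continuous curve $\beta$ from $x$ to $y$, contained in $K'$, with $\ell_{d_s}(\beta)<d_s(x,y)+\varepsilon$ for arbitrarily small $\varepsilon>0$. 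The argument is a standard length-space confinement: while an a.c.\ curve $\beta$ starting at $x$ lies in $K'$, the identity $|\beta_t'|_{d_s}=\rho(s,\beta_t)|\beta_t'|_d\ge c_1|\beta_t'|_d$ forces $d(x,\beta_t)\le\ell_{d_s}(\beta|_{[0,t]})/c_1$, so any curve of $d_s$-length less than $c_1 r_0$ cannot escape $B_d(x,r_0)\subset K'$. A quick check using $\rho\le c_2$ together with the assumption $d_{s_0}(x,y)<r$ for some $s_0\in J$ shows $d_s(x,y)\le (c_2/c_1)r$ for every $s\in J$, which is well below $c_1 r_0$ by choice of $r$; hence near-$d_s$-geodesics stay in $K'$.

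Third, given such a $\beta\subset K'$ and any $s'\in J$, the pointwise bound $\rho(s',\beta_t)\le \rho(s,\beta_t)+L|s-s'|\le \rho(s,\beta_t)(1+(L/c_1)|s-s'|)$ yields
\[
d_{s'}(x,y)\le \ell_{d_{s'}}(\beta)\le \left(1+\tfrac{L}{c_1}|s-s'|\right)\ell_{d_s}(\beta).
\]
Letting $\varepsilon\to 0$ and taking the infimum over near-geodesics gives $d_{s'}(x,y)\le(1+(L/c_1)|s-s'|)d_s(x,y)$; interchanging $s$ and $s'$ (which is legitimate since both satisfy the confinement via the argument above) yields the reverse bound, and the log-Lipschitz inequality \eqref{eq:lip} with $C=L/c_1$ follows from $\log(1+a)\le a$.

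The only subtle point is the confinement step, since $d$ is only assumed locally compact and length rather than geodesic/proper. The length-space structure together with the positive lower bound $\rho\ge c_1$ on $K'$ handles this: a curve could escape $K'$ only after traversing $d$-length at least $r_0$, costing $d_s$-length at least $c_1 r_0$, which is incompatible with the chosen threshold $r$. Once confinement is in place, the rest of the proof is a direct length comparison.
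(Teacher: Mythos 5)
Your proof is correct and follows essentially the same route as the paper: enlarge $K$ to a compact $K'$, use the uniform lower bound on $\rho$ over $J\times K'$ to confine (near-)minimizing curves for short distances inside $K'$, and then compare $d_s$- and $d_{s'}$-lengths of such a curve via the Lipschitz bound on $\rho$ in the time variable, with constant $C=\mathrm{Lip}(\rho|_{J\times K'})/\inf_{J\times K'}\rho$. The only cosmetic difference is that you work with $\varepsilon$-near-geodesics and let $\varepsilon\to 0$ (and control all $d_s(x,y)$ by the crude bi-Lipschitz bound $c_2/c_1$), whereas the paper extracts an actual $d_{s_0}$-geodesic in $K'$ by compactness and bootstraps through $s_0$; both are sound.
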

\begin{proof}
Let $J\subset I$ and $K\subset X$ be compact. Let $r_0=r_0(K)>0$ be such that $K'=\overline B_{d}(K,r_0)$ is compact, and set $A=A(J,K)=\inf_{J\times K'}\rho$. Observe that, if $s\in J$ and $\beta$ is a curve in $X$ such that $\ell_{d_s}(\beta)<Ar_0$, then $\beta$ lies in $K'$. 

Now suppose $x,y\in K$ and $\displaystyle \inf_{s\in J}d_s(x,y)<Ar_0$. Then there is $s_0\in J$ such that $d_{s_0}(x,y)<Ar_0$. By the observation above any sequence  $(\beta_j)$ of constant speed curves $x\curvearrowright y$ with $\ell_{d_{s_0}}(\beta_j)\to d_{s_0}(x,y)$ eventually lies in $K'$ and thus subconverges to a $d_s$-geodesic $\beta:x\curvearrowright y$ lying in $K'$. It follows that
\begin{align*}
d_{s}(x,y)\le &\int\rho(s,\beta_t)|\beta_t'|\ud t\le d_{s_0}(x,y)+\int|\rho(s,\beta_t)-\rho(s_0,\beta_t)||\beta_t'|\ud t \\
\le &d_{s_0}(x,y)\Big(1+\frac{\LIP(\rho|_{J\times K'})}{A}|s-s_0|\Big)
\end{align*}
for any $s\in J$. Set $C=\frac{\LIP(\rho|_{J\times K'})}{A}$ and $r=Ar_0/(1+C|J|)$. In particular if $\displaystyle \inf_{s\in J}d_s(x,y)<r_0$, then $d_s(x,y)<Ar_0$ for all $s\in J$, and the argument above yields 
\[
\frac{d_{s'}(x,y)}{d_s(x,y)}\le 1+C|s-s'|,\quad x,y\in K,\ s,s'\in J,
\]
proving \eqref{eq:lip}
\end{proof}

We prove Theorem \ref{thm:regular-loc-lor-len-sp} as a corollary of the following result.

\begin{thm}\label{thm:causal-character}
Let $\gamma:[a,b]\to hI\times_{\mathcal{F}}X$ be a maximizing causal curve with $L_\tau(\gamma)>0$. Then $\gamma$ is time-like, i.e. $\tau_\gamma'(t)>0$ a.e. $t\in [a,b]$.
\end{thm}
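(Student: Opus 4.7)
The plan is to reduce to the regularity of maximizing causal curves in a two-dimensional Lipschitz Lorentzian manifold, where the result is known \cite{graf-ling18,lange-lytchak-sam21}, and then identify the induced structures via Corollary \ref{cor:compatible}. If $\beta$ is constant on $[a,b]$, then $v_\beta\equiv 0$ and $\tau_\gamma'=h(\gamma_t)|\alpha'(t)|>0$ a.e., so $\gamma$ is already time-like. Assume therefore that $\beta$ is non-constant and, after reparametrization, that $\alpha=\id$.

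Set $\displaystyle s(t)=\int_a^t|\beta'_\tau|_d\,\ud\tau$ and $\ell=s(b)>0$. Since $\beta$ is constant on every interval on which $s$ is constant, the formula $\tilde\beta(s(t))=\beta(t)$ unambiguously defines a map $\tilde\beta:[0,\ell]\to X$ which is $1$-Lipschitz with respect to $d$. Define the locally Lipschitz functions $\tilde h,\tilde\rho:I\times[0,\ell]\to(0,\infty)$ by $\tilde h(t,u)=h(t,\tilde\beta(u))$, $\tilde\rho(t,u)=\rho(t,\tilde\beta(u))$, and consider the Lipschitz Lorentzian metric $g=-\tilde h^2\ud t^2+\tilde\rho^2\ud u^2$ on $I\times[0,\ell]$. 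By \eqref{eq:conformal-gen-speed} one has $v_\beta(\alpha_t,t)=\tilde\rho(t,s(t))s'(t)$ a.e., so the projected curve $\tilde\gamma=(\id,s)$ satisfies $\tau_{\tilde\gamma}'=\tau_\gamma'$ a.e.; in particular $L_\tau(\tilde\gamma)=L_\tau(\gamma)>0$. By Corollary \ref{cor:compatible} the Lorentzian length structure of $(I\times[0,\ell],g)$ agrees with the generalized product $\tilde h I\times_{\{d_{\tilde\rho(s,\cdot)}\}}[0,\ell]$.

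The key step is to transfer maximality: given any causal curve $\eta=(\alpha_1,u_1):\tilde\gamma_a\curvearrowright\tilde\gamma_b$ in $(I\times[0,\ell],g)$, the lift $(\alpha_1,\tilde\beta\circ u_1)$ shares the endpoints of $\gamma$ in $hI\times_{\mathcal F}X$, and the $1$-Lipschitz bound on $\tilde\beta$ gives $|(\tilde\beta\circ u_1)_t'|_d\le|u_1'(t)|$. Hence
\begin{align*}
h(\alpha_1,\tilde\beta\circ u_1)^2\alpha_1'^2-v_{\tilde\beta\circ u_1}(\alpha_1,\cdot)^2\ge\tilde h^2\alpha_1'^2-\tilde\rho^2 u_1'^2\ge 0,
\end{align*}
so the lift is causal with $\tau((\alpha_1,\tilde\beta\circ u_1))\ge\tau(\eta)$. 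The maximality of $\gamma$ in $hI\times_{\mathcal F}X$ yields $\tau(\tilde\gamma)=\tau(\gamma)\ge\tau(\eta)$, so $\tilde\gamma$ is maximizing in $(I\times[0,\ell],g)$.

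Applying the regularity theorem for Lipschitz Lorentzian manifolds \cite{graf-ling18,lange-lytchak-sam21}, a maximizing causal curve with positive Lorentzian length in this setting is time-like a.e. Thus $\tau_{\tilde\gamma}'>0$ a.e., and consequently $\tau_\gamma'>0$ a.e., as required. The main obstacle is the lifting argument of the third paragraph: the Lipschitz regularity cited only applies to manifold-type spaces, so one must ensure that the $2$D Lorentzian manifold genuinely dominates all competitors, which rests on the $1$-Lipschitz property of the arc-length reparametrization $\tilde\beta$.
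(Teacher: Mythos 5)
Your proof is correct and takes essentially the same route as the paper: both reduce the problem to a two-dimensional Lipschitz Lorentzian metric built from the data along $\beta$, transfer maximality by lifting causal competitors back to $hI\times_{\mathcal F}X$, and then invoke the regularity of maximizers for Lipschitz metrics \cite{graf-ling18,lange-lytchak-sam21}. The only difference is cosmetic: the paper normalizes $\beta$ to constant $d$-speed and works on the rectangle $\alpha([a,b])\times[a,b]$ with metric $-h(s,\beta_t)^2\,\ud s^2+v_\beta(s,t)^2\,\ud t^2$, whereas you reparametrize by arc-length and use the $1$-Lipschitz property of $\tilde\beta$ to obtain the lifting inequality.
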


Since reparametrization does not change causal character, we may assume that $\gamma=(\alpha,\beta)$ parametrized such that $|\beta_t'|_d=c$ a.e. for some constant $c>0$, and $\alpha$ is Lipschitz. Consider the space $Q=\alpha([a,b])\times [a,b]\subset \R^2$ equipped with with the Lorentzian metric
\begin{align*}
	g_Q=-h(s,\beta_{t})^2\ud s^2+v_\beta(s,t)^2\ud t^2.
\end{align*}
Note that $v_\beta(s,t)=c\rho(s,\beta_t)$, cf. \eqref{eq:conformal-gen-speed}.
\begin{lemma}
	The Lorentzian metric $g_Q$ is Lipschitz.
\end{lemma}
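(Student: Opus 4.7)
The plan is to verify that the only nonzero coefficients of $g_Q$, namely $g_{ss}(s,t) = -h(s,\beta_t)^2$ and $g_{tt}(s,t) = v_\beta(s,t)^2$, are each Lipschitz on the compact rectangle $Q$. The chain of reductions is short: first, reduce $g_{tt}$ to a function of $\rho$ via \eqref{eq:conformal-gen-speed}, giving $v_\beta(s,t)^2 = c^2\rho(s,\beta_t)^2$; second, show that $(s,t)\mapsto (s,\beta_t)$ is Lipschitz from $Q$ into a compact subset of $I\times X$; third, use the local Lipschitz regularity of $h$ and $\rho$ to conclude.

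For the second step, the key observation is that the parametrization chosen just before the statement of the lemma forces $\beta$ itself to be Lipschitz with respect to $d$. Indeed, since $\beta$ is absolutely continuous and $|\beta_t'|_d = c$ a.e., we obtain $d(\beta_t,\beta_{t'})\le \int_{t'}^t|\beta_u'|_d\,\ud u = c|t-t'|$. Combined with the Lipschitz regularity of $\alpha$, the map $\Psi\co Q\to I\times X$ defined by $\Psi(s,t)=(s,\beta_t)$ is then Lipschitz (it is a product of two Lipschitz maps in the coordinates), and its image is contained in the compact set $K\defeq \alpha([a,b])\times\beta([a,b])\subset I\times X$.

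For the third step, pick a compact neighborhood $K'$ of $K$ in $I\times X$. The locally Lipschitz functions $h,\rho$ are globally Lipschitz on $K'$, so their compositions with $\Psi$ are Lipschitz functions on $Q$. Both are continuous and strictly positive on the compact set $K$, hence bounded above and bounded below away from zero there, so the bound $|a^2-b^2|\le (|a|+|b|)\,|a-b|$ shows that $(s,t)\mapsto h(s,\beta_t)^2$ and $(s,t)\mapsto c^2\rho(s,\beta_t)^2$ are Lipschitz on $Q$, establishing the claim.

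I do not anticipate a real obstacle: the content of the lemma is precisely that the particular choice of parametrization made just before its statement (affine $\alpha$, constant $d$-speed $\beta$) promotes the merely locally Lipschitz regularity of $h$ and $\rho$ in both variables to honest Lipschitz regularity of the pulled-back two-dimensional metric, so that Lipschitz-Lorentzian techniques (such as those of \cite{graf-ling18,lange-lytchak-sam21}) become applicable to $(Q,g_Q)$ in the sequel.
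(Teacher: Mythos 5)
Your argument is correct and is essentially the paper's own proof: the constant $d$-speed parametrization makes $\beta$ Lipschitz, so $(s,t)\mapsto h(s,\beta_t)^2$ and $(s,t)\mapsto v_\beta(s,t)^2=c^2\rho(s,\beta_t)^2$ are compositions of Lipschitz maps on compact sets and hence Lipschitz. (Only a cosmetic remark: the Lipschitz regularity of $\alpha$ plays no role in showing $(s,t)\mapsto(s,\beta_t)$ is Lipschitz, since the first coordinate is the identity in $s$.)
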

\begin{proof}
Denote $F(s,t)=h(s,\beta(t))^2$. Since $\beta$ and $h^2|_{\alpha([a,b])\times \beta([a,b])}$ are Lipschitz, it follows that $-F$ is Lipschitz. Similarly $(s,t)\mapsto v_\beta(s,t)^2=c^2\rho(s,\beta_t)^2$ is Lipschitz.
\end{proof}

\begin{lemma}\label{lem:Q-causal-iff-causal}
	A future directed curve $y=(y_0,y_1)$ in $Q$ is causal with respect to $g_Q$ if and only if $\gamma_y\defeq (y_0,\beta\circ y_1)$ is causal in $hI\times_{\mathcal{F}}X$. In this case we have 
	\begin{align*}
		L_{g_Q}(y)=\tau(\gamma_y).
	\end{align*}
\end{lemma}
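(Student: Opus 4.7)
The plan is to unpack both sides of the equivalence in terms of Definition \ref{def:causal_and_timelike} and to apply the reparametrization formula (Lemma \ref{lem:reparam}) to bridge them. Since $\beta$ is Lipschitz (having constant $d$-speed $c$), the composition $\beta \circ y_1$ is absolutely continuous whenever $y_1$ is, and Lemma \ref{lem:reparam} gives
\begin{equation*}
v_{\beta \circ y_1}(s,t) = |y_1'(t)|\, v_\beta(s, y_1(t))
\end{equation*}
for a.e.\ $t$ and every $s \in I$. With this identity in hand, both sides of the claimed equivalence reduce to the same pointwise inequality.

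Concretely, unpacking the causality condition for $\gamma_y = (y_0,\beta \circ y_1)$ in $hI \times_{\mathcal F} X$ and substituting the displayed formula yields
\begin{equation*}
v_{\beta \circ y_1}(y_0(t),t)^2 \le h(\gamma_y(t))^2 y_0'(t)^2
\iff
y_1'(t)^2\, v_\beta(y_0(t), y_1(t))^2 \le h(y_0(t),\beta_{y_1(t)})^2\, y_0'(t)^2,
\end{equation*}
and the right-hand inequality is precisely $g_Q(y',y') \le 0$. Thus $\gamma_y$ is causal in $hI \times_{\mathcal F} X$ iff $y$ is causal in $(Q,g_Q)$. The future-directedness condition $y_0' > 0$ is shared on both sides, so the equivalence is clean. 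The length identity is then the same substitution under the square root:
\begin{equation*}
L_{g_Q}(y) = \int \sqrt{h(y_0,\beta_{y_1})^2 y_0'^2 - v_\beta(y_0,y_1)^2 y_1'^2}\, \ud t = \int \sqrt{h(\gamma_y)^2 y_0'^2 - v_{\beta \circ y_1}(y_0,t)^2}\, \ud t = \tau(\gamma_y).
\end{equation*}

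The only real obstacle I anticipate is that Lemma \ref{lem:reparam} is stated for an absolutely continuous \emph{bijection} $\varphi$, whereas in general the coordinate $y_1$ of a causal curve in $Q$ need not be monotone. I would address this either (i) by observing that the proof of Lemma \ref{lem:reparam} goes through at any Lebesgue point of $v_\beta(s,\cdot)$ without using monotonicity, via the standard metric chain rule $|(\beta \circ y_1)_t'|_{d_s} = |y_1'(t)|\, |\beta_{y_1(t)}'|_{d_s}$ a.e., or (ii) by splitting $[c,d]$ into countably many subintervals on which $y_1$ is monotone (or constant, in which case both sides vanish trivially) and applying the lemma on each piece. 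Either way, the algebraic content of the proof is the single identity for $v_{\beta \circ y_1}$, and the rest is bookkeeping.
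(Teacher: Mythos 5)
Your proof is correct and follows essentially the same route as the paper: Lemma \ref{lem:reparam} gives $v_{\beta\circ y_1}(y_0(t),t)=|y_1'(t)|\,v_\beta(y_0(t),y_1(t))$ a.e., hence $g_Q(y_t',y_t')=-\tau_{\gamma_y}'(t)^2$ a.e., from which both the causality equivalence and the length identity $L_{g_Q}(y)=\tau(\gamma_y)$ follow exactly as you write. Your observation that $y_1$ need not be monotone (so Lemma \ref{lem:reparam}, stated for bijections, does not literally apply) flags a point the paper passes over silently, and either of your remedies --- the pointwise chain rule at Lebesgue points or splitting into monotone pieces --- closes it.
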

\begin{proof}
	By Lemma \ref{lem:reparam} we have that
	\begin{align*}
		g_Q(y_t',y_t')=&-h(y_0(t),\beta(y_1(t)))^2y_0'(t)^2+v_\beta(y_0(t),y_1(t))^2y_1'(t)^2\\
		=&-h(y_0(t),\beta\circ y_1(t))^2y_0'(t)^2+v_{\beta\circ y_1}(y_0(t),t)^2=-\tau_{\gamma_y}'(t)^2
	\end{align*}
a.e. $t$ from which the first claim immediately follows. It also follows that, if $y$ is causal, then
\begin{align*}
	L_{g_Q}(y)=\int \sqrt{-g_Q(y_t',y_t')}\ud t=\int \tau_{\gamma_y}'(t)\ud t=\tau(\gamma_y).
\end{align*}
\end{proof}

\begin{cor}\label{cor:Q-max-curve}
For any $t,t'\in [a,b]$ with $t\le t'$ we have that $x:[t,t']\to Q$, $x(\lambda)=(\alpha_\lambda,\lambda)$ is a maximizing curve, and
\begin{align*}
\tau_Q(x(t),x(t'))=\tau(\gamma|_{[t,t']}).
\end{align*}
\end{cor}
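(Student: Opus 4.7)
The plan is to leverage Lemma \ref{lem:Q-causal-iff-causal}, which furnishes a length-preserving bijection between future directed causal curves $y=(y_0,y_1)$ in $Q$ and the causal curves $\gamma_y=(y_0,\beta\circ y_1)$ in $hI\times_{\mathcal F}X$ whose spatial component factors through $\beta$. First I would observe that the candidate curve $x=(\alpha,\id)$ corresponds under this dictionary to $\gamma_x=(\alpha,\beta\circ\id)=\gamma$, which is causal by hypothesis. Lemma \ref{lem:Q-causal-iff-causal} then yields that $x$ is causal in $Q$ and, after restricting to any subinterval, $L_{g_Q}(x|_{[t,t']})=\tau(\gamma|_{[t,t']})$.

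The only additional ingredient is the standard fact that subsegments of maximizing curves are themselves maximizing. I would prove this using additivity of $\tau$-length under concatenation (or of $L_\tau$, which equals $\tau$ on causal curves by Proposition \ref{prop:lor=var}) together with the reverse triangle inequality: if a causal curve $\eta\co \gamma(t)\curvearrowright\gamma(t')$ satisfied $\tau(\eta)>\tau(\gamma|_{[t,t']})$, splicing $\eta$ into $\gamma$ in place of $\gamma|_{[t,t']}$ would produce a causal curve from $\gamma(a)$ to $\gamma(b)$ of length strictly greater than $\tau(\gamma)=\tau(\gamma(a),\gamma(b))$, contradicting maximality of $\gamma$ on $[a,b]$. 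Hence $\tau(\gamma|_{[t,t']})=\tau(\gamma(t),\gamma(t'))$.

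With this in hand the upper bound is immediate: for any causal $y\co x(t)\curvearrowright x(t')$ in $Q$, the lifted curve $\gamma_y$ is a causal curve in $hI\times_{\mathcal F}X$ from $\gamma(t)=(\alpha_t,\beta_t)$ to $\gamma(t')=(\alpha_{t'},\beta_{t'})$, and Lemma \ref{lem:Q-causal-iff-causal} gives
\[
L_{g_Q}(y)=\tau(\gamma_y)\le\tau(\gamma(t),\gamma(t'))=\tau(\gamma|_{[t,t']}).
\]
Taking the supremum yields $\tau_Q(x(t),x(t'))\le\tau(\gamma|_{[t,t']})$, while the reverse inequality $\tau_Q(x(t),x(t'))\ge L_{g_Q}(x|_{[t,t']})=\tau(\gamma|_{[t,t']})$ is witnessed by $x$ itself. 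This produces both the asserted identity and the maximality of $x|_{[t,t']}$.

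I do not anticipate any real obstacle; the corollary is essentially a translation of ``maximizing restricts to maximizing'' through the bijection of Lemma \ref{lem:Q-causal-iff-causal}. The only point deserving a line of explicit verification is that the correspondence sends the endpoints $x(t),x(t')$ to $\gamma(t),\gamma(t')$, which holds because $\beta\circ\id=\beta$ on $[t,t']$.
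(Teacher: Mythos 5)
Your proof is correct and follows essentially the same route as the paper: both arguments transfer causal curves between $Q$ and $hI\times_{\mathcal F}X$ via Lemma \ref{lem:Q-causal-iff-causal} (using $\gamma_x=\gamma|_{[t,t']}$) and rule out a longer competitor by splicing it into $\gamma$, contradicting its maximality. The only difference is presentational: you isolate ``subsegments of maximizing curves are maximizing'' and verify both inequalities for $\tau_Q(x(t),x(t'))$ explicitly, whereas the paper compresses this into a single contradiction argument.
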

\begin{proof}
	We use the notation of Lemma \ref{lem:Q-causal-iff-causal}. Note that $\gamma_x=\gamma|_{[t,t']}$ and thus $L_{g_Q}(x)=\tau(\gamma|_{[t,t']})$. If $x$ were not maximal, there would exist a causal curve $y:[t,t']\to Q$ with the same end points and $\tau(\gamma_y)=L_{g_Q}(y)>L_{g_Q}(x)=\tau(\gamma|_{[t,t']})$. Since $\gamma_y$ is a causal curve with the same endpoints as $\gamma|_{[t,t']}$, this would contradict the maximality of $\gamma$. 
\end{proof}

\begin{proof}[Proof of Theorem \ref{thm:causal-character}]
Consider the Lorentzian manifold $(Q,g_Q)$ and note that $x=(\alpha,\id):[a,b]\to Q$ is a maximizing curve $(\alpha_a,a)\curvearrowright(\alpha_b,b)$, cf. Corollary \ref{cor:Q-max-curve}. Recall that $\gamma_x=\gamma$ (using the notation of Lemma \ref{lem:Q-causal-iff-causal}). Since $L_{g_Q}(x)=\tau(\gamma)>0$ and $g_Q$ is Lipschitz, it follows that $(\alpha_a,a)\ll_Q(\alpha_b,b)$ and that $x$ is time-like, cf. [Lorentz-meets-Lipschitz, Prop 1.2]. In particular 
\begin{align*}
	0<-g_Q(x_t',x_t')=h(\alpha_t,\beta_t)^2\alpha'(t)^2-v_\beta(\alpha_t,t)^2\quad a.e.\ t,
\end{align*}
showing that $\gamma=(\alpha,\beta)$ is time-like. 
\end{proof}

\begin{proof}[Proof of Theorem \ref{thm:regular-loc-lor-len-sp}]
Since $hI\times_\mathcal F X$ is a Lorentzian length space the only remaining claim to establish is that every point has a localizing neighbourhood where maximizing curves with positive $\tau$-length are time-like. Indeed, given $p=(a,x)\in I\times X$, the localizing neighbourhood $U_p=I((a-h,x),(a+h,x))$ (for small enough $h>0$) satisfies the locally regularizable condition (iv) in Section \ref{sec:preli-lorentz} by Theorem \ref{thm:causal-character}. This completes the proof.
\end{proof}

\section{Synthetic curvature bounds}\label{sec:ricci}
We apply the synthetic time-like Ricci curvature bounds defined in Section \ref{sec:curv} to the Lorentzian length structure $hI\times_\mathcal FX$, where $\mathcal F$ satisfies \eqref{eq:cont}, and $h:I\to (0,\infty)$ is a continuous function of one variable.

Let $\mathcal M=\{m_s\}_{s\in I}$ be a family of measures as in Section \ref{sec:intro-TCD} satisfying \eqref{eq:measure-comp}, and consider the measure $\bm m=m_s\otimes h\ud s$ defined on the generalized Lorentzian product $hI\times_\mathcal FX$ by

\begin{align*}
	\int F(s,x)\ud \bm m(s,x)=\int_Ih(s)\int_X F(s,x)\ud m_s(x)\ud s
\end{align*}
for Borel functions $F:I\times X\to [0,+\infty]$. Let $G:I\times X\to (0,\infty)$, with $G$ and  $G\inv$ locally bounded, be such that $\displaystyle \int G\ud\bm m=1$, and define $m=m_G\in \mathcal P(X)$ by
\begin{align*}
	m(E)=\int_{I\times E}G\ud \bm m,\quad E\subset X.
\end{align*}
Let $g$ be the Radon--Nikodym derivative of $\bm m$ with respect to $h\mathcal L^1\otimes m$, i.e. $\bm m =g\cdot h\mathcal L^1\otimes m$.
Using \eqref{eq:measure-comp} we see that, for each compact $J\subset I$ and $K\subset X$, there exists a constant $C=C(J,K)>0$ with
\begin{align}\label{eq:density-bound}
	C\inv\le g\le C\quad \bm m-a.e.\textrm{ on } J\times K.
\end{align}
In what follows we choose a Borel representative of $g$ such that this holds pointwise everywhere for all $J,K$.
\begin{remark}
For each compact $J\subset I$ and $K\subset X$ there exists $C=C(J,K)>0$ such that $C\inv m_s|_K\le m|_K\le C m_s|_K$ for every $s\in J$.
\end{remark}

In the next theorem we denote by $|\cdot|_h$ the metric on $I$ given by  
\[
|t-s|_h=\int_s^th(\tau)\ud \tau,\quad s\le t.
\]

\begin{thm}\label{thm:k,n-convex}
	Suppose $\mathcal F$ and $\mathcal M$ satisfy \eqref{eq:cont} and \eqref{eq:measure-comp}, respectively, $h:I\to (0,\infty)$ is continuous, and assume that $(hI\times_{\mathcal{F}}X,m)$ satisfies the wTCD$_p$($K,N$)-condition. For any compact interval $J\subset I$ we have that, for $m$-a.e. $x\in X$, $g(\cdot,x)$ has a (continuous) $\mathcal H^1_{|\cdot|_h}$-representative  $g_{x}$ on $J$ that satisfies
	\begin{align}\label{eq:log-KN-convex}
		g_{x}(a_t)^{1/N}\ge \sigma_{K/N}^{1-t}(|b-a|_h)g_{x}(a)^{1/N}+\sigma_{K/N}^t(|b-a|_h)g_{x}(b)^{1/N},\quad a,b\in J,\ t\in [0,1],
	\end{align}
	where $a_t:[0,1]\to J$ is the $|\cdot|_h$-geodesic $a\curvearrowright b$.
\end{thm}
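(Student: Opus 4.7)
The plan is to apply the wTCD$_p(K,N)$ condition to test measures concentrated in thin $|\cdot|_h$-slabs around $\{a\}\times X$ and $\{b\}\times X$ whose unique $p$-optimal time-like plan is vertical monotone transport, then extract the pointwise $(K,N)$-convexity of $g(\cdot,x)$ by Lebesgue differentiation in $X$. The geometric fact driving everything is that for each $x\in X$, the vertical curve $t\mapsto(a_t,x)$ is time-like, $\tau$-maximizing, and has $\tau$-length $|b-a|_h$: any causal $\gamma=(\alpha,\beta)$ satisfies $L_\tau(\gamma)\le\int h|\alpha'|\,dt=|b-a|_h$, with equality forcing $v_\beta\equiv 0$ (hence $\beta$ constant).

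Fix $\nu=\rho_\nu m\in\mathcal P^{ac}_c(X,m)$ with $\rho_\nu$ bounded, and $a<b$ in $J$. For small $\eps>0$ let $I_a^\eps,I_b^\eps\subset J$ be the $|\cdot|_h$-length-$\eps$ intervals ending at $a$ and starting at $b$, and define
\[
\mu_i^\eps\defeq\tfrac{h(s)}{\eps}\chi_{I_i^\eps}(s)\,ds\otimes\nu,\quad i=0,1,
\]
with densities $\rho_\nu(x)\chi_{I_i^\eps}(s)/(\eps g(s,x))$ w.r.t.\ $\bm m$, bounded on compacts by \eqref{eq:density-bound}. Let $\bm\eta^\eps$ be the plan supported on vertical $|\cdot|_h$-affinely parametrized time-like curves $t\mapsto(a_t^{s_0,s_1(s_0)},x)$ with $(s_0,x)$ distributed as $\tfrac{h}{\eps}\chi_{I_a^\eps}\otimes\nu$ and $|s_1(s_0)-s_0|_h=T_\eps\defeq|b-a|_h+\eps$. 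Each such curve lies in $\mathrm{OpT}$ with length $T_\eps$, so $\bm\eta^\eps\in\mathrm{OpT}_p(\mu_0^\eps,\mu_1^\eps)$. Moreover, the bound $\tau\le(s_1-s_0)_h$ (strict unless $x=y$ on a vertical geodesic) combined with Jensen's inequality (strict concavity of $x\mapsto x^p$ applied to the distribution of $(s_1-s_0)_h$, whose first moment $T_\eps$ is pinned by the marginals) force any $p$-optimal plan to concentrate on the spatial diagonal with $(s_1-s_0)_h$ constant, giving $\mathrm{OpT}_p(\mu_0^\eps,\mu_1^\eps)=\{\bm\eta^\eps\}$; hence wTCD selects this plan.

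Writing $\mu_t^\eps=\rho_t^\eps\bm m$ with $\rho_t^\eps(s,x)=\rho_\nu(x)\chi_{J_t^\eps}(s)/(\eps g(s,x))$, where $J_t^\eps$ has $|\cdot|_h$-length $\eps$ and converges to $\{a_t\}$, a direct calculation gives
\[
\mathrm{Ent}_{\bm m}(\mu_t^\eps)=-\log\eps+\mathrm{Ent}_m(\nu)-\tfrac{1}{\eps}\int_{J_t^\eps}h(s)F_\nu(s)\,ds,\quad F_\nu(s)\defeq\int_X\log g(s,x)\,d\nu(x).
\]
Local boundedness of $F_\nu$ from \eqref{eq:density-bound} and Lebesgue differentiation in $(J,|\cdot|_h)$ send the last integral to $F_\nu(a_t)$ for a.e.\ $t$. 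Multiplying the wTCD inequality for $u^\eps(t)=\exp(-\mathrm{Ent}_{\bm m}(\mu_t^\eps)/N)$ by $\eps^{-1/N}$ cancels the divergent $-\log\eps/N$; combined with $T_{\bm\eta^\eps}=T_\eps\to|b-a|_h$, the limit $\eps\to 0$ yields
\[
e^{F_\nu(a_t)/N}\ge\sigma_{K/N}^{1-t}(|b-a|_h)\,e^{F_\nu(a)/N}+\sigma_{K/N}^t(|b-a|_h)\,e^{F_\nu(b)/N}.
\]

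To extract the pointwise statement, apply the above with $\nu_{x_0,r}\defeq m|_{B(x_0,r)}/m(B(x_0,r))$ and let $r\to 0$: infinitesimal doubling of $m$ (inherited from that of $m_s$ via \eqref{eq:measure-comp}) gives $F_{\nu_{x_0,r}}(s)\to\log g(s,x_0)$ for $m$-a.e.\ $x_0$ at every $|\cdot|_h$-Lebesgue point of $\log g(\cdot,x_0)$. Running over countably many rational $(a,b,t)$ yields a full-measure $X_0\subset X$ on which \eqref{eq:log-KN-convex} holds at all rational triples for a dense $|\cdot|_h$-full-measure set of base points, and the standard fact that $(K,N)$-concavity of $g_x^{1/N}$ on a dense subset of an interval extends uniquely to a continuous $(K,N)$-concave function produces the required representative $g_{x_0}$ on $J$ satisfying \eqref{eq:log-KN-convex} throughout. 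The main obstacle is the uniqueness step $\mathrm{OpT}_p(\mu_0^\eps,\mu_1^\eps)=\{\bm\eta^\eps\}$, which is what allows wTCD to constrain the concrete vertical plan rather than some uncontrolled $p$-optimal competitor; a secondary delicate point is the synchronized $\eps\to 0$ bookkeeping in which the divergent entropy contribution must cancel precisely against the $\eps^{-1/N}$-rescaling so that the convexity inequality persists to the finite limit.
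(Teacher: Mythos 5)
Your proposal follows essentially the same strategy as the paper's proof: test the wTCD$_p$($K,N$)-condition on product measures of the form (thin time slab)\,$\times$\,(spatial ball), identify the unique $p$-optimal plan as vertical translation, disintegrate the entropy over $X$, and localize by Lebesgue differentiation in both the space and time variables. The genuine differences are in the devices. (i) You obtain uniqueness of the optimal plan from $\tau\le|s_1-s_0|_h$ plus Jensen for the strictly concave cost $r^p$, exploiting that your two time marginals are exact $|\cdot|_h$-translates so the first moment of the time displacement is pinned by the marginals; the paper's Lemma \ref{lem:unique-optimal plan} instead treats arbitrary one-dimensional marginals via cyclical monotonicity for concave costs. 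Your argument is valid for your special slabs, but note that the parenthetical ``strict unless $x=y$'', i.e. $\tau((s_0,x),(s_1,y))<|s_1-s_0|_h$ for $x\ne y$, is asserted rather than proved: it needs a short quantitative argument (the spatial curve must exit a small compact $d_{s_0}$-ball, and then $\sqrt{A^2-B^2}\le A-B^2/(2A)$ together with Cauchy--Schwarz and \eqref{eq:cont} gives a uniform gap), since a priori equality of the suprema could occur without a maximizer. (ii) You shrink the time slab first and the spatial ball second, while the paper does the reverse (Lemmas \ref{lem:e-vs-e_x} and \ref{lem:e_x-KN-convex}); either order works, but yours forces you to track exceptional null sets depending on $(x_0,r)$, handled by Fubini and a countable sequence $r\to0$. (iii) The endgame is where your write-up is loosest: the $\eps\to 0$ limit only yields \eqref{eq:log-KN-convex} at (one-sided) $|\cdot|_h$-Lebesgue points of $F_\nu$, i.e. for a.e.\ triple $(a,b,t)$ — \emph{not} at ``all rational triples'' as written, since rational endpoints need not be differentiation points of $\log g(\cdot,x)$. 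Passing from this a.e.\ inequality to a continuous representative satisfying it for all $a,b\in J$ is precisely the step the paper treats explicitly, by working with the mollified functions $u_x^\delta(a)=\exp\big(\tfrac1N\dashint_{B_{|\cdot|_h}(a,\delta)}\log g(\cdot,x)\,\ud\mathcal H^1_{|\cdot|_h}\big)$, which satisfy the inequality at \emph{all} $a,b$ (first for rational data, then by continuity) and converge locally uniformly thanks to \eqref{eq:density-bound}; your appeal to a ``standard extension fact'' is essentially this mollification argument and should be spelled out. Neither of these two soft points is fatal, but both currently rest on assertion, and filling them in would largely reproduce the paper's Lemma \ref{lem:unique-optimal plan} and its $u_x^\delta$ construction.
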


Observe that $\mathcal H^1_{|\cdot|_h}=h\mathcal L^1$. Moreover, denoting $L=\|h\|_{L^1(J)}=\mathcal H^1_{|\cdot|_h}(J)$, the function 
\begin{align*}
	H:J\to [0,L],\quad H(t)\defeq\int_{J\cap (-\infty,t]}h\ud \tau,
\end{align*}
defines an isometric bijection $(J,|\cdot|_h)\to ([0,L],|\cdot|)$ with $H_\ast(g\mathcal H_{|\cdot|_h}^1)=g\circ H\inv\mathcal L^1$. Since in the 1-dimensional situation $([0,L],|\cdot|, g\mathcal L^1)$, the RCD($K,N$)-condition is characterized by \eqref{eq:log-KN-convex} (see \cite[Theorem 1.1]{kit-lak16}), Theorem \ref{thm:k,n-convex} has the following immediate corollary.

\begin{cor}\label{cor:RCD-lines}
	Under the hypotheses of Theorem \ref{thm:k,n-convex}, for $m$-a.e. $x\in X$, we have that\newline $J_x=(J,|\cdot|_h,g(\cdot,x)\mathcal H^1_{|\cdot|_h})$ is an RCD($K,N$)-space for every compact interval $J\subset I$.
\end{cor}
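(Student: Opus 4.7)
The plan is to transport the pointwise convexity estimate \eqref{eq:log-KN-convex} through an isometry to a Euclidean interval and then invoke the one-dimensional characterization of RCD$(K,N)$-spaces.

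First I would justify the paragraph between the theorem and the corollary, namely that $\mathcal H^1_{|\cdot|_h}=h\mathcal L^1$ and that $H\co (J,|\cdot|_h)\to ([0,L],|\cdot|)$ is an isometric bijection for $L=\|h\|_{L^1(J)}$. Both are direct from the definition $|t-s|_h=\int_s^t h\,\ud\tau$ together with the fact that $H$ is the arc-length parametrization of $J$ with respect to $|\cdot|_h$ (strict monotonicity comes from $h>0$ continuous). Consequently the pushforward identity
\[
H_\ast\big(g(\cdot,x)\mathcal H^1_{|\cdot|_h}\big)=g(H\inv(\cdot),x)\,\mathcal L^1
\]
holds by the change of variables formula, and RCD$(K,N)$ is invariant under such measure-preserving isometries.

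Next I would exhaust $I$ by a countable family $(J_n)$ of compact subintervals with $\bigcup_n J_n=I$. Theorem \ref{thm:k,n-convex} applied to each $J_n$ yields a full-$m$-measure set $X_n\subset X$ of points $x$ for which $g(\cdot,x)$ admits a continuous $\mathcal H^1_{|\cdot|_h}$-representative $g_x$ on $J_n$ satisfying \eqref{eq:log-KN-convex} with respect to $|\cdot|_h$. Set $X_0=\bigcap_n X_n$, so that $m(X\setminus X_0)=0$ and the continuous representatives on $J_n\cap J_{n+1}$ agree (continuous functions coinciding a.e. are equal), producing a single continuous representative on $I$ for each $x\in X_0$. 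For any compact $J\subset I$, $J\subset J_n$ for some $n$, and \eqref{eq:log-KN-convex} restricts to $J$.

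Fixing such $x\in X_0$ and a compact $J$, applying $H$ turns \eqref{eq:log-KN-convex} into exactly the logarithmic $(K,N)$-concavity condition for $\tilde g_x\defeq g_x\circ H\inv$ on $[0,L]$ with respect to the Euclidean distance, since $H$ sends the $|\cdot|_h$-geodesic $a_t$ to the affine interpolation $H(a)+t(H(b)-H(a))$ and $|b-a|_h=|H(b)-H(a)|$. By \cite[Theorem 1.1]{kit-lak16} this is precisely the defining property of RCD$(K,N)$ for the weighted interval $([0,L],|\cdot|,\tilde g_x\mathcal L^1)$. Transporting back via $H\inv$, $J_x=(J,|\cdot|_h,g_x\mathcal H^1_{|\cdot|_h})$ is an RCD$(K,N)$-space, as required.

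The only delicate point I anticipate is a careful handling of representatives: $g$ is defined only $\bm m$-a.e. and the choice of representative matters both for the Kitabeppu--Lakzian characterization (which is phrased in terms of a particular density) and for the intersection argument across the exhaustion $(J_n)$. Theorem \ref{thm:k,n-convex} already produces \emph{continuous} slice representatives, which removes the ambiguity on each $J_n$; the compatibility between different $n$ then reduces to uniqueness of continuous a.e.-representatives, so this step should be routine once Theorem \ref{thm:k,n-convex} is in hand.
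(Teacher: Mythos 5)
Your proposal is correct and follows essentially the same route as the paper: transport \eqref{eq:log-KN-convex} through the isometry $H$ to a weighted Euclidean interval and invoke the Kitabeppu--Lakzian one-dimensional characterization of RCD$(K,N)$. The only addition is your explicit countable-exhaustion argument handling the quantifier swap (``for a.e.\ $x$, for every compact $J$''), which the paper treats as immediate; that detail is correct and harmless.
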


We set up notation for the proof of Proposition \ref{thm:k,n-convex}. Let $\nu_0,\nu_1\in \mathcal P(J)$ and let $t\mapsto \nu_t$ be the (unique) geodesic with respect to the Wasserstein  distance
\begin{align}\label{eq:wass-dist}
W_{h}(\nu,\sigma)=\inf\left\{ \left(\frac 12\int_{J\times J} |x-y|_h^2\ud\bm\pi(x,y)\right)^{1/2}:\ \bm\pi \in \Pi(\nu,\sigma) \right\}.
\end{align}
Recall that $t\mapsto \nu_t$ is represented by an optimal dynamical plan $\bm\eta\in \mathcal P(Geo(J,|\cdot|_h))$, i.e $\nu_t=e_{t\ast}\bm\eta$, and that the optimal pairing in \eqref{eq:wass-dist} is given by $\bm\pi=(e_0,e_1)_\ast\bm\eta$. By \cite[Theorem 2.9]{sant15} $\bm\pi$ (and thus $\bm\eta$) is optimal for any cost $c(x,y)=f(|y-x|_h)$ with $f:\R\to\R$ strictly convex. Therefore $C\defeq \spt\bm\pi$ is $c$-cyclically monotone for any such $c$. Choosing $f(t)=L^p-|t|^p$ for $p\in (0,1)$ we obtain the following \emph{reverse} inequality in cyclic monotonicity.
\begin{align}\label{eq:cyclic-monotone}
	\sum_i^k|t_i-s_i|_h^p\ge \sum_i^k|t_i-s_{\sigma(i)}|_h^p,\quad p\in (0,1)
\end{align}
for all $(t_1,s_1),\ldots,(t_k,s_k)\in C$ and every permutation $\sigma$ of $\{1,\ldots, k\}$.
\begin{remark}\label{rmk:eta-max-concave-cost}
By the same argument we have that
\begin{align*}
\int|\alpha_1-\alpha_0|^p\ud\bm\eta\ge \int|\alpha_1-\alpha_0|^p\ud\tilde{\bm\eta}
\end{align*}
for any $p\in (0,1)$ and any $\tilde{\bm\eta}\in \mathcal P(C([0,1],J))$ with $e_{i\ast}\tilde{\bm\eta}= \nu_i, \ i=0,1$, with equality if and only if $\tilde{\bm\eta}=\bm\eta$.
\end{remark}

\begin{lemma}\label{lem:unique-optimal plan}
Let $\nu_0,\nu_1\in \mathcal P^{ac}(J,\mathcal H^1_{|\cdot|_h})$ be such that $\bm\eta\in {\rm OptGeo}(\nu_0,\nu_1)$ is concentrated on increasing curves, and let $B\subset X$ be a bounded Borel set. Define 
\begin{align*}
\mu_t^B\defeq \nu_t\times \frac{m|_B}{m(B)}.
\end{align*}
Then $(\mu_0^B,\mu_1^B)\in \mathcal P^{ac}(hI\times_{\mathcal{F}}X,\bm m)$ is time-like $p$-dualizable and $t\mapsto \mu_t^B$ is the unique $\ell_p$-geodesic between $\mu_0^B$ and $\mu_1^B$. Moreover
\begin{align}\label{eq:p-optimal-length}
	\ell_p(\mu_0^B,\mu_1^B)=\left(\int|\alpha_1-\alpha_0|_h^p\ud\bm\eta(\alpha)\right)^{1/p}.
\end{align}
\end{lemma}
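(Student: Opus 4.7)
The plan is to lift $\bm\eta$ to a canonical time-like plan on $hI\times_{\mathcal F}X$ keeping the $X$-coordinate constant, exploit that $h$ depends only on $s$ to bound $\tau((a,x),(b,y))\le|b-a|_h$, and deduce optimality and uniqueness from a chain of two tight inequalities.

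First, construct $\tilde{\bm\eta}\in\mathcal P(C([0,1],I\times X))$ as the pushforward of $\bm\eta\otimes m|_B/m(B)$ under $(\alpha,y)\mapsto(t\mapsto(\alpha_t,y))$. Since $\bm\eta$ is concentrated on increasing $|\cdot|_h$-geodesics, each lifted curve $\gamma=(\alpha,y)$ has $\alpha'>0$ and $v_\beta\equiv 0$, hence is time-like with
\[
\tau(\gamma_t,\gamma_s)=|\alpha_s-\alpha_t|_h=(s-t)|\alpha_1-\alpha_0|_h=(s-t)L(\gamma)
\]
for $0\le t\le s\le 1$, placing $\gamma\in\mathrm{OpT}(hI\times_{\mathcal F}X)$; Fubini yields $e_{t*}\tilde{\bm\eta}=\mu_t^B$.

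Next, for any causal curve $\gamma=(\alpha,\beta)$ joining $(a,x)$ to $(b,y)$, since $h$ is independent of $x$,
\[
\tau(\gamma)=\int_0^1\sqrt{h(\alpha_t)^2\alpha'(t)^2-v_\beta(\alpha_t,t)^2}\,\ud t\le\int_0^1h(\alpha_t)\alpha'(t)\,\ud t=|b-a|_h,
\]
with equality forcing $v_\beta\equiv 0$ and hence $x=y$. For any causal coupling $\pi\in\Pi_\le(\mu_0^B,\mu_1^B)$ this gives
\[
\int\tau^p\,\ud\pi\le\int|b-a|_h^p\,\ud\pi\le\int|\alpha_1-\alpha_0|_h^p\,\ud\bm\eta,
\]
the second inequality being Remark~\ref{rmk:eta-max-concave-cost} applied to the concave cost $|\cdot|_h^p$ on the $I$-projection of $\pi$ (which has marginals $\nu_0,\nu_1$). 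The coupling $(e_0,e_1)_*\tilde{\bm\eta}$ attains equality throughout, proving both \eqref{eq:p-optimal-length} and time-like $p$-dualizability of $(\mu_0^B,\mu_1^B)$, with $\tilde{\bm\eta}\in\mathrm{OpT}_p(\mu_0^B,\mu_1^B)$. Applying the same computation to restrictions $\tilde{\bm\eta}|_{[s,t]}$ yields $\ell_p(\mu_s^B,\mu_t^B)=(t-s)\ell_p(\mu_0^B,\mu_1^B)$, so $t\mapsto\mu_t^B$ is an $\ell_p$-geodesic.

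For uniqueness, any $\ell_p$-geodesic is represented by some $\tilde{\bm\eta}'\in\mathrm{OpT}_p(\mu_0^B,\mu_1^B)$; setting $\pi'=(e_0,e_1)_*\tilde{\bm\eta}'$, the saturated chain forces (i) $x=y$ for $\pi'$-a.e.\ pair, so $\tilde{\bm\eta}'$-a.e.\ curve has constant $X$-coordinate, and (ii) the $I$-projection of $\pi'$ equals $(e_0,e_1)_*\bm\eta$ by the uniqueness clause of Remark~\ref{rmk:eta-max-concave-cost}. Since each $\tilde{\bm\eta}'$-a.e.\ curve lies in $\mathrm{OpT}$ with constant $X$-coordinate, its $\alpha$-component is the unique constant-speed $|\cdot|_h$-geodesic between its endpoints, forcing $\tilde{\bm\eta}'=\tilde{\bm\eta}$. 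The main obstacle is the rigidity step $\tau((a,x),(b,y))=|b-a|_h\Rightarrow x=y$: it relies crucially on $h$ being independent of $x$, so that equality in $\int\sqrt{h^2\alpha'^2-v_\beta^2}\le\int h\alpha'$ forces $v_\beta\equiv 0$; without it, optimal curves could spread in the $X$-direction and the uniqueness argument would collapse.
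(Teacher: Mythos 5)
Your proposal is correct and follows essentially the same route as the paper: lift $\bm\eta\otimes m|_B/m(B)$ to vertical curves with constant $X$-component, use that $h$ depends only on $s$ to get $\tau\le|b-a|_h$ with rigidity ($v_\beta\equiv 0$, hence constant $X$-component, at equality), and invoke the one-dimensional concave-cost optimality/uniqueness fact (Remark \ref{rmk:eta-max-concave-cost}) to obtain \eqref{eq:p-optimal-length} and uniqueness. The only cosmetic difference is that you certify optimality by directly comparing against an arbitrary causal coupling, whereas the paper verifies $\ell^p$-cyclic monotonicity of the support of the lifted coupling via \eqref{eq:cyclic-monotone}; both rest on the same ingredient and yield the same conclusion.
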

In particular $(\mu_0^B,\mu_1^B)$ is strongly time-like $p$-dualizable.
\begin{proof}
	The curve $t\mapsto \mu_t^B$ is represented by $\bm\eta^B\in \mathcal P({\rm OpT}(hI\times_{\mathcal{F}}X))$, defined by
	\[
	\int F(\gamma)\ud\bm\eta^B(\gamma)\defeq \dashint_B F((\alpha,x))\ud\bm\eta(\alpha)\ud m(x),\quad F:C([0,1];I\times X)\to [0,\infty].
	\]
	Consequently $\bm\pi^B\defeq (e_0,e_1)_\ast\bm\eta^B\in \Pi_\ll(\mu_0,\mu_1)$. We show that $C:=\spt \bm\pi^B$ is $\ell^p$-cyclically monotone. Indeed, if $(p_j,q_j)\in C$ for $j=1,\ldots, N$ then $p_j=(t_j,x_j)$ and $q_j=(s_j,x_j)$ (so that $p_j$ and $q_j$ have the same $x$-component) and we have the estimate
	\begin{align*}
		\sum_j^N\ell^p(p_j,q_{\sigma(j)})\le \sum_j^N|t_j-s_{\sigma(j)}|_h^p\le \sum_j^N|t_j-s_j|_h^p = \sum_j^N\ell^p(p_j,q_{j})
	\end{align*}
for any permutation $\sigma$ by \eqref{eq:cyclic-monotone}. This shows that $\bm\pi^B$ is optimal, and a direct calculation yields \eqref{eq:p-optimal-length}.

To show uniqueness suppose $\bm\pi'\in \Pi_\ll(\mu_0^B,\mu_1^B)$ is an optimal coupling between $\mu_0^B$ and $\mu_1^B$, and let $\bm\eta'\in \mathcal P({\rm OpT}( hI\times_{\mathcal{F}}X ))$ represent $\bm\pi'$. By Remark \ref{rmk:eta-max-concave-cost} we have that

\begin{align*}
\ell_p(\mu_0^B,\mu_1^B)^p=\int L_\tau(\gamma)^p\ud\bm\eta'\le \int |p_1(\gamma_1)-p_1(\gamma_0)|_h^p\ud\bm\eta'=\int |\alpha_1-\alpha_0|_h^p\ud p_{1\ast}\bm\eta'\le \int|\alpha_1-\alpha_0|_h^p\ud\bm\eta.
\end{align*}
Since by \eqref{eq:p-optimal-length} the last inequality is an equality, Remark \ref{rmk:eta-max-concave-cost} implies that $p_{1\ast}\bm\eta'=\bm\eta$. In particular $\spt\bm\pi'=\spt\bm\pi^B$. Since for every $(p,q)\in\spt\bm\pi^B$ there is a \emph{unique} time-like geodesic $p\curvearrowright q$ (the vertical line with constant $X$-component), it follows that $\bm\eta'=\bm\eta$.
\end{proof}

\begin{remark}
	In particular ${\rm OpT}_p(\mu_0^B,\mu_1^B)=\{\bm\eta^B\}$.
\end{remark}

In the notation of Lemma \ref{lem:unique-optimal plan}, define 
\begin{align*}
	e_B(t)\defeq {\rm Ent}_{\bm m}(\mu_t^B),\quad e_x(t)\defeq{\rm Ent}_{\lambda_x}(\nu_t),\quad t\in [0,1],
\end{align*}
where $\lambda_x=g(\cdot,x)\mathcal H^1_{|\cdot|_h}$.
\begin{lemma}\label{lem:e-vs-e_x}
	For $m$-a.e. $x\in X$ we have that
	\begin{align}\label{eq:entropy}
	e_B(t)=\dashint_B e_x(t)\ud m-\log m(B).
	\end{align}
\end{lemma}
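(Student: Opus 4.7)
The plan is a direct density calculation: identify $d\mu_t^B/d\bm m$ and $d\nu_t/d\lambda_x$ explicitly, then separate variables via Fubini. If $\nu_t$ is singular with respect to $\mathcal H^1_{|\cdot|_h}$, then $\mu_t^B$ is singular with respect to $\bm m$ and both sides of \eqref{eq:entropy} equal $+\infty$, so I may assume $\nu_t=\rho_t\mathcal H^1_{|\cdot|_h}$ for a Borel density $\rho_t$ with $\int \rho_t\, h\,ds=1$.

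The first step is to write $\bm m=g\cdot\mathcal H^1_{|\cdot|_h}\otimes m$ (recall $\mathcal H^1_{|\cdot|_h}=h\mathcal L^1$) and $\mu_t^B=\rho_t\mathcal H^1_{|\cdot|_h}\otimes\tfrac{\mathbf 1_B}{m(B)}m$, from which
\[
\frac{d\mu_t^B}{d\bm m}(s,x)=\frac{\rho_t(s)\mathbf 1_B(x)}{g(s,x)\,m(B)}\quad \bm m\text{-a.e.}
\]
Plugging into the definition of the relative entropy and applying Fubini gives
\[
e_B(t)=\frac{1}{m(B)}\int_B\!\int_J\rho_t(s)\bigl[\log\rho_t(s)-\log g(s,x)\bigr]h(s)\,ds\,dm(x)-\log m(B),
\]
where the $-\log m(B)$ term factors out using $\int\rho_t h\,ds=1$.

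The second step is to identify the inner integral with $e_x(t)$. By \eqref{eq:density-bound} and Fubini, for $m$-a.e.\ $x\in X$ the section $s\mapsto g(s,x)$ is bounded above and bounded away from zero on every compact subinterval of $I$; hence $\lambda_x=g(\cdot,x)\mathcal H^1_{|\cdot|_h}$ is a Radon measure equivalent to $\mathcal H^1_{|\cdot|_h}$ on compacta, $d\nu_t/d\lambda_x(s)=\rho_t(s)/g(s,x)$, and a one-line computation gives
\[
e_x(t)=\int_J\rho_t(s)\bigl[\log\rho_t(s)-\log g(s,x)\bigr]h(s)\,ds.
\]
Substituting into the displayed expression for $e_B(t)$ yields exactly \eqref{eq:entropy}.

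The only subtle point is the Fubini step. Since $B$ is bounded it lies in a compact $K\subset X$, so by \eqref{eq:density-bound} the factor $\log g$ is essentially bounded on $J\times B$, and absolute integrability of the integrand reduces to integrability of $\rho_t\log\rho_t$ against $h\,ds$, that is, to finiteness of the ordinary entropy of $\nu_t$ with respect to $\mathcal H^1_{|\cdot|_h}$. If this fails on a positive-$m$-measure subset of $B$, then both sides of \eqref{eq:entropy} equal $+\infty$ and the identity is trivial; otherwise the computation above is unconditionally justified. I expect no deeper obstacle.
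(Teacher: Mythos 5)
Your proposal is correct and follows essentially the same route as the paper: identify the density of $\mu_t^B$ with respect to $\bm m$ as $\tilde\rho_t(s)\mathbf 1_B(x)/(g(s,x)m(B))$, expand the entropy, and separate variables via Fubini to recognize the inner integral as $e_x(t)$. Your extra care with the singular and infinite-entropy cases and the justification of Fubini via \eqref{eq:density-bound} is sound detail that the paper leaves implicit.
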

\begin{proof}
Let $\{\tilde\rho_t\}_{t\in [0,1]}$ be Borel functions on $J$ such that $\nu_t=\tilde \rho_t\mathcal H^1_{|\cdot|_h}$ and set
\begin{align*}
	\rho_t(s,x)\defeq \frac{\tilde\rho_t(s)}{g(s,x)},
\end{align*}
so that $\nu_t=\rho_t(\cdot,x)\lambda_x$ for $m$-a.e. $x\in X$. Since $\bm m=g\mathcal H^1_{|\cdot |_h}\otimes m$, we have that \newline $\displaystyle 	\mu_t^B=\frac{\chi_{J\times B}}{m(B)}\rho_t \bm m$. Thus
\begin{align*}
	e_B(t)=&\int_{J\times B}\frac{\rho_t}{m(B)}\log \frac{\rho_t}{m_J(B)} \ud\bm m\nonumber\\
	=& \dashint_B\int_J\rho_t\log \rho_t \ud\lambda_x\ud m(x)-\dashint_B\int_J\rho_t\log m(B)\ud\lambda_x\ud m(x)\nonumber\\
	=& \dashint_Be_x(t)\ud m(x)-\log m(B),
\end{align*}
completing the proof.
\end{proof}

\bigskip\noindent Assume now that $hI\times_{\mathcal F}X$ satisfies the wTCD$_p$($K,N$)-condition for some $K\in \R$, $N\ge 1$ and $p\in (0,1)$. It follows from the discussion above that $e_B$ satisfies
\begin{align}\label{eq:e_B-concavity}
	e^{-e_B(t)/N}\ge \sigma_{K/N}^{1-t}(D)e^{-e_B(0)/N}+\sigma_{K/N}^t(D)e^{-e_B(1)/N},\quad t\in [0,1],
\end{align}
where
\begin{align*}
	D\defeq \|\tau\|_{L^2(\bm\pi_B)}=
	\left(\dashint_B \int|\alpha_1-\alpha_0|_h^2\ud\bm\eta(\alpha)\ud m(x)\right)^{1/2}=W_{h}(\nu_0,\nu_2).
\end{align*}

\begin{lemma}\label{lem:e_x-KN-convex}
	Let $\nu_0,\nu_1\in \mathcal P^{ac}(J,\mathcal H^1_{|\cdot|_f})$ be as in Lemma \ref{lem:unique-optimal plan}. For $m$-a.e. $x\in X$, the function $e_x:[0,1]\to \R$ satisfies
	\begin{align}\label{eq:KN-convexity of e_x}
		e^{-e_x(t)/N}\ge \sigma_{K/N}^{1-t}(D)e^{-e_x(0)/N}+\sigma_{K/N}^{t}(D)e^{-e_x(1)/N},\quad t\in [0,1].
	\end{align}
\end{lemma}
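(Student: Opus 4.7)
The idea is to deduce the pointwise $(K,N)$-convexity of $e_x$ from the averaged version satisfied by $e_B$, by letting $B$ shrink to a single point $x_0$ and invoking the Lebesgue differentiation theorem on $(X,m)$. Since the constant $D$ appearing in \eqref{eq:e_B-concavity} is independent of the set $B$ (it equals $W_h(\nu_0,\nu_1)$), all the $B$-dependence sits in the three entropies, and this is exactly what localization can remove.

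\textbf{Step 1 (averaged inequality).} Substituting $e_B(s)=\dashint_B e_x(s)\,\ud m-\log m(B)$ from Lemma \ref{lem:e-vs-e_x} into \eqref{eq:e_B-concavity} produces a common factor $m(B)^{1/N}=e^{\log m(B)/N}$ on both sides, which divides out to give
\[
\exp\!\Big(-\tfrac1N\dashint_B e_x(t)\,\ud m\Big)\ge \sigma_{K/N}^{1-t}(D)\exp\!\Big(-\tfrac1N\dashint_B e_x(0)\,\ud m\Big)+\sigma_{K/N}^{t}(D)\exp\!\Big(-\tfrac1N\dashint_B e_x(1)\,\ud m\Big)
\]
for every bounded Borel $B\subset X$ and every $t\in[0,1]$.

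\textbf{Step 2 (localization).} Writing
\[
e_x(s)=\int_J\tilde\rho_s\log\tilde\rho_s\,\ud\mathcal H^1_{|\cdot|_h}-\int_J\tilde\rho_s\log g(\cdot,x)\,\ud\mathcal H^1_{|\cdot|_h},
\]
the $x$-dependent term is uniformly bounded on compacts by \eqref{eq:density-bound}, so $x\mapsto e_x(s)$ is locally bounded (hence locally integrable) for every $s\in[0,1]$. The infinitesimal doubling of $m_s$, transferred to $m$ via \eqref{eq:measure-comp}, ensures that the Lebesgue differentiation theorem applies on $(X,d_s,m)$. Fixing a countable dense set $T\subset[0,1]$ with $\{0,1\}\subset T$ and intersecting the full-measure sets of Lebesgue points of $e_\cdot(s)$ over $s\in T$, I obtain a full measure set $N\subset X$ such that for each $x_0\in N$, sending $B=B_{d_s}(x_0,r)$ with $r\to 0$ in the displayed inequality gives
\[
e^{-e_{x_0}(t)/N}\ge \sigma_{K/N}^{1-t}(D)e^{-e_{x_0}(0)/N}+\sigma_{K/N}^{t}(D)e^{-e_{x_0}(1)/N},\qquad t\in T.
\]

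\textbf{Step 3 (extension to all $t$ and main obstacle).} To promote the inequality from the dense set $T$ to arbitrary $t\in[0,1]$, I would use that $t\mapsto\nu_t$ is continuous in $W_h$ (being a Wasserstein geodesic), that relative entropy is lower semicontinuous with respect to weak convergence against the fixed reference $\lambda_{x_0}$, and that the right-hand side is continuous in $t$. Hence $t\mapsto e^{-e_{x_0}(t)/N}$ is upper semicontinuous, so for any $t\in[0,1]$ and any $t_n\in T$ with $t_n\to t$,
\[
e^{-e_{x_0}(t)/N}\ge\limsup_n e^{-e_{x_0}(t_n)/N}\ge\limsup_n \mathrm{RHS}(t_n)=\mathrm{RHS}(t).
\]
The main obstacle will be verifying the lower semicontinuity of $t\mapsto e_{x_0}(t)={\rm Ent}_{\lambda_{x_0}}(\nu_t)$ in our setting; this reduces to the standard weak-lower-semicontinuity of relative entropy, which applies here because $\log g(\cdot,x_0)$ is bounded on the compact support of $\nu_t$, so that the entropy of $\nu_t$ against $\lambda_{x_0}$ differs from the entropy against $\mathcal H^1_{|\cdot|_h}$ by a term that is continuous in $t$ under weak convergence of $\nu_t$.
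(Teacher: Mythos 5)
Your argument is correct and follows essentially the same route as the paper: cancel the common factor $m(B)^{1/N}$ in \eqref{eq:e_B-concavity} using Lemma \ref{lem:e-vs-e_x}, apply Lebesgue differentiation on shrinking balls (justified because $m$ is a Vitali measure thanks to the infinitesimal doubling of the $m_s$ and \eqref{eq:measure-comp}) at a countable dense set of times, and then pass to all $t\in[0,1]$. The only cosmetic difference is the final step, where the paper invokes continuity of $t\mapsto e_x(t)$ while you use upper semicontinuity of $t\mapsto e^{-e_x(t)/N}$ coming from weak lower semicontinuity of the entropy along the $W_h$-continuous curve $t\mapsto\nu_t$; either observation closes the argument.
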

\begin{proof}
	Let $E\subset X$ be a $m$-null set such that whenever $x\in X\setminus E$ we have that 
	\begin{align*}
		e_x(t)=\lim_{r\to 0}\dashint_{B(x,r)}e_y(t)\ud m_J(y)
	\end{align*}
	for a countable dense set of $t$'s. (Recall that the measures $m_s$ are infinitesimally doubling, and therefore $m$ is a Vitali measure, cf. \cite[Theorem 3.4.3]{HKST07}). Estimate \eqref{eq:e_B-concavity} yields
	\begin{align*}
		\frac 1{m(B(x,r))}e^{-\frac 1N\dashint_{B(x,r)}e_y(t)\ud m}\ge \frac{\sigma_{K/N}^{1-t}(D)}{m(B(x,r))}e^{-\frac 1N\dashint_{B(x,r)}e_y(t)\ud m}+\frac{\sigma_{K/N}^{t}(D)}{m(B(x,r))}e^{-\frac 1N\dashint_{B(x,r)}e_y(t)\ud m}
	\end{align*}
for all $t\in [0,1]$ and $r>0$, cf. \eqref{eq:entropy}. Multiplying by $m(B(x,r))$ and letting $r\to 0$ we obtain the claimed inequality for a countable dense set of $t$'s. Since, for every $x\in X\setminus E$, $e_x$ (as well as $s\mapsto \sigma_{K/N}^s(D)$) is continuous, the inequality follows for all $t\in [0,1]$, completing the proof of the claim.
\end{proof}

\begin{proof}[Proof of Theorem \ref{thm:k,n-convex}]
Let $a,b\in {\rm int} J$ and $\tilde a_t\defeq H(a)+t(H(b)-H(a))$ for $t\in [0,1]$. For small $\delta>0$, the curve in $t\mapsto \tilde \nu_t^\delta$ in $\mathcal P([0,L])$ given by
\begin{align*}
	\tilde \nu_t^\delta\defeq \frac{\chi_{[\tilde a_t-\delta,\tilde a_t+\delta]}}{2\delta}\mathcal L^1|_{[0,L]}
\end{align*}
is a $W_2$-geodesic, and consequently $\nu_t^\delta\defeq h\inv_\ast\tilde\nu_t^\delta$ is a geodesic in $\mathcal P(J)$ with respect to $W_f$. We have $W_f(\nu_0^\delta,\nu_1^\delta)=W_2(\tilde\nu_0^\delta,\tilde\nu_1^\delta)=|H(b)-H(a)|=|b-a|_h$. Observe that
\begin{align*}
\nu_t^\delta=\frac{\chi_{B_{|\cdot|_h}(a_t,\delta)}}{2\delta}\mathcal H^1_{|\cdot|_h}=\frac{\chi_{B_{|\cdot|_h}(a_t,\delta)}}{2\delta g(\cdot,x)}\lambda_x,
\end{align*}
where $a_t\defeq H\inv(\tilde a_t)$ is the $|\cdot|_h$-geodesic $a\curvearrowright b$. Thus 
\begin{align*}
	e_x^\delta(t)\defeq {\rm Ent}_{\lambda_x}(\nu_t^\delta)=\int_{B_{|\cdot|_h}(a_t,\delta)}\frac{1}{2\delta g(\cdot,x)}\log\frac{1}{2\delta g(\cdot,x)}\ud\lambda_x=-\dashint_{B_{|\cdot|_h}(a_t,\delta)}\log g(\cdot,x)\ud\mathcal H^1_{|\cdot|_h}+\log\frac{1}{2\delta}.
\end{align*}
By Lemma \ref{lem:e_x-KN-convex} we may find a $m$-null set $E\subset X$ such that, if $x\in X\setminus E$, $e_x^\delta$ satisfies \eqref{eq:KN-convexity of e_x} for all $a,b\in {\rm int}J\cap \Q$ with $a<b$ and all $\delta\in \Q_+$.

For $\delta\in \Q_+$ and $x\in X\setminus E$, define
\begin{align}\label{eq:good-rep-of-g_J}
u_x^\delta(a)\defeq \exp\left(\frac 1N\dashint_{B_{|\cdot|_h}(a,\delta)}\log g(\cdot,x)\ud\mathcal H^1_{|\cdot|_h} \right),\quad a\in J_\delta\defeq (\inf J+\delta,\sup J-\delta).
\end{align}
For each $a,b\in J_\delta\cap \Q$ with $a<b$ we have that $u_x^\delta(a_t)\defeq 2\delta e^{-e_x^\delta(t)/N}$ satisfies \eqref{eq:KN-convexity of e_x} and by continuity the same holds true for all $a,b\in J_\delta$ with $a<b$. It follows that, for $\delta\in \Q_+$ and $x\in X\setminus E$, $u_x^\delta$ satisfies
\begin{align*}
	u_x^\delta(a_t)\ge \sigma_{K/N}^{1-t}(|b-a|_h)u_x^\delta(a)+\sigma_{K/N}^{t}(|b-a|_h)u_x^\delta(b)
\end{align*}
whenever $a,b\in J_\delta$, $t\in [0,1]$. 

Now on the one hand, for $m$-a.e. $x\in X\setminus E$, we have 
\begin{align*}
	\lim_{\delta\to 0^+}u_x^\delta(a)= g_J(a,x)^{1/N}\quad a.e.\ a\in J,
\end{align*}
while on the other hand, up to a subsequence, $u_x^\delta$ converges locally uniformly as $\delta\to 0^+$ to a continuous function $u_x:J\to \R$ satisfying
\begin{align*}
	u_x(a_t)\ge \sigma_{K/N}^{1-t}(|b-a|_h)u_x(a)+\sigma_{K/N}^{t}(|b-a|_h)u_x(b),\quad a,b\in J,\ t\in [0,1],
\end{align*}
whenever $x\in X\setminus E$. Thus for $m$-a.e. $x\in X$ we have that $u_x^N$ is a $\mathcal H^1_{|\cdot|_h}$-representative of $g(\cdot,x)$ and satisfies \eqref{eq:log-KN-convex}.
\end{proof}

\begin{proof}[Proof of Corollary \ref{cor:partial-rigidity}]
Suppose that $I=\R$, $h\equiv 1$, and $K=0$. Let $J_k=[-k,k]$. By Theorem \ref{thm:k,n-convex} there is a set $E\subset X$ with $m(E)=0$, and $(J_k,|\cdot|, g_{x}\mathcal H^1_{|\cdot|_h})$ is an RCD($0,N$)-space and $ g_{x}^{1/N}$ is concave in $J_k$ for all $k$, whenever $x\notin E$. Thus $g_x^{1/N}$ is a non-negative concave function on $\R$, and therefore it must be constant, which we denote by $c(x)$. It follows that 
\begin{align*}
	\int_Jm_s(E)\ud s=\bm m(J\times E)=\int_{J\times E}g(s,x)\ud s\ud m(x)=|J|\int_Ec\,\ud m
\end{align*}
for any compact $J\subset I$ and Borel $E\subset X$. The claim readily follows from this.
\end{proof}

\appendix

\section{Caratheodory theory of ODE's}\label{sec:ODE}
Let $D\subset \R^2$ be open. We say that $\Phi:D\to \R$ satisfies the \emph{Carath\'eodory condition} if $\Phi(\cdot,t)$ is continuous for fixed $t$ and $f(s,\cdot)$ is measurable for fixed $s$. We say that $\Phi$ satisfies a (local) \emph{Carath\'eodory-Lipschitz condition} if in addition, for every compact $I\times J\subset D$ with $I,J$ closed intervals, there exists $L\in L^1(J)$ such that
\begin{align}\label{eq:car-lip}
	| \Phi(s,t)- \Phi(s',t)|\le L(t)|s-s'|,\quad s,s'\in I.
\end{align}

We have the following existence and uniqueness theorem.
\begin{thm}[Theorem XVIII and XX, Ch. III §10 in \cite{wal98}]\label{thm:car-ODE}
If $\Phi$ is locally Carath\'eodory-Lipschitz and $(y_0,a)\in D$, then the ODE
\begin{equation}\label{eq:ODE_caratheodory}
y'=\Phi(y,t)
\end{equation}
with initial value $y(a)=y_0$ has a unique solution which can be extended to the left and right (of $a$) up to the boundary of $D$. In particular the unique solution varies continuously with $y_0$. 
\end{thm}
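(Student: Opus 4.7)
The plan is to reformulate the ODE as an integral equation and solve it by a Picard-type fixed point argument adapted to the Carath\'eodory setting. Given $(y_0,a) \in D$, choose a compact rectangle $I \times J \subset D$ with $J$ a closed interval containing $a$ in its interior and $y_0 \in \mathrm{int}(I)$. Since $\Phi(y_0,\cdot)$ is measurable and locally bounded on $J$ by the Carath\'eodory-Lipschitz condition (apply \eqref{eq:car-lip} with $s' = y_0$ and any fixed $s \in I$), the function $m(t) \defeq L(t)\cdot \mathrm{diam}(I) + |\Phi(y_0,t)|$ lies in $L^1(J)$. I would then work in the complete metric space $E_\delta = \{y \in C([a-\delta,a+\delta],I) : y(a) = y_0\}$ equipped with the weighted sup-norm
\[
\|y - z\|_* = \sup_{t \in [a-\delta,a+\delta]} e^{-2\Lambda(t)}|y(t)-z(t)|, \qquad \Lambda(t) = \Big|\int_a^t L(s)\,\mathrm{d}s\Big|,
\]
and consider the map $T(y)(t) = y_0 + \int_a^t \Phi(y(s),s)\,\mathrm{d}s$. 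The Carath\'eodory hypothesis ensures $s \mapsto \Phi(y(s),s)$ is measurable for each continuous $y$, while the Lipschitz bound \eqref{eq:car-lip} shows it is dominated by $m \in L^1(J)$, so $T(y)$ is well-defined and absolutely continuous. Choosing $\delta > 0$ so small that $\int_{a-\delta}^{a+\delta} m \le \mathrm{dist}(y_0, \partial I)$ makes $T$ self-map $E_\delta$, and the Lipschitz estimate gives $\|T(y)-T(z)\|_* \le \tfrac{1}{2}\|y-z\|_*$, yielding a unique fixed point by Banach's theorem, hence a local solution.

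Next I would obtain the \emph{maximal} solution by a standard Zorn's lemma / pasting argument using local uniqueness: any two solutions agree on the intersection of their domains, so the union of all solutions through $(y_0,a)$ is itself a solution defined on a maximal open interval $(t_-,t_+)$. To see this maximal solution reaches $\partial D$ in both directions, I would argue by contradiction: if, say, the graph $\{(y(t),t) : t \in [a,t_+)\}$ remained in a compact $K \Subset D$, then applying the local existence result above at the endpoint (using the $L^1$ bound on $m$ restricted to a neighbourhood of $t_+$, which gives equicontinuity of $y$ near $t_+$ and hence a limit $y(t_+)$ in the interior) would extend the solution past $t_+$, contradicting maximality.

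Finally, for continuous dependence on $y_0$, I would apply Gronwall's inequality in its integral form: if $y, \tilde y$ solve \eqref{eq:ODE_caratheodory} with initial values $y_0, \tilde y_0$ respectively, both with graphs in $I \times J$, then
\[
|y(t) - \tilde y(t)| \le |y_0 - \tilde y_0| + \Big|\int_a^t L(s)|y(s)-\tilde y(s)|\,\mathrm{d}s\Big|,
\]
so $|y(t) - \tilde y(t)| \le |y_0 - \tilde y_0|\, e^{\Lambda(t)}$. This uniform Lipschitz dependence, combined with the extension-to-the-boundary statement, yields continuity of the solution in $y_0$ on any compact subinterval of the maximal domain.

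\textbf{Main obstacle.} The delicate step is handling the fact that $\Phi$ is only measurable (not continuous) in $t$, which rules out the classical Picard-Lindel\"of setup: one must verify that $s \mapsto \Phi(y(s),s)$ is measurable and $L^1$-dominated for every continuous $y$, and that the fixed point argument survives with the Lipschitz ``constant'' $L \in L^1$ rather than $L^\infty$. The weighted norm $\|\cdot\|_*$ is what makes the contraction work despite $L$ being merely integrable. The extension argument also requires care, because without a uniform bound on $\Phi$ one cannot simply invoke equicontinuity via a Lipschitz estimate; instead one must use that $|y(s)-y(t)| \le \int_s^t m(\tau)\,\mathrm{d}\tau$ together with absolute continuity of the integral of $m$.
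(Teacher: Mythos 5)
The paper does not actually prove this theorem: it is quoted in Appendix \ref{sec:ODE} directly from Walter \cite{wal98}, so there is no internal proof to compare against. Your route is the standard textbook one (integral-equation reformulation, contraction in a weighted sup-norm to absorb the merely integrable Lipschitz function $L$, pasting of local solutions via uniqueness, escape-from-compact-sets for the maximal extension, Gronwall for dependence on $y_0$), and those parts are essentially fine; the continuous-dependence step would additionally need the small bootstrap guaranteeing that for $\tilde y_0$ close to $y_0$ the solution survives on the whole compact subinterval and stays in the tube where the common $L$ is valid, but that is routine.

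There is, however, one genuine gap, right at the start: you claim that $\Phi(y_0,\cdot)$ is ``locally bounded on $J$ by the Carath\'eodory-Lipschitz condition (apply \eqref{eq:car-lip} with $s'=y_0$)''. The inequality \eqref{eq:car-lip} only controls the oscillation $|\Phi(s,t)-\Phi(y_0,t)|$ in the first variable; it gives no information about the size of $t\mapsto \Phi(y_0,t)$, which under the stated hypotheses is merely measurable. Hence your majorant $m(t)=L(t)\,\mathrm{diam}(I)+|\Phi(y_0,t)|$ need not lie in $L^1(J)$, and with it both the self-map property of $T$ and the equicontinuity estimate $|y(s)-y(t)|\le\int_s^t m\,\ud\tau$ that your extension argument relies on collapse. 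Indeed, as literally phrased in the appendix the statement is false without an extra hypothesis: take $\Phi(s,t)=1/|t|$ for $t\ne 0$ and $\Phi(s,0)=0$ on $D=\R^2$; this is measurable in $t$, constant (hence Lipschitz with $L\equiv 0$) in $s$, yet $y'=\Phi(y,t)$ with $y(0)=y_0$ admits no absolutely continuous solution on any neighbourhood of $t=0$. In Walter's formulation the Carath\'eodory conditions include a local integrable majorant $|\Phi(s,t)|\le m_0(t)$ with $m_0\in L^1$ (equivalently, $\Phi(s_0,\cdot)\in L^1_{\rm loc}$ for one $s_0$, which combined with \eqref{eq:car-lip} produces your $m$); your proof must assume this rather than derive it from \eqref{eq:car-lip}. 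Once that hypothesis is added the rest of your argument goes through (for the extension step the majorant should be assembled on a compact neighbourhood of the graph, e.g.\ by covering it with finitely many rectangles); note that in the paper's actual application, Proposition \ref{prop:max-curve-modify}, the required integrability does hold, since $v_\beta(s,\cdot)=|\beta'_\cdot|_{d_s}\in L^1$ and $h$ is bounded away from zero on compact sets.
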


The following is a differential comparison theorem.
\begin{thm}[Theorem XXI, Ch. III §10 in \cite{wal98}]\label{thm:car-comparison}
Suppose $\Phi$ is locally Carath\'eodory-Lipschitz continuous, and that $\varphi,\psi\in AC(J)$, $J=[\xi,\xi+a]$ satisfy
\begin{itemize}
	\item[(a)] $\varphi(\xi)\le \psi(\xi)$,
	\item[(b)] $P\varphi\le P\psi$ a.e. in $J$, where $P\phi=\phi'-\Phi(\phi,x)$. 
\end{itemize}
Then either $\varphi<\psi$ in $J$ or there exists $c\in [\xi,\xi+a]$ s.t. $\varphi=\psi$ on $[\xi,c]$ and $\varphi<\psi$ in $(c,\xi+a]$. A corresponding statement holds for the interval $J'=[\xi-a,\xi]$ with the inequality reversed in (b).
\end{thm}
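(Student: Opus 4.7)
The plan is to reduce the statement to a monotonicity property of $w \defeq \psi-\varphi$ obtained via an integrating-factor (Gr\"onwall-type) argument, using that the Carath\'eodory--Lipschitz condition on $\Phi$ lets us convert hypothesis (b) into a linear differential inequality for $w$. Since $\varphi,\psi\in AC(J)$, their images lie in a compact interval $I$; choose $L\in L^1(J)$ witnessing \eqref{eq:car-lip} on $I\times J$. Then $w\in AC(J)$, $w(\xi)\ge 0$ by (a), and from (b) together with \eqref{eq:car-lip} we obtain
\begin{align*}
w'(x)\ge \Phi(\psi(x),x)-\Phi(\varphi(x),x)\ge -L(x)|w(x)|\quad\text{a.e. in }J.
\end{align*}

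The first step is to show $w\ge 0$ on all of $J$. Suppose instead that $w(x_0)<0$ for some $x_0\in J$; since $w(\xi)\ge 0$ and $w$ is continuous, there is a largest $c_0\in [\xi,x_0)$ with $w(c_0)=0$, and $w<0$ on $(c_0,x_0]$. On this subinterval $|w|=-w$, so $w'\ge L w$ a.e. Consider the AC function $u(x)\defeq -w(x)\exp\bigl(-\int_{c_0}^x L(t)\,\ud t\bigr)$; a direct computation gives $u'\le 0$ a.e., hence $u$ is non-increasing. But $u(c_0)=0$ and $u(x_0)>0$, a contradiction. Hence $w\ge 0$ throughout $J$.

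Now that $|w|=w$ everywhere, the inequality becomes $w'\ge -Lw$ a.e., so $u(x)\defeq w(x)\exp\bigl(\int_\xi^x L(t)\,\ud t\bigr)$ is non-decreasing on $J$. If $w(\xi)>0$, then $u(\xi)>0$ forces $u>0$, hence $w>0$, on all of $J$, giving the first alternative. If $w(\xi)=0$, set
\begin{align*}
c\defeq \sup\{x\in J:\ w\equiv 0\text{ on }[\xi,x]\}.
\end{align*}
If $c=\xi+a$ we are done; otherwise $w(c)=0$ by continuity, and we claim $w>0$ on $(c,\xi+a]$. Indeed, if $w(x_1)=0$ for some $x_1\in(c,\xi+a]$, then applying the same integrating-factor argument on $[c,x_1]$ shows $\tilde u(x)\defeq w(x)\exp\bigl(\int_c^x L\bigr)$ is non-decreasing on $[c,x_1]$ with $\tilde u(c)=\tilde u(x_1)=0$, forcing $\tilde u\equiv 0$ and hence $w\equiv 0$ on $[c,x_1]$, contradicting the maximality of $c$.

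The backward statement on $J'=[\xi-a,\xi]$ follows by the standard reduction $\tilde\varphi(x)\defeq \varphi(-x)$, $\tilde\psi(x)\defeq \psi(-x)$, $\tilde\Phi(s,x)\defeq -\Phi(s,-x)$, which reverses the sign of the derivative and therefore interchanges the roles of (b) and its reverse. The main technical point to watch is that $u$ and $\tilde u$ are genuinely absolutely continuous (so that $u'\ge 0$ a.e.\ yields monotonicity), but this is routine since products and compositions of AC functions with absolutely continuous integrals of $L^1$ functions are AC on compact intervals. No nonlinear obstruction arises; the whole argument hinges on bootstrapping (b) to the linear inequality $w'\ge -L|w|$ through the Carath\'eodory--Lipschitz hypothesis.
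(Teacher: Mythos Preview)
The paper does not give its own proof of this theorem: it is stated in Appendix~\ref{sec:ODE} as a direct citation from Walter's textbook (Theorem XXI, Ch.~III \S10 in \cite{wal98}), with no argument supplied. So there is nothing to compare against on the paper's side.

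Your proof, however, is correct and self-contained. Setting $w=\psi-\varphi$, using the Carath\'eodory--Lipschitz condition to obtain $w'\ge -L|w|$, and then applying an integrating-factor/Gr\"onwall argument is exactly the standard route (and is essentially how Walter proves it). The two-step structure---first showing $w\ge 0$ by contradiction on a maximal interval of negativity, then using monotonicity of $w\exp\bigl(\int L\bigr)$ to conclude that once $w$ becomes positive it stays positive---is clean, and your remark that the auxiliary functions are genuinely AC (so that the a.e.\ sign of the derivative controls monotonicity) addresses the only point requiring care. The reduction for the backward interval via the sign change $\tilde\Phi(s,x)=-\Phi(s,-x)$ is also correct.
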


\bibliographystyle{plain}
\bibliography{abib}
\end{document}